\theoremstyle{plain}
\newtheorem{theorem}{Theorem}[section]
\newtheorem{lemma}[theorem]{Lemma}
\newtheorem{proposition}[theorem]{Proposition}
\theoremstyle{definition}
\newtheorem{definition}[theorem]{Definition}
\newtheorem{problem}[theorem]{Problem}
\theoremstyle{remark}
\newtheorem{remark}[theorem]{Remark}
\DeclareMathOperator{\Spec}{Spec}
\DeclareMathOperator{\chr}{char}
\newcommand{\C}{\mathbb{C}}
\newcommand{\R}{\mathbb{R}}
\newcommand{\Q}{\mathbb{Q}}
\newcommand{\Z}{\mathbb{Z}}
\renewcommand{\mod}{\mathrm{mod}\:}
\begin{document}
\title[Gorenstein isolated quotient singularities]{Gorenstein isolated 
       quotient \\
       singularities over $\C$}
\author{D.~A. Stepanov}
\address{The Department of Mathematical Modelling \\
         Bauman Moscow State Technical University \\
         2-ya Baumanskaya ul. 5, Moscow 105005, Russia}
\email{dstepanov@bmstu.ru}
\thanks{The research was supported by the Russian Grant for Scientific 
        Schools 1987.2008.1 and by the Russian Program for Development of
        Scientific Potential of the High School 2.1.1/227}
\date{}
\begin{abstract}
In this paper we review the classification of isolated quotient
singularities over the field of complex numbers due to H. Zassenhaus,
G. Vincent, and J.~A. Wolf. As an application we describe Gorenstein
isolated quotient singularities over $\C$, generalizing a result of 
K. Kurano and S. Nishi.
\end{abstract}
\maketitle

\section{Introduction}\label{S:intro}
Let $\Bbbk$ be a field, and $R$ a finitely generated $\Bbbk$-algebra.
Recall that $R$ is called \emph{Gorenstein} if it has a canonical module
generated by one element (see, e.~g., \cite{Eisenbud}, 21.1). 
Geometrically, the affine algebraic variety $X=\Spec R$ is Gorenstein
if the canonical divisor $K_X$ of $X$ is Cartier. Now let $\Bbbk=\C$ be the 
field of complex numbers, $S=\C[x_1,\dots,x_N]$ the polynomial ring in $N$ 
variables, $G$ a finite subgroup of $GL(N,\C)$, and $R=S^G$ the algebra of 
invariants. By Hilbert Basis Theorem $R$ is finitely generated. We say 
that $X=\Spec R$ is an \emph{isolated singularity}, if the algebraic 
variety $X$ is singular and its singular set has dimension $0$.

Our paper is devoted to the following
\begin{problem}\label{Pm:classgisqsing}
Classify all Gorenstein isolated quotient singularities of the form
$\C^N/G$, where $G$ is a finite subgroup of $GL(N,\C)$, up to an 
isomorphism of algebraic varieties.
\end{problem}

Recently K. Kurano and S. Nishi obtained the following result.
\begin{theorem}[\cite{KN}, Corollary~1.3]\label{T:KN}
Let $G$ be a finite subgroup of $GL(p,\C)$, where $p$ is an odd prime,
and assume that $\C^p/G$ is an isolated Gorenstein singularity. Then 
$\C^p/G$ is isomorphic to a cyclic quotient singularity, i.~e., to
a singularity $\C^p/H$, where $H<GL(p,\C)$ is a cyclic group.
\end{theorem}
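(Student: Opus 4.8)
The plan is to reduce the assertion to a statement about a subgroup of $SL(p,\C)$ acting freely away from the origin, and then to combine the classification recalled above with the fact that $p$ is an \emph{odd} prime. First I would pass to a small model. By Prill's theorem every quotient singularity $\C^p/G$ is isomorphic to $\C^p/G'$ for a subgroup $G'<GL(p,\C)$ containing no pseudo-reflections, unique up to conjugacy; since $G'$ is small, $\C^p/G'$ is singular exactly along the image of the locus with non-trivial stabiliser, so the hypothesis that the singularity be isolated translates into the statement that $G'$ acts freely on $\C^p\setminus\{0\}$, i.e. that $\C^p$ is a fixed-point-free $G'$-module (no $g\neq 1$ has eigenvalue $1$). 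By K.-i.~Watanabe's criterion for Gorenstein invariant rings, $\C^p/G'$ is Gorenstein if and only if $G'\subseteq SL(p,\C)$. So it suffices to prove: a finite subgroup $G'\subseteq SL(p,\C)$ acting freely on $\C^p\setminus\{0\}$ is cyclic.

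Since $G'$ admits a fixed-point-free representation, the classification tells us that every Sylow subgroup of $G'$ is cyclic or generalized quaternion. Here is where $p$ being odd enters, through a parity argument: if the Sylow $2$-subgroup of $G'$ were generalized quaternion it would contain a copy of $Q_8$, and every fixed-point-free representation of $Q_8$ is a direct sum of copies of its unique $2$-dimensional irreducible (the linear characters of $Q_8$ are not even faithful), so restricting $\C^p$ to this $Q_8$ would force $p=\dim_\C\C^p$ to be even -- a contradiction. Hence all Sylow subgroups of $G'$ are cyclic, so by the classical structure theorem for such groups $G'\cong\Z/m\rtimes\Z/n$ with $\gcd(m,n)=1$; write $\langle A\rangle\cong\Z/m$ for the normal factor, $\langle B\rangle\cong\Z/n$ for a complement, and let $d$ be the order of the automorphism of $\langle A\rangle$ induced by $B$.

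Now decompose $\C^p$ into $G'$-irreducibles. Each constituent is again fixed-point-free, hence faithful, hence its restriction to $\langle A\rangle$ consists of faithful characters; Clifford theory applied to the normal subgroup $\langle A\rangle$ then shows that every such constituent has dimension exactly $d$. Since the dimensions sum to the prime $p$, either $d=1$, in which case $G'$ is abelian and therefore (being fixed-point-free) cyclic, and we are done; or $d=p$. In the latter case $\C^p$ is a single irreducible $\mathrm{Ind}_H^{G'}\psi$, where $H=\langle A\rangle\times C$ with $C=\langle B^p\rangle$ the (central) kernel of the action of $\langle B\rangle$ on $\langle A\rangle$, the index $[G':H]$ equals $p$, and $\psi$ is a linear character faithful on $\langle A\rangle$; freeness forces $\psi$ to be faithful on the central cyclic group $C$, so $\rho(B^p)=\psi(B^p)\,\mathrm{Id}$ with $\psi(B^p)$ a primitive $|C|$-th root of unity. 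Taking determinants in $\rho(B^p)\in SL(p,\C)$ gives $\psi(B^p)^p=1$, hence $|C|\mid p$, i.e. $|C|\in\{1,p\}$.

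It remains to eliminate these two subcases, which is the crux. If $|C|=1$ then $n=p$, so $B^p=B^n=1$; by Mackey's formula $\mathrm{Res}_{\langle B\rangle}\C^p$ is the regular representation of $\Z/p$, which contains the trivial character, so $B$ has eigenvalue $1$ -- contradicting freeness. If $|C|=p$ then $n=p^2$ and $\psi(B^p)$ is a primitive $p$-th root of unity; writing $\rho(B)$ in the basis $\{B^i\otimes w\}_{i=0}^{p-1}$ coming from the induced structure exhibits it as the permutation matrix of a $p$-cycle with one entry replaced by $\psi(B^p)$, whence $\det\rho(B)=(-1)^{p-1}\psi(B^p)=\psi(B^p)\neq 1$ ($p$ odd), contradicting $\rho(B)\in SL(p,\C)$. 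Both subcases being impossible, $d=p$ cannot occur, so $G'$ is cyclic and $\C^p/G\cong\C^p/G'$ is a cyclic quotient singularity. I expect the genuinely delicate point to be the reductions of the first paragraph -- making precise that the Gorenstein and isolated-singularity hypotheses really do pass to the small model $G'$, becoming respectively $G'\subseteq SL(p,\C)$ and freeness off the origin -- while the group-theoretic core above is short once the classification is in hand.
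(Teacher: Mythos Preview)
Your proof is correct and follows the same overall strategy as the paper's: pass to a small subgroup of $SL(p,\C)$ acting freely off the origin, use oddness of $p$ to exclude all but the metacyclic (type~I) case, and then rule out $d=p$ by a determinant argument. The difference is purely one of packaging. The paper's proof is five lines because it simply invokes the classification already assembled: Theorem~\ref{T:classisqsing} places $G$ among types I--VI with irreducibles of dimension $d$, $2d$, or $4d$ (so $p$ odd forces type~I), and then Table~\ref{Tb:reprs}, row~I, says a type-I irreducible lies in $SL$ only when $d=2^s$, killing $d=p$. You instead reprove the relevant fragments directly: your $Q_8$ parity argument is exactly Remark~\ref{R:qgroup}; your Clifford-theory step recovers the dimension computation for type-I irreducibles in the List; and your elimination of the two subcases $|C|\in\{1,p\}$ amounts to the determinant formula $\det\pi_{k,l}(B)=(-1)^{d-1}e^{2\pi il/n'}$ together with the extra $pq$-consequence ``every prime divisor of $d$ divides $n/d$'' that the paper has already baked into Theorem~\ref{T:solvgrps}. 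Your version is more self-contained and would stand without the full classification; the paper's is terser because that work has been front-loaded. Incidentally, the reductions in your first paragraph (Prill, Watanabe, Lemma~\ref{L:eigen1}) are treated as routine in the paper and are not where the difficulty lies.
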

Kurano and Nishi give a nice direct proof of their theorem. On the other
hand, there exists a complete classification of isolated quotient
singularities over $\C$, and it is natural to try to derive results of
this sort and even a complete description of Gorenstein isolated
quotient singularities over $\C$ from this classification.

Isolated quotient singularities (IQS in the sequel) over $\C$ were 
classified mainly in works of H. Zassenhaus \cite{Zassenhaus} (1935),
G. Vincent \cite{Vincent} (1947), and J.~A. Wolf \cite{Wolf} (1972). 
These authors were motivated neither by singularity theory nor by
algebraic geometry. In his pioneer work Zassenhaus studied nearfields,
whereas Vincent and Wolf classified compact Riemannian manifolds of
constant positive curvature. The final results were obtained by Wolf
and stated in \cite{Wolf} as a classification of such manifolds. The
fact that Zassenhaus-Vincent-Wolf classification gives also a 
classification of IQS has already been known for a long time (see, e.~g.,
\cite{Popov}), but it seems that this topic has not attracted much of
the attention of algebraic geometers yet.

An element $g\in GL(N,\C)$ is called a \emph{quasireflection}, if $g$ fixes
pointwise a codimension $1$ linear subspace of $\C^N$. Gorenstein IQS over 
$\C$ are characterized by the following theorem of K.-I. Watanabe.
\begin{theorem}[\cite{Watanabe}]\label{T:Watanabe}
Let $G$ be a finite subgroup of $GL(N,\C)$ free from quasireflections.
Then $\C^N/G$ is Gorenstein if and only if $G$ is contained in $SL(N,\C)$.
\end{theorem}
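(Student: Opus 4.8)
The plan is to compute the canonical module of $R=S^{G}$ explicitly and read off the Gorenstein condition. Set $S=\C[x_1,\dots,x_N]$. First I would record the standing facts: by E.~Noether's finiteness theorem $S$ is module-finite over $R$, and since $\chr\C=0$ the Reynolds operator $\tfrac1{|G|}\sum_{g\in G}g\colon S\to R$ is an $R$-linear retraction, so $R$ is a direct summand of the Cohen--Macaulay ring $S$; hence $R$ is a normal Cohen--Macaulay graded domain with $R_0=\C$. For such a ring the paper's definition of Gorenstein (cyclic canonical module) is equivalent to $\omega_R\cong R$, i.e.\ to $\omega_R$ being a free $R$-module of rank one. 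So the theorem reduces to the claim that $\omega_R$ is free of rank one if and only if $G\subseteq SL(N,\C)$.

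Second, I would identify $\omega_R$ with a module of relative semi-invariants. The hypothesis that $G$ contains no quasireflections says exactly that $\mathrm{Fix}(g)$ has codimension $\geq 2$ in $\C^N$ for every $g\neq 1$; hence $G$ acts freely on an open set $U\subseteq\C^N$ whose complement has codimension $\geq 2$, and the quotient $\pi\colon\C^N\to X=\Spec R$ is \'etale in codimension one. Over $V:=\pi(U)\subseteq X$, whose complement again has codimension $\geq 2$ (as $\pi$ is finite), $\pi$ is a $G$-torsor and $V$ is smooth. Now $\omega_{\C^N}$ is freely generated by $dx_1\wedge\dots\wedge dx_N$, on which $G$ acts through a character $\chi$ which is $\det$ or $\det^{-1}$ according to the convention for the $G$-action on $S$; in either case $\chi\equiv 1\iff G\subseteq SL(N,\C)$. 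Pulling back to $U$ and taking $G$-invariants identifies the canonical sheaf of $V$ with $\widetilde{M}\,|_V$, where $M=\{f\in S:\ g(f)=\chi(g)^{-1}f\ \text{for all }g\in G\}$ is the $R$-module of semi-invariants of weight $\chi^{-1}$. Since $\omega_R$ and $M$ are both reflexive of rank one ($M$ is a summand of the reflexive $R$-module $S$ via the projector $\tfrac1{|G|}\sum_g\chi(g)g$) and their associated sheaves agree on $V$, and $R$ satisfies Serre's condition $(S_2)$ with $X\setminus V$ of codimension $\geq 2$, I conclude $\omega_R\cong M$. (Alternatively one may quote Watanabe's direct computation of this canonical module in \cite{Watanabe}.)

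The easy direction follows at once: if $G\subseteq SL(N,\C)$ then $\chi\equiv 1$, so $M=S^{G}=R$ is free and $R$ is Gorenstein. For the converse, suppose $\omega_R\cong M$ is free, say $M=Rf_0$ with $f_0\in S$ a homogeneous $\chi^{-1}$-semi-invariant. For a character $\psi$ of $G$ put $M_\psi=\{f\in S:g(f)=\psi(g)f\ \text{for all }g\}$; one has $M_\psi M_{\psi'}\subseteq M_{\psi\psi'}$, and --- this is where freedom from quasireflections re-enters --- because $\pi$ is \'etale in codimension one these two modules have the same reflexive hull (localizing at a height-one prime of $R$, the extension $R\subseteq S$ is an unramified $G$-cover of a discrete valuation ring, where the statement is clear). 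Iterating, if $m$ is the order of $\chi$, the $m$-th reflexive power of $M=M_{\chi^{-1}}$ equals $M_{\chi^{-m}}=M_{1}=S^{G}=R$; but it also equals $Rf_0^{\,m}$ since $M=Rf_0$ is free. Hence $f_0^{\,m}\in R^{\times}=\C^{\times}$, so $f_0$ is a nonzero constant (as $S$ is a domain), $M=R$, and $1\in M$ forces $\chi(g)^{-1}=1$ for all $g$, i.e.\ $G\subseteq SL(N,\C)$.

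I expect the main obstacle to be the identification $\omega_R\cong M$, where one must pin down the $G$-equivariant structure of $\omega_{\C^N}$ and exploit \'etaleness in codimension one to descend correctly, together with the reflexive-hull equality $(M_\psi M_{\psi'})^{**}=M_{\psi\psi'}$ used in the converse (this already fails for $G=\{\pm 1\}\subset GL(1,\C)$, where the left side is the proper ideal $(x^2)$ of $R=\C[x^2]$ while the right side is $R$ itself). Both steps genuinely require the absence of quasireflections; everything else is formal, and the whole argument can be abbreviated by quoting Watanabe's original computation of the canonical module of an invariant ring.
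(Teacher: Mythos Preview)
Your proposal is essentially correct and follows the line of Watanabe's original argument: identify $\omega_R$ with the module of $\chi^{-1}$-semi-invariants (using that $\pi$ is \'etale in codimension one, which is exactly the no-quasireflection hypothesis), then read off the Gorenstein condition. The one place to be a bit more careful is the converse step: the equality $(M^m)^{**}=R$ must be understood as an equality of fractional ideals (submodules of the fraction field), not merely an abstract isomorphism of $R$-modules, since $Rf_0^{\,m}\cong R$ abstractly for any nonzero $f_0$ in a domain; your argument does use it this way, but it is worth saying so explicitly.

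As for comparison with the paper: there is nothing to compare. The paper does not prove this theorem; it is quoted with attribution to \cite{Watanabe} and used as a black box. Your write-up is thus more than the paper provides, and in fact the student even acknowledges this (``one may quote Watanabe's direct computation''). If anything, you could shorten the converse by invoking the standard fact that, in the absence of quasireflections, the map $\psi\mapsto[M_\psi]$ from the character group of $G$ to $\mathrm{Cl}(R)$ is an injective group homomorphism; then $[M_{\chi^{-1}}]=0$ forces $\chi=1$ immediately, without tracking the explicit generator $f_0$.
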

It follows that in order to get a classification of Gorenstein IQS one
has to look through Zassenhaus-Vincent-Wolf groups giving IQS and check
which of them are contained in the special linear group. We perform this
program in our paper and present the results in 
Theorem~\ref{T:classgiqsing}.

Chapters 5, 6, and 7 of Wolf's book \cite{Wolf} are our main reference.
The presentation there is so clear, detailed and self-contained that it
is hard to imagine it to be improved. But the classification is done there
from the point of view of the Riemannian geometry. Also, anyway we have
to introduce a lot of notation and this requires a considerable place.
Therefore we decided to include to our work a review of 
Zassenhaus-Vincent-Wolf classification written from the point of view of
the theory of singularities.

Our paper is organized as follows. In Section~\ref{S:prelim} we list
some preliminaries, observations and general results on IQS and their
classification. In particular, we explain the connection of IQS with
classification of compact Riemannian manifolds of constant positive
curvature. In Section~\ref{S:class} we present the classification
of isolated quotient singularities over $\C$ as it is given in \cite{Wolf}.
The results are summarized in Theorem~\ref{T:classisqsing}. The only new 
thing we add is the computation of determinants of irreducible 
representations which we use later in description of Gorenstein
singularities. We also discuss which part of the classification extends
more or less obviously to other fields. In Section~\ref{S:Gorenstein}
we show how the result of Kurano and Nishi can be deduced and generalized
using the classification. As an example we give a description of Gorenstein
isolated quotient singularities over $\C$ in dimensions $N\leq 7$.

We hope that the beautiful classification of Zassenhaus, Vincent, and Wolf
will be completed over as many different fields $\Bbbk$ as possible and 
will find other interesting applications in algebraic geometry.

This paper originates from the talk which was given by the author at
the workshop ``Quotient singularities'' organized by Ivan Cheltsov in June
2010 at the University of Edinburgh. We thank the organizer and all the
participants of the workshop for warm and stimulating atmosphere.

\section{Preliminaries}\label{S:prelim}
The results of this section are either well known or easy, so even if we
state some of them without reference, there is no claim of originality.

\subsection{Generalities on isolated quotient singularities. Groups without 
            fixed points}
We start with formulating general problems on classification of IQS. 
Let $V$ be an algebraic variety defined over a field $\Bbbk$. Let $G$ be a 
finite group acting on $V$ by automorphisms and $P\in V$ a closed 
nonsingular fixed point of this action. Denote by $\pi\colon V\to V/G$ the 
canonical projection to the quotient variety and let $Q=\pi(P)$. Assume 
that $Q$ is an isolated singular point of $V/G$.
\begin{problem}\label{Pm:classisqsing}
Classify all singularities $Q\in V/G$ of the form described above up to
a formal or, when $\Bbbk=\C$, up to an analytic equivalence.
\end{problem}

At a nonsingular point $P$ any algebraic variety is formally equivalent
to $0\in \Bbbk^N$, $N=\dim V$, i.~e., we have an isomorphism of complete
local rings $\widehat{\mathcal{O}_{P,V}}\simeq
\widehat{\mathcal{O}_{0,\Bbbk^N}}$ (\cite{AMD}, Remark~2 after 
Proposition~11.24). With the additional condition that characteristic
of $\Bbbk$ does not divide order of $G$, the action of $G$ can be formally
linearized at $P$ (see Lemma~\ref{L:linearization} below). In the analytic
case $\Bbbk=\C$, $V$ is a manifold at $P$ and the action of $G$ can be
linearized in some local analytic coordinates at $P$, see, e.~g.,
\cite{Akhiezer}, p. 35. Thus we see that if $\chr\Bbbk \nmid |G|$,
Problem~\ref{Pm:classisqsing} is equivalent to the following
\begin{problem}\label{Pm:classaffquotients}
Classify all isolated quotient singularities of the form $\Bbbk^N/G$, 
where $G$ is a finite subgroup of $GL(N,\Bbbk)$.
\end{problem}

Let us prove a lemma on formal linearization. 
\begin{lemma}\label{L:linearization}
Let $R=\Bbbk[[x_1,\dots,x_N]]$ be the local ring of formal power series in
$N$ variables over a field $\Bbbk$, and $G$ a finite group acting on $R$ by
local automorphisms. Assume that characteristic of the field $\Bbbk$ does 
not divide order of $G$. Then one can choose new local parameters $y_1,
\dots,y_N$ in $R$ such that $G$ acts by linear substitutions in $y_1,
\dots,y_N$.
\end{lemma}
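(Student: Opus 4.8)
The plan is to linearize the $G$-action by the classical averaging (Reynolds operator) trick. First I would observe that $G$ acts on the cotangent space $\mathfrak{m}/\mathfrak{m}^2$, where $\mathfrak{m}=(x_1,\dots,x_N)$ is the maximal ideal of $R$; since each $\sigma\in G$ is a local automorphism it preserves $\mathfrak{m}$ and hence induces a linear action on this $N$-dimensional $\Bbbk$-vector space. Choose any $\Bbbk$-basis $\bar{y}_1,\dots,\bar{y}_N$ of $\mathfrak{m}/\mathfrak{m}^2$ and lift it arbitrarily to elements $y_1^{(0)},\dots,y_N^{(0)}\in\mathfrak{m}$; these form a regular system of parameters, so $R=\Bbbk[[y_1^{(0)},\dots,y_N^{(0)}]]$. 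The idea is to correct this lift so that the $G$-action becomes exactly linear, not merely linear modulo $\mathfrak{m}^2$.

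Next I would define the averaged parameters. For each $i$ set
\[
y_i=\frac{1}{|G|}\sum_{\sigma\in G}\rho(\sigma)^{-1}\bigl(\sigma(y_i^{(0)})\bigr),
\]
where $\rho(\sigma)\in GL(N,\Bbbk)$ is the matrix of $\sigma$ on $\mathfrak{m}/\mathfrak{m}^2$ in the chosen basis, acting coordinatewise on the tuple $(y_1^{(0)},\dots,y_N^{(0)})$; this makes sense because $\chr\Bbbk\nmid|G|$, so $|G|$ is invertible in $\Bbbk$. A direct computation shows that for any $\tau\in G$ one has $\tau(y_i)=\sum_j (\rho(\tau))_{ji}\,y_j$ by the standard re-indexing of the sum over $G$, i.e. $G$ acts linearly on the tuple $(y_1,\dots,y_N)$ via $\rho$. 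It remains to check that $y_1,\dots,y_N$ is again a regular system of parameters for $R$: since $\rho(\sigma)$ acts trivially modulo $\mathfrak{m}$ in the sense that $\sigma(y_i^{(0)})\equiv y_i^{(0)} \pmod{\mathfrak{m}^2}$ is \emph{not} true in general, one instead notes that $\sigma(y_i^{(0)})\equiv\sum_j(\rho(\sigma))_{ji}y_j^{(0)}\pmod{\mathfrak{m}^2}$, whence $\rho(\sigma)^{-1}$ applied to $\sigma(y_i^{(0)})$ is $\equiv y_i^{(0)}\pmod{\mathfrak{m}^2}$, and therefore $y_i\equiv y_i^{(0)}\pmod{\mathfrak{m}^2}$. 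Thus the images of $y_1,\dots,y_N$ still form a basis of $\mathfrak{m}/\mathfrak{m}^2$, so by the formal inverse function theorem (or completeness of $R$ and Nakayama) they generate $\mathfrak{m}$ and $R=\Bbbk[[y_1,\dots,y_N]]$.

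The one genuinely delicate point — the main obstacle — is making rigorous that a tuple in $\mathfrak{m}$ whose residues span $\mathfrak{m}/\mathfrak{m}^2$ really is a formal coordinate system, i.e. that the substitution $x_i\mapsto y_i$ defines an automorphism of $\Bbbk[[x_1,\dots,x_N]]$. This is a standard fact about complete local rings (the completed version of the Jacobian criterion: a self-map of $\Bbbk[[x_1,\dots,x_N]]$ fixing $\mathfrak{m}$ and inducing an isomorphism on $\mathfrak{m}/\mathfrak{m}^2$ is an isomorphism), and I would either cite it or prove it quickly by constructing the inverse term-by-term by successive approximation, using $\mathfrak{m}$-adic completeness. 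Everything else — the invertibility of $|G|$, the cocycle re-indexing that yields linearity, and the congruence $y_i\equiv y_i^{(0)}\pmod{\mathfrak{m}^2}$ — is routine, so the proof reduces to this single structural input about power series rings.
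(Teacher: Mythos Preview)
Your argument is correct and is the classical Cartan--Bochner linearization via the Reynolds operator: average the given coordinates against the linear action on $\mathfrak{m}/\mathfrak{m}^2$ to force exact equivariance, then invoke the formal inverse function theorem to see that the averaged tuple is still a system of parameters. (The one place to be careful is the bookkeeping in ``$\rho(\sigma)^{-1}$ acting coordinatewise'': depending on whether you write the action on $\mathfrak{m}/\mathfrak{m}^2$ via $\rho$ or $\rho^T$, the matrix you need is $(\rho(\sigma)^T)^{-1}$ rather than $\rho(\sigma)^{-1}$, but this is purely a convention issue and your re-indexing computation goes through once it is fixed.)

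The paper takes a genuinely different, though closely related, route. Instead of a single global averaging, it builds the new parameters as an inverse limit: starting from a basis of $\mathfrak{m}/\mathfrak{m}^2$, it uses Maschke's theorem at each finite stage to choose a $G$-invariant complement $V$ to $\mathfrak{m}^{n+1}/\mathfrak{m}^{n+2}$ inside $\mathfrak{m}/\mathfrak{m}^{n+2}$, lifts the coordinates uniquely into $V$, and passes to the $\mathfrak{m}$-adic limit. Both arguments rest on the same hypothesis $\chr\Bbbk\nmid|G|$ for the same underlying reason (Maschke is itself proved by averaging), so the distinction is organizational rather than conceptual. Your approach is shorter and more direct, and it isolates cleanly the one nontrivial input (the formal Jacobian criterion); the paper's approach trades that citation for an explicit tower of finite-dimensional equivariant splittings, which makes transparent what happens modulo each $\mathfrak{m}^{n+1}$ but is a bit heavier to write out.
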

\begin{proof}
Recall that $R=\Bbbk[[x_1,\dots,x_N]]$ can be identified with the inverse 
limit $\varprojlim R/\mathfrak{m}^n$, where $\mathfrak{m}$ is the maximal
ideal $(x_1,\dots,x_N)$. Since $G$ acts by local automorphisms, it
preserves all the ideals $\mathfrak{m}$, $\mathfrak{m}^2,\dots,
\mathfrak{m}^n,\dots$, and therefore acts linearly on all quotients
$\mathfrak{m}/\mathfrak{m}^n$, $n\geq 1$, which in turn are finite 
dimensional vector spaces over $\Bbbk$. Moreover, if $\pi_{nn'}$, 
$n\geq n'$, denotes the natural truncation map $\mathfrak{m}/
\mathfrak{m}^{n+1}\to\mathfrak{m}/\mathfrak{m}^{n'+1}$, the action of $G$ 
is compatible with $\pi_{nn'}$. We shall construct $y_1,\dots,y_N$ as 
limits of Cauchy sequences $y_{1}^{(n)},\dots$, $y_{N}^{(n)}$, where 
$\forall i=1,\dots,N$, $y_{i}^{(n)}\in \mathfrak{m}/\mathfrak{m}^{n+1}$ and 
$\pi_{nn'}(y_{i}^{(n)})=y_{i}^{(n')}$.

Let $y_{1}^{(1)},\dots,y_{N}^{(1)}$ be any basis of $\mathfrak{m}/
\mathfrak{m}^2$. Note that $G$ acts on $y_{1}^{(1)},\dots,y_{N}^{(1)}$ 
by linear substitutions. Now suppose that $y_{1}^{(n)},\dots,y_{N}^{(n)}$
have already been constructed. Due to our assumption on characteristic of 
$\Bbbk$, the representation of $G$ on $\mathfrak{m}/\mathfrak{m}^{n+2}$ is
completely reducible. Thus we have a decomposition $\mathfrak{m}/
\mathfrak{m}^{n+2}\simeq \mathfrak{m}^{n+1}/\mathfrak{m}^{n+2}\bigoplus V$,
where $V$ projects isomorphically onto $\mathfrak{m}/\mathfrak{m}^{n+1}$
under the map $\pi_{n+1,n}$. Now we can uniquely lift 
$y_{1}^{(n)},\dots,y_{N}^{(n)}$ to some elements 
$y_{1}^{(n+1)},\dots,y_{N}^{(n+1)}$ of $\mathfrak{m}/\mathfrak{m}^{n+2}$.
By construction $G$ acts on $y_{1}^{(n+1)},\dots,y_{N}^{(n+1)}$ by linear
substitutions, hence the same holds for the limits $y_1,\dots,y_N$
of the sequences $y_{1}^{(n)},\dots,y_{N}^{(n)}$.
\end{proof}

From now on, if not stated otherwise, we work only with the case when the 
field $\Bbbk$ is algebraically closed and $\chr\Bbbk\nmid |G|$. Let us 
consider Problem~\ref{Pm:classaffquotients}. Recall that an element 
$g\in GL(N,\Bbbk)$ is called a \emph{quasireflection}, if $g$ fixes 
pointwise a codimension $1$ subspace of $\Bbbk^N$. By 
Chevalley-Shephard-Todd Theorem (\cite{Benson}, Theorem~7.2.1), a quotient 
$\Bbbk^N/H$, where $H<GL(N,\Bbbk)$ is finite and $\chr\Bbbk\nmid |H|$, is 
smooth if and only if the group $H$ is generated by quasireflections. 
Quasireflections contained in a finite group $G<GL(N,\Bbbk)$ generate a 
normal subgroup $H\vartriangleleft G$. It follows that from the point of 
view of Problem~\ref{Pm:classaffquotients} we can restrict ourselves to
groups $G<GK(N,\Bbbk)$ free from quasireflections. We require also the
singularity $\Bbbk^N/G$ to be isolated. This imposes additional strong
restrictions on the group $G$.
\begin{lemma}[(cf. \cite{KN}, p. 2, 3)]\label{L:eigen1}
Let $G$ be a finite subgroup of $GL(N,\Bbbk)$. Assume that $G$ is free from
quasireflections and $\chr \Bbbk\nmid |G|$. Then $\Bbbk^N/G$ has isolated 
singularities if and only if $1$ is not an eigenvalue of any element of
$G$ except the unity.
\end{lemma}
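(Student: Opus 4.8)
The plan is to analyze the fixed-point structure of elements of $G$ acting on $\Bbbk^N$ and translate this into a statement about the singular locus of the quotient. First I would recall the standard description of the singular locus of a quotient: since $G$ is free from quasireflections, the branch locus of $\pi\colon \Bbbk^N\to \Bbbk^N/G$ has codimension at least $2$, and the singular locus of $\Bbbk^N/G$ is precisely the image of the set $Z=\{v\in\Bbbk^N : gv=v \text{ for some } g\in G,\ g\neq 1\}$ of points with nontrivial stabilizer. (This uses that the quotient of $\Bbbk^N$ by the stabilizer of a point $v$ can be smooth only if that stabilizer is generated by quasireflections, hence trivial by hypothesis.) So $\Bbbk^N/G$ has isolated singularities if and only if $Z$ is finite.

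Next I would observe that $Z=\bigcup_{g\in G,\, g\neq 1}\operatorname{Fix}(g)$, a finite union of linear subspaces, where $\operatorname{Fix}(g)=\ker(g-\mathrm{id})$ is the eigenspace of $g$ for the eigenvalue $1$. Since $\chr\Bbbk\nmid|G|$, each $g$ is diagonalizable, so $\dim\operatorname{Fix}(g)$ equals the multiplicity of $1$ as an eigenvalue of $g$. A finite union of linear subspaces is finite (indeed, equals $\{0\}$) if and only if each of those subspaces is $\{0\}$. Hence $Z$ is finite $\iff$ $\operatorname{Fix}(g)=\{0\}$ for every $g\neq 1$ $\iff$ $1$ is not an eigenvalue of any $g\in G\setminus\{1\}$. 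Combining with the previous paragraph gives exactly the claimed equivalence. For the direction that a nontrivial fixed eigenvalue produces a positive-dimensional singular locus, I would note that $\operatorname{Fix}(g)$ maps under $\pi$ to a positive-dimensional subvariety of the singular locus, because $\pi$ is finite and surjective onto its image (so preserves dimension) and, as noted, the generic point of $\operatorname{Fix}(g)$ really is singular in the quotient since its stabilizer is nontrivial and quasireflection-free.

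The main obstacle I anticipate is making the identification "singular locus of $\Bbbk^N/G$ $=$ image of the non-free locus $Z$" fully rigorous. The inclusion that points outside $\pi(Z)$ are smooth in the quotient is the nontrivial half: for $v\notin Z$ the action of $G$ near $v$ is free, so $\pi$ is étale there and $\Bbbk^N/G$ is smooth at $\pi(v)$. The inclusion that points of $\pi(Z)$ are singular needs the quasireflection-free hypothesis together with Chevalley--Shephard--Todd applied to the stabilizer subgroup $G_v$: since $G_v\neq 1$ is not generated by quasireflections (it contains none), the local quotient $\Bbbk^N/G_v$ is singular at the image of $v$, and this local quotient is formally/analytically the model for $\Bbbk^N/G$ near $\pi(v)$. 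Once this dictionary is in place, the rest is the elementary linear-algebra observation about finite unions of linear subspaces, so I would keep that part brief.
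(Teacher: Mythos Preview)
Your proposal is correct and rests on the same core ingredient as the paper's proof, namely Chevalley--Shephard--Todd applied to stabilizer subgroups, but the organization differs slightly. You identify the singular locus of $\Bbbk^N/G$ directly as $\pi(Z)$, applying CST to the (quasireflection-free, hence nontrivial implies non-smooth) stabilizer $G_v$ at each $v\in Z$, and then reduce to the linear-algebra observation that $Z=\bigcup_{g\ne 1}\ker(g-\mathrm{id})$ is finite iff every summand is $\{0\}$. The paper instead proves the necessity direction by contradiction: it chooses a \emph{maximal} linear subspace $U$ with nontrivial pointwise stabilizer $H$, splits $\Bbbk^N=U\oplus U'$ as an $H$-module, notes that at a generic point of $\overline{U}$ the quotient is formally $U\times(U'/H)$, and then applies CST to $H$ acting on $U'$ to force $G$ to contain a quasireflection. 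Your route is somewhat more direct because it bypasses the maximality step (which the paper uses to pin down the local stabilizer as exactly $H$); the paper's route, in return, makes the local product structure along the fixed locus explicit. The content is the same in either packaging.
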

\begin{proof}
Sufficiency of the condition given in the lemma is clear, so let us prove
necessity. Suppose on the contrary that $G$ has some non-unit element
with eigenvalue $1$. Let $\mathcal{U}$ be the set of linear subspaces
of $\Bbbk^N$ with nontrivial stabilizers in $G$, and let $U$ be a maximal
element in $\mathcal{U}$ with respect to inclusion. Denote by $H$ the
stabilizer of $U$. If $U$ is positive dimensional, the given representation
of $H$ is reducible, so let us consider the splitting 
$\Bbbk^N=U\bigoplus U'$, where $U'$ is the complementary invariant subspace 
of $H$. Let $\overline{U}$ be the image of $U$ under the canonical 
projection $\pi\colon \Bbbk^N\to \Bbbk^N/G$. At a general point $P$ of 
$\overline{U}$ the quotient $P\in \Bbbk^N/H$ is formally isomorphic to the 
direct product $(0,\overline{0})\in U\times (U'/H)$. On the other hand, 
$\Bbbk^N/G$ has to be nonsingular at $P$, thus, by Shephard-Todd Theorem, 
the group $H$ acting on $U'$ is generated by quasireflections. But then 
also $H$ acting on $\Bbbk^N$ and thus $G$ contain quasireflections, which 
contradicts the conditions of the lemma.
\end{proof}
\begin{definition}\label{D:grpswithoutfps}
Following \cite{Wolf}, we call a group $G$ satisfying the conditions
of Lemma~\ref{L:eigen1} (i.~e. $1$ is not an eigenvalue of any element
of $G$ except the unity) a \emph{group without fixed points}.
\end{definition}

\subsection{Clifford-Klein Problem. $pq$-conditions}
To introduce the context in which a solution for 
Problem~\ref{Pm:classaffquotients} was first obtained, set $\Bbbk=\C$. It 
is a standard fact of the representation theory that any finite subgroup
of $GL(N,\C)$ is conjugate to a subgroup of the unitary group $U(N)$. On
the other hand conjugate groups $G$ and $G'$ obviously give isomorphic
quotient singularities $\C^N/G$ and $\C^N/G'$. Thus we may assume from
the beginning that $G$ is a subgroup without fixed points of $U(N)$. If
we equip $\C^N$ with the standard Hermitian product and the corresponding 
metric, we see that $G$ acts by isometries on the unit sphere $S^{2N-1}$
of $\C^N$. Moreover, $G$ acts on $S^{2N-1}$ without fixed points in the 
usual sense, which justifies Definition~\ref{D:grpswithoutfps}. Further,
the quotient $S^{2N-1}/G$ with the induced metric is a compact Riemannian
manifold of constant positive curvature. We immediately get
\begin{proposition}
Let $P\in X$ be an isolated quotient singularity over the field $\C$. Then
the link of $P$ in $X$ can be equipped with a Riemannian metric of constant
positive curvature.
\end{proposition}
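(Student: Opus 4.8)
The plan is to make precise the reduction already sketched in the surrounding text and then read off the statement about links. First I would recall that over $\C$ an isolated quotient singularity $P\in X$ is, by Problem~\ref{Pm:classaffquotients} and the discussion preceding it, analytically isomorphic to a germ $(0,\C^N/G)$ with $G<GL(N,\C)$ finite and, without loss of generality, free from quasireflections and a group without fixed points in the sense of Definition~\ref{D:grpswithoutfps}. Since every finite subgroup of $GL(N,\C)$ is conjugate into the unitary group $U(N)$, and conjugate groups give the same quotient germ, I may assume $G<U(N)$.

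Next I would identify the link. Equip $\C^N$ with its standard Hermitian metric; then $G$ acts on $\C^N$ by isometries preserving the unit sphere $S^{2N-1}$, and because $G$ is a group without fixed points, the restricted action of $G$ on $S^{2N-1}$ is free (no nonidentity element of $G$ fixes a point of the sphere, since a fixed point on the sphere would be an eigenvector with eigenvalue $1$). A free action of a finite group by isometries on $S^{2N-1}$ is automatically properly discontinuous, so the quotient $S^{2N-1}/G$ is a smooth manifold and the induced quotient metric is a Riemannian metric of constant curvature $+1$. It remains to argue that $S^{2N-1}/G$ is indeed the link of $P$ in $X$: the cone structure on $\C^N$ (radial scaling) commutes with the $G$-action, so it descends to $\C^N/G$ and exhibits a punctured neighbourhood of $0$ as diffeomorphic to $(0,1)\times(S^{2N-1}/G)$; hence the boundary of a small ball around $P$, i.e. the link, is diffeomorphic to $S^{2N-1}/G$, which carries the metric just described.

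The main obstacle, such as it is, is bookkeeping rather than depth: one must be careful that the various equivalences (formal versus analytic local isomorphism of the germ, and then the identification of the topological link) are compatible, and that ``free from quasireflections'' may be assumed without changing the germ — this uses the Chevalley--Shephard--Todd reduction recalled earlier, namely that the quasireflections in $G$ generate a normal subgroup $H$ with $\C^N/H$ smooth, so that $\C^N/G\cong(\C^N/H)/(G/H)$ and we may replace $G$ by $G/H$ acting on $\C^N\cong\C^N/H$. Once these reductions are in place, the freeness of the $G$-action on $S^{2N-1}$ is immediate from the group-without-fixed-points hypothesis, and constant positive curvature is inherited from the round sphere because $G$ acts by isometries. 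I would therefore present the proof essentially as the three bullet points above, spending most of the ink on the reduction to $G<U(N)$ without fixed points and on the cone-structure identification of the link.
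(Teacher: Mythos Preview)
Your argument is correct and follows exactly the route the paper takes: the proposition is stated there as an immediate consequence of the preceding paragraph, which conjugates $G$ into $U(N)$, observes that a group without fixed points acts freely on $S^{2N-1}$ by isometries, and concludes that $S^{2N-1}/G$ inherits constant positive curvature. You have simply made explicit the bookkeeping (the Chevalley--Shephard--Todd reduction and the cone-structure identification of the link) that the paper leaves implicit.
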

Classification of Riemannian manifolds of constant curvature was an
important field of research in XX century. A key result about manifolds
of positive curvature is the following fundamental theorem.
\begin{theorem}[Killing, Hopf; see \cite{Wolf}, Corollary~2.4.10]
Let $M$ be a Riemannian manifold of dimension $n\geq 2$, and $K>0$ a
real number. Then $M$ is a compact connected manifold of constant curvature
$K$ if and only if $M$ is isometric to a quotient space of the form
$$S_{K}^{n}/\Gamma\,,$$
where $S_{K}^{n}$ is the standard $n$-sphere of radius $1/K^2$ in 
$\R^{n+1}$, and $\Gamma$ is a finite subgroup of $O(n+1)$ acting on 
$S_{K}^{n}$ without fixed points.
\end{theorem}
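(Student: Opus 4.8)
The plan is to establish both directions of the Killing--Hopf theorem, with the bulk of the work going into the ``only if'' direction. The ``if'' direction is essentially formal: if $M \cong S_K^n/\Gamma$ with $\Gamma < O(n+1)$ finite and fixed-point-free on $S_K^n$, then the covering map $S_K^n \to S_K^n/\Gamma$ is a Riemannian covering (local isometry), so $M$ inherits the constant curvature $K$ from $S_K^n$; finiteness of $\Gamma$ together with compactness of the sphere forces $M$ to be compact, and freeness of the action guarantees that the quotient is a manifold rather than merely an orbifold. One should also note that $S_K^n$ is connected for $n \geq 1$, hence so is its continuous image $M$.

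For the ``only if'' direction, suppose $M$ is a compact connected Riemannian manifold of constant curvature $K > 0$. The first step is to pass to the universal cover $\widetilde{M} \to M$; by a standard Cartan--Hadamard-type argument for simply connected complete spaces of constant curvature (here we need completeness of $\widetilde M$, which follows from compactness of $M$), $\widetilde M$ is isometric to the model space of constant curvature $K$ and dimension $n$, namely the round sphere $S_K^n$ of radius $1/\sqrt{K}$ (the statement writes $1/K^2$; I would use the correct normalization and flag the typo). This is the key geometric input and I would cite it rather than reprove it, since it is Wolf's Corollary~2.4.10 or a consequence of the theorems immediately preceding it. The second step is to identify $M$ with $\widetilde M/\pi_1(M)$, where $\pi_1(M)$ acts by deck transformations; since deck transformations of a Riemannian covering are isometries, we obtain $\pi_1(M) < \mathrm{Isom}(S_K^n) = O(n+1)$. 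The action of the deck group is properly discontinuous and free, and because $S_K^n$ is compact, properly discontinuous implies the group $\Gamma := \pi_1(M)$ is \emph{finite}. Freeness of the deck action is exactly the statement that $\Gamma$ acts on $S_K^n$ without fixed points in the usual sense.

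The main obstacle, and the one place where genuine care is needed, is the identification $\mathrm{Isom}(S_K^n) = O(n+1)$ and the verification that the deck action is fixed-point-free \emph{and} properly discontinuous in a way that yields finiteness: one must rule out, for instance, a deck transformation having an isolated fixed point, which for an isometry of the sphere would force it to be a reflection-type element but could still be handled by noting that a nontrivial deck transformation has no fixed points at all by the covering space theory. I would organize this by first recording that every isometry of $S_K^n$ extends to a linear isometry of $\R^{n+1}$ fixing the origin (since it must preserve the center of mass / the unique point equidistant structure), giving the embedding into $O(n+1)$; then invoke that a nontrivial deck transformation of a covering acts freely; and finally use compactness of $S_K^n$ plus discreteness of $\Gamma \subset O(n+1)$ (which is compact) to conclude $|\Gamma| < \infty$. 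Assembling these pieces gives the isometry $M \cong S_K^n/\Gamma$ with $\Gamma$ as required, completing the proof.
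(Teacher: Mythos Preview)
The paper does not actually prove this theorem; it merely states it and cites Wolf's book, Corollary~2.4.10. So there is no ``paper's own proof'' to compare against --- the Killing--Hopf theorem functions here as background input, not as something the author establishes.

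That said, your sketch is the standard argument and is essentially correct: pass to the universal cover, invoke the classification of simply connected complete space forms to identify $\widetilde{M}$ with $S_K^n$, realize $M$ as the quotient by the deck group, embed the deck group into $\mathrm{Isom}(S_K^n)=O(n+1)$, and use compactness of the sphere together with proper discontinuity to conclude finiteness. You also correctly flag the radius typo (the sphere of constant curvature $K>0$ has radius $1/\sqrt{K}$, not $1/K^2$). The only mild quibble is that you describe the identification of the universal cover with the model space as ``Cartan--Hadamard-type''; Cartan--Hadamard is really the nonpositive-curvature statement, and the relevant tool here is the Cartan--Ambrose--Hicks theorem or the direct classification of constant-curvature simply connected space forms. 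But this is terminology, not a gap.
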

The problem of classification of such spaces $S_{K}^{n}/\Gamma$ was called 
by Killing the \emph{Clifford-Klein Space Form Problem}. In the sequel we 
simply call it the \emph{Clifford-Klein Problem}.

It follows from the discussion above that Problem~\ref{Pm:classaffquotients} 
for $\Bbbk=\C$ is a part of the Clifford-Klein Problem. But in fact they 
are almost equivalent. Indeed, it is shown in \cite{Wolf}, Section~7.4, 
that for even $n$ we have only $2$ solutions for the Clifford-Klein 
Problem: the sphere $S^n$ ($\Gamma=1$) and the real projective space 
$\mathbb{RP}^n$ ($\Gamma=\Z/2$). For odd $n$ all needed subgroups $\Gamma$ 
of $O(n+1)$ (orthogonal representations) are obtained from the subgroups of 
$U((n+1)/2)$ (unitary representations) without fixed points by means of
the standard representation theory (Frobenius-Schur Theorem, see 
\cite{Wolf}, Theorem~4.7.3).

Certainly, the problem of classification of finite subgroups 
$G<GL(N,\Bbbk)$ is wider than the problem of classification of IQS 
themselves, because different subgroups $G$ and $H$
of $GL(N,\Bbbk)$ can give isomorphic quotients $\Bbbk^N/G$ and $\Bbbk^N/H$.
This happens, for example, if $G$ and $H$ are conjugate in $GL(N,\Bbbk)$.
At least for $\Bbbk=\C$ the converse statement also holds.
\begin{lemma}\label{L:conjgrps}
Let $G$ and $H$ be two finite subgroups of $GL(N,\C)$, $N\geq 2$. Assume
that $G$ and $H$ act without fixed points, so that $\C^N/G$ and $\C^N/H$
have isolated singularities. Then the quotients $\C^N/G$ and $\C^N/H$ are
isomorphic if and only if $G$ and $H$ are conjugate in $GL(N,\C)$.
\end{lemma}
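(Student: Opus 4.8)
The plan is to prove the nontrivial direction: if $\varphi\colon \C^N/G \xrightarrow{\sim} \C^N/H$ is an isomorphism of algebraic varieties, then $G$ and $H$ are conjugate in $GL(N,\C)$. The starting observation is that both quotients are normal affine varieties with a unique singular point (the image of the origin), since $G$ and $H$ act without fixed points in the sense of Definition~\ref{D:grpswithoutfps} and $N\geq 2$; hence $\varphi$ must carry the singular point of $\C^N/G$ to that of $\C^N/H$, and consequently $\varphi$ induces an isomorphism of the complete local rings at these two points. Passing to completions, I would then invoke the fact that $\C^N \to \C^N/G$ and $\C^N \to \C^N/H$ realize the punctured affine space (or rather the formal/analytic germ at $0$) as a covering: more precisely, $\widehat{\C^N}$ is the \'etale (away from $0$) cover of $\widehat{\C^N/G}$ corresponding to the subgroup structure, and $G$, respectively $H$, is recovered as the (local) fundamental group of the smooth locus, i.e. the Galois group of this covering. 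Since $\varphi$ is an isomorphism of the base germs, it lifts to an isomorphism of the universal covers that is equivariant with respect to an isomorphism $G \cong H$ of the deck groups.

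More concretely, here is the route I expect to take. First, replace $G$ and $H$ by conjugates inside $U(N)$, which changes nothing up to isomorphism of quotients. The link of the singular point of $\C^N/G$ is $S^{2N-1}/G$, a compact manifold (the action on the sphere is free), with $\pi_1(S^{2N-1}/G)\cong G$ because $S^{2N-1}$ is simply connected for $N\geq 2$; likewise $\pi_1(S^{2N-1}/H)\cong H$. An analytic isomorphism of the germs $(\C^N/G,0)$ and $(\C^N/H,0)$ restricts to a homeomorphism of a punctured neighborhood of $0$, hence (after intersecting with spheres of small radius) to a homotopy equivalence of links, giving an abstract isomorphism $\theta\colon G \xrightarrow{\sim} H$. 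The point is to upgrade this abstract isomorphism to conjugacy of the two \emph{representations}. For that I would pull back: the germ $(\C^N,0)$ is the universal cover of $(\C^N/G,0)$ and of $(\C^N/H,0)$ in the sense of ramified/analytic covers, and the isomorphism of bases lifts to a biholomorphism $\widetilde\varphi\colon (\C^N,0)\to(\C^N,0)$ intertwining the $G$-action with the $H$-action via $\theta$. Linearizing $\widetilde\varphi$ at $0$ (or using Lemma~\ref{L:linearization} applied to the group $\langle G, \widetilde\varphi H \widetilde\varphi^{-1}\rangle$, or simply Cartan's linearization of the finite group $\langle \widetilde\varphi\rangle$-conjugated actions) produces a linear change of coordinates $A\in GL(N,\C)$ with $A G A^{-1} = H$.

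The step I expect to be the main obstacle is the \emph{linearization of the lift}: $\widetilde\varphi$ is a priori only an analytic (or formal) automorphism of the germ $(\C^N,0)$, not a linear map, so one must show that after conjugating everything by a suitable biholomorphism the two group actions become conjugate by a genuine element of $GL(N,\C)$. The clean way is to apply the averaging trick to the finite group generated by $H$ together with $\widetilde\varphi G \widetilde\varphi^{-1}$ — wait, that group need not be finite; rather one conjugates the two actions of the \emph{same abstract group} $G\cong H$ and averages the coordinate change $\widetilde\varphi$ over $G$, i.e. sets $A = \frac{1}{|G|}\sum_{g\in G} \theta(g)^{-1}\circ (d\widetilde\varphi)_0 \circ g$ after first replacing $\widetilde\varphi$ by something tangent to the identity composed with $(d\widetilde\varphi)_0$; a short computation shows this $A$ is invertible (it reduces to $(d\widetilde\varphi)_0$ modulo a nilpotent correction, or one argues it is an isomorphism of $G$-representations) and conjugates $G$ onto $H$. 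Finally, one must check the auxiliary point that the isomorphism of germs genuinely lifts, which uses that $\C^N\setminus\{0\}\to (\C^N\setminus\{0\})/G$ is an unramified covering with group $G$ and that the smooth locus of $\C^N/G$ near $0$ has local fundamental group exactly $G$ — this is where $N\geq 2$ and the fixed-point-freeness are essential.
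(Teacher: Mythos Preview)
Your overall strategy matches the paper's: show the isomorphism must send singular point to singular point, lift it through the universal covers $\C^N\setminus\{0\}\to(\C^N/G)\setminus\{P\}$ and $\C^N\setminus\{0\}\to(\C^N/H)\setminus\{Q\}$ to an equivariant biholomorphism $\bar f\colon\C^N\to\C^N$ (extending over $0$ by removability), and then extract a linear conjugacy. So the approach is correct and essentially the same.

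Where you make your own life harder than the paper does is the ``linearization of the lift.'' You anticipate an averaging argument, but none is needed. The groups $G$ and $H$ are \emph{already} acting linearly on $\C^N$, hence on $T_0\C^N\cong\C^N$. If $\bar f(g\cdot x)=\theta(g)\cdot\bar f(x)$ for all $g\in G$ and $x\in\C^N$, then differentiating at $0$ gives $A\,g=\theta(g)\,A$ for every $g$, where $A=(d\bar f)_0\in GL(N,\C)$. Thus $H=AGA^{-1}$ immediately. (Indeed, your averaged map $\frac{1}{|G|}\sum_g \theta(g)^{-1}(d\bar f)_0\,g$ collapses term-by-term to $(d\bar f)_0$ once you use this equivariance of the differential.) The paper records exactly this one-line observation and stops.
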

\begin{proof}
Denote by $g$ and $h$ the canonical projections $g\colon
\C^N\to \C^N/G$ and $h\colon \C^N\to C^N/H$, by $0$ the origin in
$\C^N$, and let $P=g(0)$, $Q=h(0)$. The sufficiency condition
of the lemma is obvious, so let us prove necessity. 

Let $f\colon \C^N/G\to \C^N/H$ be an isomorphism. Since $G$ and $H$ act
freely on $\C^N\setminus\{0\}$, $g$ and $h$ restrict to
universal coverings $\C^N\setminus\{0\}\to (\C^N/G)\setminus\{P\}$ and
$\C^N\setminus\{0\}\to (\C^N/G)\setminus\{Q\}$. Clearly $f(P)=Q$, thus
$f$ lifts to an analytic isomorphism $\bar{f}\colon \C^N\setminus\{0\}\to
\C^N\setminus\{0\}$. But $f$ is also continuous at $P$, and it follows
that $\bar{f}$ extends continuously to $0$ by setting $\bar{f}(0)=0$.
This means that $0$ is a removable singular point of the analytic map
$\bar{f}$, and actually $\bar{f}$ is an analytic isomorphism in the
diagram
$$
\begin{xymatrix}{
&\C^N\ar[r]^{\bar{f}}\ar[d]_g &\C^N\ar[d]^h \\
&\C^N/G\ar[r]^f     &\C^N/H
}
\end{xymatrix}
$$
After fixing a reference point $O$ in $(\C^N/G)\setminus\{P\}$, $f$ induces 
an isomorphism
$$f_*\colon \pi_1((\C^N/G)\setminus\{P\},O)\to 
\pi_1((\C^N/H)\setminus\{Q\},f(O))$$
between fundamental groups, which in turn are isomorphic to $G$
and $H$ respectively. Fixing the lifting $\bar{f}$, we fix also an
isomorphism between $G$ and $H$ such that $\bar{f}$ becomes equivariant
with respect to it. Now observe that $G$ and $H$ also act naturally
on the tangent space $T_0\C^N$ which is canonically isomorphic to $\C^N$.
Let $A\colon T_0\C^N\to T_0\C^N$ be the differential of $\bar{f}$ at $0$.
Then
$$H=AGA^{-1}\,.$$
\end{proof}
\begin{lemma}\label{L:automorphism}
Let $\varphi,\psi\colon G\to GL(N,\Bbbk)$ be two exact linear 
representations of a finite group $G$ over a field $\Bbbk$. Then the images
$\varphi(G)$ and $\psi(G)$ are conjugate in $GL(N,\Bbbk)$ if and only if
there exists an automorphism $\alpha$ of the group $G$ such that
representations $\psi\circ\alpha$ and $\varphi$ are equivalent.
\end{lemma}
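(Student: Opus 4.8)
The plan is to unwind both directions directly from the definition of equivalence of representations. Recall that two representations $\rho_1,\rho_2\colon G\to GL(N,\Bbbk)$ are equivalent precisely when there is a matrix $T\in GL(N,\Bbbk)$ with $\rho_2(g)=T\rho_1(g)T^{-1}$ for all $g\in G$; in particular, equivalent representations have conjugate images. So for the ``if'' direction, suppose $\alpha\in\operatorname{Aut}(G)$ makes $\psi\circ\alpha$ equivalent to $\varphi$, say $\varphi(g)=T\,\psi(\alpha(g))\,T^{-1}$ for all $g$. As $g$ runs over $G$, so does $\alpha(g)$, hence $\varphi(G)=\{T\psi(\alpha(g))T^{-1}:g\in G\}=T\,\psi(G)\,T^{-1}$, so the images are conjugate.

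For the ``only if'' direction, assume $\psi(G)=T\,\varphi(G)\,T^{-1}$ for some $T\in GL(N,\Bbbk)$. Conjugation by $T$ is then a group isomorphism $c_T\colon \varphi(G)\to\psi(G)$. Since both $\varphi$ and $\psi$ are exact, i.e. injective, we may form the composite $\alpha=\psi^{-1}\circ c_T\circ\varphi\colon G\to G$, which is an isomorphism of $G$ with itself, hence an automorphism. By construction, for every $g\in G$ we have $\psi(\alpha(g))=c_T(\varphi(g))=T\,\varphi(g)\,T^{-1}$, which says exactly that the representation $\psi\circ\alpha$ is equivalent to $\varphi$, with intertwining matrix $T$.

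There is essentially no obstacle here: the only thing to be careful about is that injectivity of both representations is genuinely used, namely to invert $\psi$ on its image and to guarantee that the resulting self-map $\alpha$ of $G$ is a bijection rather than merely a homomorphism. (If $\psi$ were not injective, $\psi^{-1}$ would not be defined on $\psi(G)$; if $\varphi$ were not injective, $\alpha$ need not be surjective.) Everything else is a formal manipulation of conjugations. I would write the argument in two short paragraphs mirroring the two implications, with the composite $\alpha=\psi^{-1}\circ c_T\circ\varphi$ as the single key construction.
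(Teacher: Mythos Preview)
Your proof is correct and is exactly the straightforward verification the paper has in mind; indeed, the paper does not spell out a proof at all but simply declares it ``straightforward and left to the reader'' with a pointer to \cite{Wolf}, Lemma~4.7.1. Your construction $\alpha=\psi^{-1}\circ c_T\circ\varphi$ is the natural one, and your remark that injectivity of both representations is genuinely needed is apt.
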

\begin{proof} The proof is straightforward and left to the reader. See also
\cite{Wolf}, Lemma~4.7.1.
\end{proof}
\begin{definition}
Let $p$ and $q$ be prime numbers, not necessarily distinct. We say that
a finite group $G$ satisfies the \emph{$pq$-condition}, if every subgroup
of $G$ of order $pq$ is cyclic.
\end{definition}
The following theorem is the cornerstone of the whole classification of
groups without fixed points. Note that it does not impose any conditions
on the characteristic of the base field $\Bbbk$.
\begin{theorem}\label{T:pq}
Let $G$ be a finite subgroup of $GL(N,\Bbbk)$ without fixed points, where
the field $\Bbbk$ is arbitrary. Then $G$ satisfies the $pq$-conditions
for all primes $p$ and $q$.
\end{theorem}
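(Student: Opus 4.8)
The plan is to reduce the statement to a purely group-theoretic fact about a subgroup of order $pq$ and then argue that a non-cyclic such subgroup, when realized as a fixed-point-free representation, must contain an element with eigenvalue $1$. So let $K<G$ be a subgroup with $|K|=pq$; I want to show $K$ is cyclic. If $p\neq q$, then by Sylow theory $K$ has a normal Sylow $p$-subgroup or a normal Sylow $q$-subgroup (at least one of them, since the number of Sylow $q$-subgroups divides $p$ and is $\equiv 1\pmod q$, etc.), so $K$ is a semidirect product $\Z/p\rtimes\Z/q$ (up to swapping $p,q$). The non-abelian case occurs precisely when $q\mid p-1$ and the action is faithful; I must rule this out. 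If $p=q$, then $K$ has order $p^2$, hence is abelian, so $K\cong\Z/p^2$ (cyclic, done) or $K\cong\Z/p\times\Z/p$, and I must rule out the latter.

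First I would dispose of the elementary abelian case $K\cong\Z/p\times\Z/p$. Restrict the representation to $K$; over $\Bbbk$ (which we may as well assume contains the relevant roots of unity after extending scalars, or argue via a splitting field — note the representation remains fixed-point-free) the abelian group $K$ acts diagonally, so $\Bbbk^N$ decomposes into common eigenlines with characters $\chi\colon K\to\Bbbk^\times$. Fixed-point-freeness means: for every nontrivial $k\in K$, no character appearing on $\Bbbk^N$ is trivial on $k$. Equivalently, the characters $\chi$ occurring generate the dual group $\widehat K=\operatorname{Hom}(K,\Bbbk^\times)$ and moreover no proper cyclic subgroup of $K$ lies in $\bigcap\ker\chi$ — in fact we need $\bigcap_\chi\ker\chi=1$ and, more, each $k\neq 1$ must be outside some $\ker\chi$. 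But $K\cong(\Z/p)^2$ has $p+1$ subgroups of order $p$, and a single character $\chi$ kills an index-$p$ (hence order-$p$, if $\chi$ is nontrivial) subgroup — wait, $\ker\chi$ has order $p$. The condition is that for every $k\neq 1$ there is $\chi$ with $\chi(k)\neq 1$, i.e. $\bigcup_\chi\ker\chi^c$ covers $K\setminus 1$; this is automatic as soon as the $\chi$'s separate points, which only requires $\bigcap\ker\chi=1$ — so diagonalizability alone does not immediately give a contradiction. The real obstruction: look at a fixed $k\neq 1$. The cyclic group $\langle k\rangle\cong\Z/p$ acts on $\Bbbk^N$ with eigenvalues that are $p$-th roots of unity, none equal to $1$ (since $G$, hence $\langle k\rangle$, is fixed-point-free), so $\zeta^{a_1},\dots,\zeta^{a_N}$ with all $a_i\not\equiv 0$. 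Now run this over all $p+1$ order-$p$ subgroups of $K$ and count eigenvalue-$1$ occurrences of the generator of $K/\langle k\rangle$-type elements, or better: use a character-sum / trace argument — compute $\sum_{k\in K}\chi(k)$ for the permutation-like action, or apply the standard lemma that a fixed-point-free $K$ must have a unique subgroup of order $p$ for each prime $p$ (which fails for $(\Z/p)^2$). That last statement is the cleanest route: I would prove that a fixed-point-free group has at most one subgroup of order $p$ for each prime $p$, because if $\langle a\rangle,\langle b\rangle$ are two distinct ones then $ab^{-1}$ or some element of $\langle a,b\rangle$ has eigenvalue $1$ — concretely, on any eigenline $L$ with character $\chi$, if $\chi|_{\langle a\rangle}$ and $\chi|_{\langle b\rangle}$ are both faithful one derives a relation forcing triviality somewhere. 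I expect this is where the genuine argument lives.

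For the non-abelian semidirect case $K=\Z/p\rtimes\Z/q$ with $q\mid p-1$: here $K$ has $p$ distinct Sylow $q$-subgroups (each of order $q$), so $K$ contains more than one subgroup of order $q$. By the "at most one subgroup of order $p$ for each prime" principle just established, $K$ cannot be fixed-point-free, contradiction. So the whole theorem collapses onto that single lemma: \emph{in a fixed-point-free finite linear group, for every prime $p$ there is at most one subgroup of order $p$} — equivalently the Sylow $p$-subgroups are cyclic or generalized quaternion (this is the classical statement, e.g. Wolf, Theorem 5.3.1 or the Burnside/Vincent results), but we only need the weak consequence about order-$p$ subgroups.

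The proof of that lemma: suppose $a,b\in G$ both of order $p$ with $\langle a\rangle\neq\langle b\rangle$. I would pass to an eigenvector $v$ of $a$ for an eigenvalue $\zeta\neq 1$ (a primitive $p$-th root of unity, replacing $a$ by a power so that $\zeta$ is primitive — possible since $a\neq 1$ and $a$ has no eigenvalue $1$, so all eigenvalues are primitive $p$-th roots after adjusting). Consider the $\langle a,b\rangle$-orbit structure, or more slickly: the element $b$ does not normalize $\langle a\rangle$ in general, but look at the subgroup $H=\langle a,b\rangle$ and a minimal $H$-invariant subspace $W$; on $W$ the representation is irreducible, and by Clifford theory restricted to the normal closure of $\langle a\rangle$... this is getting complicated. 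Cleaner: use the trace. Set $s=\sum_{g\in\langle a,b\rangle}g$ acting on $\Bbbk^N$; its image is the fixed subspace, which is $0$ by fixed-point-freeness only if $\langle a,b\rangle$ itself is fixed-point-free — but actually the projector onto $\langle c\rangle$-fixed vectors is $\frac1p\sum_{i=0}^{p-1}c^i=0$ for every element $c$ of order $p$, so $\sum_{i=0}^{p-1}a^i=0$ and $\sum_{i=0}^{p-1}b^i=0$ as operators. From $\sum a^i=0$ and $\sum b^i=0$ I want a contradiction if $a,b$ generate a group in which these are distinct order-$p$ subgroups; multiply and take traces, using that for $c$ of order $p$, $\operatorname{tr}(c)=\zeta_1+\dots+\zeta_N$ is a sum of nontrivial $p$-th roots of unity, hence $\operatorname{tr}(c)\equiv -N\cdot(\text{something})$... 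I think the honest path is the one in Wolf: show the Sylow $p$-subgroup has a unique minimal subgroup, invoking the classification of $p$-groups with this property (cyclic or quaternion). I would therefore structure the final proof as: (1) $pq$-condition with $p=q$ reduces to "Sylow $p$ not $(\Z/p)^2$", handled by the unique-minimal-subgroup lemma; (2) $pq$-condition with $p\neq q$: if $K$ non-cyclic then WLOG $\Z/q$ acts nontrivially on $\Z/p$, giving $p>1$ Sylow $q$-subgroups, again contradicting unique-minimal-subgroup for the prime $q$; (3) prove the lemma via the operator identity $\sum_{i=0}^{p-1}c^i=0$ together with a counting argument on eigenvalues, which I expect to be \textbf{the main obstacle} and the place requiring the most care.
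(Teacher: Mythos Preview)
The paper gives no proof of its own here: it simply cites \cite{Wolf}, Theorem~5.3.1. So there is nothing to compare against directly, and your proposal has to be judged on its own merits.

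Your reduction of the $p^2$-case is essentially right and can be finished cleanly. If $K\cong(\Z/p)^2$, extend scalars so that $K$ is diagonalisable; any character $\chi\colon K\to\Bbbk^\times$ lands in the $p$-th roots of unity, so $|\ker\chi|\geq p>1$, and any nontrivial $k\in\ker\chi$ acts with eigenvalue $1$ on the corresponding eigenline. That is the whole argument, and it is hiding inside your ``does not immediately give a contradiction'' paragraph---you only need to look at \emph{one} character, not at all of them simultaneously. (When $\chr\Bbbk=p$ one argues separately: $(a-1)^p=a^p-1=0$, so every element of order $p$ is already unipotent.)

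The genuine gap is in the case $p\neq q$. You reduce everything to the lemma ``a fixed-point-free finite linear group has at most one subgroup of order $p$ for each prime $p$'', and then try to prove that. But this lemma is \emph{false}. The binary icosahedral group $I^*\cong SL(2,5)$ acts on $\C^2$ without fixed points (Lemma~\ref{L:Igrpreprs}), yet it has ten subgroups of order $3$---its Sylow $3$-subgroups, which are not normal. What \emph{is} true, and what you slide into at the end, is that each Sylow $p$-subgroup has a unique subgroup of order $p$ (equivalently, is cyclic or generalised quaternion; this is Theorem~\ref{T:Sylowsgrps}). But that weaker statement does not help you: the $p$ distinct Sylow $q$-subgroups of a non-abelian group of order $pq$ are pairwise non-conjugate inside $K$ and are certainly not contained in a common Sylow $q$-subgroup, so ``unique minimal subgroup of the Sylow'' yields no contradiction.

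The standard route for $p\neq q$ goes instead through the representation theory of the non-abelian group $K=\langle a,b\mid a^p=b^q=1,\,bab^{-1}=a^r\rangle$. Over a splitting field of characteristic prime to $pq$, every irreducible $K$-module is either one-dimensional (factoring through $K/\langle a\rangle$, so $a$ acts trivially and has eigenvalue $1$) or $q$-dimensional, induced from a nontrivial character of $\langle a\rangle$; in the induced module, $b$ acts on the natural basis $\{1\otimes v,\,b\otimes v,\dots,b^{q-1}\otimes v\}$ as the cyclic permutation matrix, which has eigenvalue $1$. Either way some nontrivial element of $K$ fixes a line, so $K$ cannot be fixed-point-free. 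Replace your false lemma with this computation and the proof goes through.
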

\begin{proof}
See \cite{Wolf}, Theorem~5.3.1.
\end{proof}

Now we can formulate a program which should give a classification of 
IQS $\Bbbk^N/G$ in the nonmodular case $\chr\Bbbk \nmid |G|$. 
This program was first suggested by Vincent \cite{Vincent} for $\Bbbk=\C$ 
as a method of solution of the Clifford-Klein Problem. First one classifies 
all finite groups satisfying all the $pq$-conditions. This is a purely 
group-theoretic problem completely settled for solvable groups by 
Zassenhaus, Vincent and Wolf. Next, for every such group $G$ one studies 
its linear representations over the given field $\Bbbk$, 
$\chr\Bbbk \nmid |G|$, and determines all exact irreducible
representations without fixed points. Arbitrary representation without
fixed points is a direct sum of irreducible ones. Finally, for every $G$
one calculates its group of automorphisms and determines when two 
irreducible representations without fixed points are equivalent modulo an 
automorphism. The last two steps were performed by Vincent and Wolf for 
$\Bbbk=\C$. It follows from Lemma~\ref{L:conjgrps} and other results of 
this section that in this way we indeed get a complete classification of 
isolated quotient singularities over $\C$ up to an analytic equivalence. 
For other fields some further identifications may be needed, i.~e., 
different output classes of the Vincent program might still give isomorphic 
quotient singularities. However, we do not have any examples of such 
phenomenon.

\subsection{Consequences of $pq$-conditions. Some representations}
In the particular case $p=q$, the $pq$-condition is called the 
\emph{$p^2$-condition}.
\begin{theorem}[\cite{Wolf}, Theorem~5.3.2]\label{T:Sylowsgrps}
If $G$ is a finite group, then the following conditions are equivalent:
\begin{enumerate}
\item $G$ satisfies the $p^2$-conditions for all primes $p$;
\item every Abelian subgroup of $G$ is cyclic;
\item if $p$ is an odd prime, then every Sylow $p$-subgroup of $G$ is
cyclic; Sylow $2$-subgroups of $G$ are either cyclic, or generalized
quaternion groups.
\end{enumerate}
\end{theorem}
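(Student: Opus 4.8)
The plan is to prove the equivalence of the three conditions in Theorem~\ref{T:Sylowsgrps} by a cycle of implications, or rather by establishing $(2)\Leftrightarrow(3)$ directly and then $(1)\Leftrightarrow(2)$. First I would unwind the definitions: the $p^2$-condition says every subgroup of order $p^2$ is cyclic, i.e.\ $G$ contains no subgroup isomorphic to $(\Z/p)\times(\Z/p)$. The implication $(2)\Rightarrow(1)$ is then immediate, since a noncyclic group of order $p^2$ is abelian (every group of order $p^2$ is abelian) and isomorphic to $(\Z/p)^2$, hence would violate $(2)$. For $(1)\Rightarrow(2)$ I would argue contrapositively: if $A<G$ is abelian and noncyclic, then by the structure theorem for finite abelian groups $A$ has a subgroup isomorphic to $(\Z/p)\times(\Z/p)$ for some prime $p$ dividing $|A|$ (take two distinct cyclic factors in the primary decomposition for the same $p$, or a single factor of rank $\geq 2$, and extract the $p$-torsion), which is a noncyclic subgroup of order $p^2$, contradicting $(1)$.

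The equivalence $(2)\Leftrightarrow(3)$ is the group-theoretic heart. The direction $(3)\Rightarrow(2)$ is easy: an abelian subgroup $A$ lies inside some Sylow $p$-subgroup after passing to each primary component; more precisely, the $p$-part $A_p$ of $A$ is contained in a Sylow $p$-subgroup $P$ of $G$, and a subgroup of a cyclic (resp.\ generalized quaternion) group is cyclic (resp.\ cyclic or generalized quaternion), but an abelian subgroup of a generalized quaternion group is necessarily cyclic since $Q_{2^n}$ has a unique element of order $2$ and hence no $(\Z/2)^2$; so every $A_p$ is cyclic, whence $A\cong\prod_p A_p$ is cyclic. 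For $(2)\Rightarrow(3)$ I would use the classical classification of finite $p$-groups all of whose abelian subgroups are cyclic: such a $p$-group has cyclic center, and one invokes the theorem (Burnside, or see \cite{Wolf}, proof of Theorem~5.3.2) that a $p$-group with a cyclic subgroup of index $p$, or more directly a $p$-group containing a unique subgroup of order $p$, is cyclic when $p$ is odd, and cyclic or generalized quaternion when $p=2$. The bridge is: if every abelian subgroup of the $p$-group $P$ is cyclic, then in particular $P$ has only one subgroup of order $p$ (two distinct ones would generate a noncyclic abelian, hence forbidden, subgroup — one checks the subgroup they generate, or rather $\Omega_1$ of a suitable abelian subgroup, is elementary abelian of rank $\geq 2$); then the cited structure theorem applies.

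The main obstacle is the last step, $(2)\Rightarrow(3)$ at the prime $2$: showing that a $2$-group with a unique involution is cyclic or generalized quaternion. This is a nontrivial classical result whose proof proceeds by induction on $|P|$, analyzing the center and a maximal normal abelian subgroup, and I would simply cite it from \cite{Wolf}, Theorem~5.3.2, rather than reproduce it. The rest of the argument is routine manipulation of Sylow theory and the structure theorem for finite abelian groups; I would keep it brief and flag the reliance on Wolf for the $2$-group classification.
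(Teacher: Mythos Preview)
The paper does not supply its own proof of this theorem: it is stated with the citation \cite{Wolf}, Theorem~5.3.2, and no proof environment follows. So there is nothing in the paper to compare your argument against beyond the reference to Wolf.

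That said, your sketch is the standard one and is essentially what Wolf does. The equivalences $(1)\Leftrightarrow(2)$ and $(3)\Rightarrow(2)$ are routine, as you say. In $(2)\Rightarrow(3)$ your bridge ``two distinct subgroups of order $p$ would generate a noncyclic abelian subgroup'' is slightly imprecise as written: two arbitrary subgroups of order $p$ in a $p$-group $P$ need not generate an abelian group. The clean fix is to use the center: pick $z\in Z(P)$ of order $p$; if $x\in P$ has order $p$ and $\langle x\rangle\neq\langle z\rangle$, then $\langle x,z\rangle\cong(\Z/p)^2$ is a noncyclic abelian subgroup. Hence under $(2)$ there is a unique subgroup of order $p$, and then the classical structure theorem (cyclic for $p$ odd; cyclic or generalized quaternion for $p=2$) finishes. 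You already flag that last step as the nontrivial input to be cited from Wolf, which is appropriate.
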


We recall the definition of generalized quaternion groups below (see
\eqref{E:Qgroup}).

Groups $G$ such that all their Sylow $p$-subgroups are cyclic constitute
the simplest class of groups possessing representations without
fixed points (provided also that $G$ satisfies all the $pq$-conditions).
All such groups are solvable (\cite{Wolf}, Lemma~5.4.3) and even
metacyclic (\cite{Wolf}, Lemma~5.4.5). Nonsolvable groups without
fixed points necessarily contain Sylow $2$-subgroups isomorphic to
generalized quaternion groups. Classification of nonsolvable groups
without fixed points is available only over $\C$ and is based on the
following result of Zassenhaus.
\begin{theorem}[\cite{Wolf}, Theorem~6.2.1]
The binary icosahedral group $I^*$ is the only finite perfect group 
possessing representations without fixed points over $\C$.
\end{theorem}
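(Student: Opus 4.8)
The statement to prove is that the binary icosahedral group $I^*$ (of order $120$) is the only finite perfect group admitting a fixed-point-free representation over $\C$. The plan is to combine the $pq$-conditions (Theorem~\ref{T:pq}) with the structure theory of Sylow subgroups (Theorem~\ref{T:Sylowsgrps}) to pin down the possible order and composition of such a group, and then appeal to the classification of finite simple groups of small order to identify the unique possibility.

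First I would let $G$ be a finite perfect group with a fixed-point-free representation over $\C$. By Theorem~\ref{T:pq}, $G$ satisfies all $pq$-conditions, in particular all $p^2$-conditions, so by Theorem~\ref{T:Sylowsgrps} every Sylow $p$-subgroup of $G$ is cyclic for $p$ odd, and the Sylow $2$-subgroup is cyclic or generalized quaternion. A perfect group cannot have a cyclic (more generally, abelian) Sylow $2$-subgroup: by Burnside's normal $p$-complement theorem a group with a central Sylow $2$-subgroup has a normal $2$-complement, and more to the point a classical transfer argument shows that a group with cyclic Sylow $2$-subgroup is $2$-nilpotent, hence solvable-by-odd and never perfect unless trivial. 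So the Sylow $2$-subgroup of $G$ is a nontrivial generalized quaternion group. Next, since $G$ is perfect and nontrivial, it has a nonabelian composition factor; I would argue that $G$ itself must be close to simple. Concretely, the $pq$-conditions are inherited by subgroups and quotients, so any composition factor of $G$ is again a simple group all of whose Sylow subgroups satisfy these strong constraints (odd Sylows cyclic, Sylow $2$ quaternion). The Brauer--Suzuki theorem rules out generalized quaternion Sylow $2$-subgroups of order $\geq 16$ for simple groups; the only finite simple group with quaternion Sylow $2$-subgroup of order exactly $8$ and all odd Sylows cyclic is $A_5 \cong PSL(2,5)$ (whose order is $60 = 2^2\cdot 3\cdot 5$, with cyclic Sylow $3$ and Sylow $5$ and Klein four — wait, here one must be careful: $A_5$ has Sylow $2$ the Klein four-group, not quaternion). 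The resolution is that the relevant constraint applies to $G$, not to $A_5$ as an abstract quotient: a perfect group with fixed-point-free representation and quaternion Sylow $2$ must be a perfect central extension, and the unique perfect central extension of $A_5$ with quaternion Sylow $2$-subgroup is the binary icosahedral group $I^* = SL(2,5)$, of order $120$.

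The key steps, in order, are therefore: (1) reduce to $G$ satisfying all $pq$-conditions and deduce the Sylow structure from Theorem~\ref{T:Sylowsgrps}; (2) show a nontrivial perfect group cannot have cyclic Sylow $2$-subgroup, so the Sylow $2$-subgroup is generalized quaternion and in particular $G$ is nonsolvable; (3) analyze the composition factors and, using the Brauer--Suzuki theorem (or the classification of simple groups with quaternion or small Sylow $2$-subgroups), show the only simple group that can appear is $A_5$ and that the order of the Sylow $2$-subgroup of $G$ is exactly $8$; (4) conclude that $G$ is a perfect central extension of $A_5$ with quaternion Sylow $2$, identify it as $SL(2,5)=I^*$, and finally exhibit the genuine fixed-point-free representation of $I^*$ (the standard $2$-dimensional representation of $SL(2,5)$, whose nontrivial elements have no eigenvalue $1$) to show this case really occurs.

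The main obstacle is step~(3): controlling the nonabelian composition factors. The naive hope that ``$G$ simple'' fails — $I^*$ is not simple — so one cannot simply quote a classification of finite simple groups satisfying the $pq$-conditions and be done. One must instead work with the full subgroup structure: a fixed-point-free representation restricts to a fixed-point-free representation of every subgroup, which forces every subgroup of order $pq$ to be cyclic and pins down metacyclic structure on many subgroups; combined with a careful count one shows that the abelianization being trivial together with the quaternion Sylow $2$-subgroup leaves no room beyond $SL(2,5)$. Since this is precisely the content of Zassenhaus's theorem, I would expect the author's proof to either reproduce Zassenhaus's original $2$-cohomological argument or, as the excerpt suggests, simply cite \cite{Wolf}, Theorem~6.2.1; a self-contained modern proof would lean on the Brauer--Suzuki theorem to eliminate larger quaternion $2$-groups and then on the Schur multiplier of $A_5$ (which is $\Z/2$) to force the central extension to be $SL(2,5)$.
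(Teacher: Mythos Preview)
The paper does not prove this theorem at all: immediately after stating it, the author writes that ``the proof of this theorem is very technical and occupies about 15 pages of \cite{Wolf} (the original proof of Zassenhaus was not complete)'' and leaves it at that. You correctly anticipated this possibility at the end of your proposal.

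Your sketch is therefore not a comparison target but a genuine alternative to the Zassenhaus--Wolf argument. The strategy---use the $pq$-conditions to force the Sylow $2$-subgroup to be generalized quaternion, invoke Brauer--Suzuki to produce a central involution, pass to a quotient with dihedral (or Klein four) Sylow $2$-subgroup, identify the simple composition factor as $A_5\cong PSL(2,5)$, and then use the Schur multiplier to recover $SL(2,5)$---is a legitimate modern route and is indeed shorter in spirit than the fifteen pages in Wolf, at the cost of importing Brauer--Suzuki (and implicitly Gorenstein--Walter for the dihedral case). Zassenhaus's 1935 argument, which Wolf follows and repairs, predates these theorems and proceeds by a direct and intricate analysis of the subgroup lattice and the $pq$-conditions; your approach trades that bare-hands work for heavier structural theorems.

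Two points to tighten. First, your statement that Brauer--Suzuki ``rules out generalized quaternion Sylow $2$-subgroups of order $\geq 16$ for simple groups'' is off: Brauer--Suzuki rules out \emph{all} generalized quaternion Sylow $2$-subgroups in simple groups, including $Q_8$. You seem to notice the tension yourself when you observe that $A_5$ has Klein-four Sylow $2$-subgroup; the correct picture is that no nonabelian simple composition factor of $G$ can itself have quaternion Sylow $2$, so the quaternion structure lives only in the perfect central extension. Second, the $pq$-conditions do not pass to quotients in general (already $Q_8/Z(Q_8)\cong V_4$ fails the $2^2$-condition), so after modding out the central involution you cannot simply re-apply the hypothesis; you must instead track directly that odd Sylow subgroups remain cyclic in the quotient and then invoke the classification of simple groups with dihedral Sylow $2$-subgroup to isolate $PSL(2,q)$, and use cyclicity of the Sylow $p$-subgroup (for $p=\mathrm{char}$) plus the remaining $pq$-conditions on $G$ to force $q=5$. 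With those repairs your outline can be completed, but it is not as short as the sketch suggests.
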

The proof of this theorem is very technical and occupies about 15 pages
of \cite{Wolf} (the original proof of Zassenhaus was not complete).

Now let us describe some groups and their complex representations which
we use in the sequel.

\emph{Generalized quaternion group} $Q2^a$, $a\geq 3$, is a group of
order $2^a$ with generators $P$, $Q$, and relations
\begin{equation}\label{E:Qgroup}
Q^{2^{a-1}}=1\,,\; P^2=Q^{2^{a-2}}\,,\; PQP^{-1}=Q^{-1}\,.
\end{equation}
We get the usual quaternion group $Q8=\{\pm 1,\pm i,\pm j,\pm k\}$ for 
$a=3$.
\begin{lemma}[\cite{Wolf}, Lemma~5.6.2]\label{L:Qgrpreprs}
Let $Q2^a$, $a\geq 3$, be a generalized quaternion group and let $k$ be 
one of $2^{a-3}$ numbers $1$, $3$, $5,\dots,2^{a-2}-1$. Any irreducible
complex representation without fixed points of the group $Q2^a$ is 
equivalent to one of the following $2$-dimensional representations
$\alpha_k$:
$$\alpha_k(P)=
\begin{pmatrix}
   0 & 1 \\
   -1 & 0
\end{pmatrix},\;
\alpha_k(Q)=
\begin{pmatrix}
   e^{2\pi ik/2^{a-1}} & 0 \\
   0 & e^{-2\pi ik/2^{a-1}}
\end{pmatrix}.
$$
The representations $\alpha_k$ are pairwise nonequivalent.
\end{lemma}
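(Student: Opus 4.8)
The plan is to locate the central involution of $Q2^{a}$, use the fixed-point-free hypothesis to pin down how it must act, and then read the normal form $\alpha_{k}$ directly off an eigenvector of $\rho(Q)$. First I would set $z=P^{2}=Q^{2^{a-2}}$: since $Q$ has order $2^{a-1}$, the element $z$ has order $2$, and it is central because $PzP^{-1}=Q^{-2^{a-2}}=z$. In any representation $\rho$ without fixed points, $\rho(z)^{2}=I$ and $1$ is not an eigenvalue of $\rho(z)$, so $\rho(z)=-I$. This already excludes the one-dimensional representations: for a character $\chi$ the relation $PQP^{-1}=Q^{-1}$ forces $\chi(Q)^{2}=1$, whence $\chi(z)=\chi(Q)^{2^{a-2}}=1\neq-1$. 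Hence every fixed-point-free irreducible representation has dimension at least $2$.

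Next, let $\rho\colon Q2^{a}\to GL(V)$ be irreducible and without fixed points, and choose an eigenvector $v$ of $\rho(Q)$, say $\rho(Q)v=\zeta^{k}v$ with $\zeta=e^{2\pi i/2^{a-1}}$. The relation $QP=PQ^{-1}$ gives $\rho(Q)\bigl(\rho(P)v\bigr)=\zeta^{-k}\rho(P)v$, and comparing $\rho(z)v=\zeta^{k\cdot2^{a-2}}v=(-1)^{k}v$ with $\rho(z)=-I$ shows that $k$ is odd. Since $a\geq3$ and $k$ is odd, $2^{a-2}\nmid k$, so $\zeta^{k}\neq\zeta^{-k}$ and $v,\rho(P)v$ are linearly independent; their span is invariant under $\rho(Q)$ and under $\rho(P)$ (because $\rho(P)^{2}v=\rho(z)v=-v$), so by irreducibility it is all of $V$ and $\dim V=2$. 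In the ordered basis $(v,\rho(P)v)$ the matrices of $\rho(P)$ and $\rho(Q)$ are exactly $\alpha_{k}(P)$ and $\alpha_{k}(Q)$. Reducing $k$ modulo $2^{a-1}$, and if necessary replacing it by $-k$ (which yields an equivalent representation — conjugate by $\alpha_{k}(P)$, or compare characters), I may take $k$ to lie in $\{1,3,\dots,2^{a-2}-1\}$, a set of $2^{a-3}$ odd residues.

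It then remains to check that this list is correct and irredundant. For each odd $k$ in the stated range, $\alpha_{k}$ satisfies the defining relations \eqref{E:Qgroup}; it is irreducible because $\alpha_{k}(Q)$ has the two distinct eigenvalues $\zeta^{\pm k}$ and $\alpha_{k}(P)$ interchanges the corresponding eigenlines; and it is without fixed points because a power $Q^{j}$ with $0<j<2^{a-1}$ has no eigenvalue $1$ (as $k$ is a unit modulo $2^{a-1}$), while every element $PQ^{j}$ has trace $0$ and determinant $1$, hence eigenvalues $\pm i$. Pairwise nonequivalence is then immediate: $\operatorname{tr}\alpha_{k}(Q)=2\cos(2\pi k/2^{a-1})$, and cosine is injective on $(0,\pi)$, so distinct $k\in\{1,\dots,2^{a-2}-1\}$ give distinct values.

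I expect the one point requiring genuine care to be the index bookkeeping in the second step: confirming that the oddness of $k$ together with the identification $k\sim-k\pmod{2^{a-1}}$ collapses the a priori range of $k$ to precisely the $2^{a-3}$ representatives $1,3,\dots,2^{a-2}-1$, with nothing repeated and nothing missed; everything else is a short computation with $2\times2$ matrices. An alternative route to completeness is via Clifford theory — inducing characters of the index-$2$ cyclic subgroup $\langle Q\rangle$ and counting dimensions against $\sum(\dim)^{2}=2^{a}$ — but the eigenvector argument above has the advantage of delivering the normal form $\alpha_{k}$ on the nose.
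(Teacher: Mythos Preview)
Your argument is correct. The paper itself supplies no proof of this lemma at all --- it simply cites \cite{Wolf}, Lemma~5.6.2 --- so there is nothing to compare your approach against; you have produced a self-contained proof where the paper offers only a reference. Your route via the central involution $z=P^2$, the eigenvector computation forcing $\dim V=2$, and the character check for pairwise nonequivalence is the standard direct argument and is cleanly executed; the bookkeeping you flag (oddness of $k$ together with $k\sim -k$ reducing to exactly $2^{a-3}$ representatives) is also correct as written.
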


\emph{Generalized binary tetrahedral group} $T_{v}^{*}$, $v\geq 1$,
has generators $X$, $P$, $Q$, and relations
$$X^{3^v}=P^4=1\,,\; P^2=Q^2\,,$$
$$XPX^{-1}=Q\,,\; XQX^{-1}=PQ\,,\; PQP^{-1}=Q^{-1}\,.$$
The usual binary tetrahedral group $T^*$ is $T_{1}^{*}$. $T_{v}^{*}$ has
order $8\cdot 3^v$.
\begin{lemma}[\cite{Wolf}, Lemma~7.1.3]\label{L:Tgrpreprs}
The binary tetrahedral group $T^*$ has only one irreducible complex
representation without fixed points. This is the standard representation
$\tau\colon T^*\to SU(2)$ obtained from the tetrahedral group $T<SO(3)$
and the double covering $SU(2)\to SO(3)$. If $v>1$, then $T_{v}^{*}$ has
exactly $2\cdot 3^{v-1}$ pairwise nonequivalent irreducible complex
representations without fixed points $\tau_k$, where $1\leq k< 3^v$,
$(k,3)=1$, given by
$$
\tau_k(X)=-\frac{1}{2} e^{2\pi ik/3^v}
\begin{pmatrix}
   1+i & 1+i \\
   -1+i & 1-i
\end{pmatrix},
$$
$$
\tau_k(P)=
\begin{pmatrix}
   i & 0 \\
   0 & -i
\end{pmatrix},\;
\tau_k(Q)=
\begin{pmatrix}
   0 & 1 \\
   -1 & 0
\end{pmatrix}.
$$
The representation $\tau$ of $T^*$ is also given by these formulae if
we formally set $v=0$.
\end{lemma}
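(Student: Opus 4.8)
The plan is to run Clifford theory with respect to the normal subgroup $N=\langle P,Q\rangle$ and then pin everything down explicitly. First I would record that the relations $P^4=1$, $P^2=Q^2$, $PQP^{-1}=Q^{-1}$ are exactly the relations \eqref{E:Qgroup} with $a=3$, so $N\cong Q8$; that $N$ is normal in $G:=T_v^*$ because $X$ conjugates $P\mapsto Q\mapsto PQ$; and that $G/N$ is cyclic of order $3^v$, generated by the image of $X$ (this, together with $|N|=8$, is forced by $|G|=8\cdot 3^v$). The element $z:=P^2=Q^2$ is central of order $2$ and is the unique involution of $N$. Now let $\rho$ be an irreducible complex representation of $G$ without fixed points. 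Since $\rho(z)$ has order dividing $2$ and $1$ is not an eigenvalue, $\rho(z)=-\mathrm{Id}$. By Clifford's theorem $\rho|_N$ is isotypic along a single $G$-orbit of irreducible characters of $N$; as $z$ is central each of them sends $z$ to $-\mathrm{Id}$, and the only irreducible character of $Q8$ with that property is the $2$-dimensional one $\beta$ (the four linear characters kill $z$). Hence $\rho|_N=e\beta$, and standard Clifford theory for cyclic quotients (the obstruction to extending $\beta$ to $G$ lives in $H^2(G/N;\C^\times)=0$) gives that $\beta$ extends and that $e=1$; thus $\dim\rho=2$, the irreducibles of $G$ over $\beta$ are the $3^v$ twists of a fixed extension of $\beta$ by the characters of $G/N$, and in particular they are pairwise inequivalent and at most $3^v$ in number.

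Next I would make $\rho$ explicit. After conjugation I may take $\rho|_N$ to be given by the matrices $\rho(P)=\begin{pmatrix} i&0\\0&-i\end{pmatrix}$, $\rho(Q)=\begin{pmatrix} 0&1\\-1&0\end{pmatrix}$ of the statement, the $2$-dimensional irreducible representation of $Q8$ being unique up to equivalence (cf.\ Lemma~\ref{L:Qgrpreprs} with $a=3$). Then $\rho(X)$ must conjugate $\rho(P)$ into $\rho(Q)=\rho(XPX^{-1})$ and $\rho(Q)$ into $\rho(P)\rho(Q)=\rho(XQX^{-1})$, and by Schur's lemma these two requirements determine $\rho(X)$ up to a scalar, namely $\rho(X)=c\,M_0$ with $M_0=\begin{pmatrix}1&1\\i&-i\end{pmatrix}$; note $(1+i)M_0=\begin{pmatrix}1+i&1+i\\-1+i&1-i\end{pmatrix}$ is the matrix occurring in the lemma. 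Conversely, for $\rho(X)=cM_0$ all defining relations of $G$ are automatic except possibly $X^{3^v}=1$. The key computation is $M_0^3=(2+2i)\,\mathrm{Id}$; it turns the relation $X^{3^v}=1$ into $c^{3^v}(2+2i)^{3^{v-1}}=1$, an equation with exactly $3^v$ solutions $c=c_k:=-\tfrac12 e^{2\pi ik/3^v}(1+i)$, $k=0,\dots,3^v-1$ (verified using $(1+i)^4=-4$). These are precisely the representations $\tau_k$, and they exhaust the irreducibles of $G$ over $\beta$.

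It then remains to single out the $\tau_k$ without fixed points, and here I would use that a representation has no fixed points iff $1$ is not an eigenvalue of the image of any element of prime order; in $G$ those orders are $2$ (only $z$, harmless) and $3$. Since the Sylow $3$-subgroups of $G$ are the cyclic order-$3^v$ conjugates of $\langle X\rangle$, every element of order $3$ is conjugate to $X^{\pm 3^{v-1}}$, so $\tau_k$ is without fixed points iff $\tau_k(X^{3^{v-1}})=(c_kM_0)^{3^{v-1}}$ has no eigenvalue $1$. For $v\ge2$ one has $M_0^{3^{v-1}}=(M_0^3)^{3^{v-2}}=(2+2i)^{3^{v-2}}\mathrm{Id}$, whence $\tau_k(X^{3^{v-1}})=c_k^{3^{v-1}}(2+2i)^{3^{v-2}}\mathrm{Id}=e^{2\pi ik/3}\mathrm{Id}$; this differs from $\mathrm{Id}$ exactly when $3\nmid k$, which gives the $2\cdot 3^{v-1}$ representations $\tau_k$ with $1\le k<3^v$, $(k,3)=1$, pairwise inequivalent by the first paragraph. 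For $v=1$, $X$ itself has order $3$, so $\tau_k(X)=c_kM_0$ has order dividing $3$ and is non-scalar (a scalar would force $\rho(P)=\rho(Q)$); its two eigenvalues are then distinct cube roots of unity, so $\tau_k$ is without fixed points iff $\det\tau_k(X)=1$. Since $\det(c_kM_0)=c_k^2\det M_0=e^{4\pi ik/3}$, this holds iff $3\mid k$, i.e.\ only for $\tau=\tau_0$ with $\tau(X)=-\tfrac12(1+i)M_0$; and $\det\tau=1$ places $\tau(T^*)$ in $SL(2,\C)$, hence (finiteness) in a conjugate of $SU(2)$, with $\tau(T^*)/\{\pm\mathrm{Id}\}\cong T^*/\langle z\rangle$ the order-$12$ subgroup of $SO(3)$, i.e.\ the tetrahedral group, which identifies $\tau$ with the asserted standard representation.

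The routine work — the powers of $M_0$, the identity $(1+i)^4=-4$, checking the relations — is elementary $2\times2$ linear algebra. The one genuinely delicate point, which I expect to be the main obstacle to a clean statement, is that the no-fixed-points criterion takes different shapes in the two cases $v=1$ and $v>1$: for $v>1$ the relevant order-$3$ element maps to a scalar matrix and the condition is the congruence $3\nmid k$, whereas for $v=1$ it maps to a non-scalar matrix and one must instead track a determinant, which is exactly why the surviving representation of $T^*$ sits at $k\equiv 0$ rather than at $k$ prime to $3$.
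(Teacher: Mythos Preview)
Your argument is correct and considerably more detailed than what the paper offers: the paper's proof is a two-line deferral to \cite{Wolf} together with the remark that ``the precise matrices are not given there, but their computation is an easy exercise which we omit.'' You, by contrast, supply a self-contained proof via Clifford theory for the normal subgroup $\langle P,Q\rangle\cong Q8$, extract the explicit matrices from Schur's lemma and the relation $M_0^3=(2+2i)\mathrm{Id}$, and then determine the fixed-point-free condition by tracking the order-$3$ elements. All the computations check out, including the key dichotomy you flag at the end: for $v\ge 2$ the element $X^{3^{v-1}}$ acts by the scalar $e^{2\pi ik/3}$, giving the condition $3\nmid k$, whereas for $v=1$ the element $X$ acts non-scalarly and one must instead use $\det\tau_k(X)=e^{4\pi ik/3}$, singling out $k\equiv 0$. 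This nicely explains the slightly awkward ``formally set $v=0$'' clause in the statement. The only minor point one might add is a word on why \emph{all} non-identity elements (not just those of prime order) avoid the eigenvalue $1$ once the prime-order ones do; you invoke this reduction without comment, but it is of course immediate since any $g\ne 1$ has a power of prime order sharing any fixed vector of $g$.
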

\begin{proof}
The lemma is proved in \cite{Wolf}, only the precise matrices are not given
there. But their computation is an easy exercise which we omit.
\end{proof}

\emph{Generalized binary octahedral group} $O_{v}^{*}$, $v\geq 1$, has
generators $X$, $P$, $Q$, $R$, and relations
$$X^{3^v}=P^4=1\,,\; P^2=Q^2=R^2\,,$$
$$PQP^{-1}=Q^{-1}\,,\; XPX^{-1}=Q\,,\; XQX^{-1}=PQ\,,$$
$$RXR^{-1}=X^{-1}\,,\; RPR^{-1}=QP\,,\; RQR^{-1}=Q^{-1}\,.$$
The usual binary octahedral group $O^*$ is $O_{1}^{*}$. $O_{v}^{*}$ has
order $16\cdot 3^v$.
\begin{lemma}[\cite{Wolf}, Lemma~7.1.5]\label{L:Ogrpreprs}
The group $O^*$ has exactly $2$ irreducible complex representations
without fixed points, say $o_1$ and $o_2$. Images of $o_1$ and $o_2$ are
conjugate subgroups of $SU(2)$. For $o_1$ we may take the standard
representation obtained from the octahedral group $O<SO(3)$ and the double
covering $SU(2)\to SO(3)$. We may set $o_1(X)=\tau(X)$, $o_1(P)=\tau(P)$,
$o_1(Q)=\tau(Q)$ (see Lemma~\ref{L:Tgrpreprs}), and
$$o_1(R)=\frac{1}{\sqrt{2}}
\begin{pmatrix}
   1+i & 0 \\
   0 & 1-i
\end{pmatrix}.
$$
If $v>1$, then $O_{v}^{*}$ has exactly $3^{v-1}$
pairwise nonequivalent irreducible complex representations without
fixed points $o_k$, where $1\leq k<3^v$, $k\equiv 1(\mod 3)$, induced
by the representations $\tau_k$ (see Lemma~\ref{L:Tgrpreprs}) of the 
subgroup $T_{v}^{*}=\langle X,P,Q \rangle$, i.~e., $o_k$ has dimension 
$4$ and is given by
$$o_k(X)=
\begin{pmatrix}
   \tau_k(X) & 0 \\
   0 & \tau_k(X^{-1})
\end{pmatrix},\;
o_k(P)=
\begin{pmatrix}
   \tau_k(P) & 0 \\
   0 & \tau_k(QP)
\end{pmatrix},
$$
$$
o_k(Q)=
\begin{pmatrix}
   \tau_k(Q) & 0 \\
   0 & \tau_k(Q^{-1})
\end{pmatrix},\;
o_k(R)=
\begin{pmatrix}
   0 & 1 \\
   \tau_k(R^2) & 0
\end{pmatrix}.
$$
\end{lemma}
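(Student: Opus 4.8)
The plan is to exploit the normal subgroup $T_v^*=\langle X,P,Q\rangle\vartriangleleft O_v^*$ of index $2$: from the relations $RXR^{-1}=X^{-1}$, $RPR^{-1}=QP$, $RQR^{-1}=Q^{-1}$ and $R^2=P^2$ all lie in $T_v^*$. Since the restriction of a representation without fixed points is again without fixed points, and the irreducible representations without fixed points of $T_v^*$ are exactly the $\tau_k$ of Lemma~\ref{L:Tgrpreprs}, I would apply the standard index-$2$ Clifford theory: each $\tau_k$ either extends to $O_v^*$ in exactly two ways — when the conjugate representation $\tau_k^{R}\colon h\mapsto\tau_k(RhR^{-1})$ is isomorphic to $\tau_k$ — or induces an irreducible $4$-dimensional representation of $O_v^*$, with $\mathrm{Ind}\,\tau_k\cong\mathrm{Ind}\,\tau_k^{R}$; and conversely, any irreducible representation of $O_v^*$ without fixed points restricts on $T_v^*$ to a single $\tau_k$ (first case) or to $\tau_k\oplus\tau_k^{R}$ (second case). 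So the first step is to identify $\tau_k^{R}$: it is again irreducible and without fixed points, hence equal to some $\tau_{k'}$, and comparing the eigenvalues of $\tau_k^{R}(X)=\tau_k(X^{-1})$ with those of $\tau_{k'}(X)$ read off from the matrix in Lemma~\ref{L:Tgrpreprs} forces $k'\equiv -k\pmod{3^v}$.

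For $v=1$ the group $T^*$ has a \emph{unique} irreducible representation without fixed points $\tau$, so automatically $\tau^{R}\cong\tau$ and $\tau$ extends; Clifford theory then gives exactly two extensions $o_1,o_2$, while $\mathrm{Ind}_{T^*}^{O^*}\tau=o_1\oplus o_2$ is reducible, so these are all the fixed-point-free irreducible representations of $O^*$. One identifies $o_2\cong o_1\otimes\varepsilon$, where $\varepsilon\colon O^*\to\{\pm1\}$ is the surjection with kernel $T^*$ (note $\varepsilon|_{T^*}=1$, so $o_1\otimes\varepsilon$ indeed restricts to $\tau$). Both $o_1,o_2$ are faithful — a nontrivial kernel would meet $T^*$ trivially, hence have order $2$, hence equal the center $\{1,-1\}$, but $-1\in T^*$ — so each is (a conjugate of) a finite subgroup of $SU(2)$ and therefore acts without fixed points; that their images are conjugate in $SU(2)$ follows from Lemma~\ref{L:automorphism} applied to the automorphism $x\mapsto\varepsilon(x)\,x$ of $O^*$, which carries $o_1$ to $o_2$, after unitarizing the conjugating matrix. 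The matrix $o_1(R)$ is then pinned down by requiring that it extend $\tau$ compatibly with $R^2=P^2$, $RXR^{-1}=X^{-1}$, $RPR^{-1}=QP$, $RQR^{-1}=Q^{-1}$; equivalently one realizes $O^*$ among the unit quaternions, so that $T^*$ becomes $2A_4$ and $R$ corresponds to $(1+i)/\sqrt2$, giving $o_1(R)=\tfrac1{\sqrt2}\operatorname{diag}(1+i,1-i)$.

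For $v>1$ we have $\tau_k^{R}\cong\tau_{-k}$, and $\tau_{-k}\not\cong\tau_k$ because $(k,3)=1$ rules out $2k\equiv 0\pmod{3^v}$; hence every $\tau_k$ induces an irreducible $4$-dimensional representation $o_k:=\mathrm{Ind}_{T_v^*}^{O_v^*}\tau_k$, with $o_k\cong o_{k'}$ precisely when $\{k',-k'\}\equiv\{k,-k\}\pmod{3^v}$. Writing out the induced representation for the coset decomposition $O_v^*=T_v^*\sqcup RT_v^*$ and using $RXR^{-1}=X^{-1}$, $RPR^{-1}=QP$, $RQR^{-1}=Q^{-1}$ and $\tau_k(R^2)=\tau_k(P^2)=-I_2$ reproduces the block matrices in the statement. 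Finally, the involution $k\leftrightarrow 3^v-k$ on the $2\cdot3^{v-1}$ admissible values of $k$ has no fixed points, so there are exactly $3^{v-1}$ isomorphism classes, and each pair $\{k,3^v-k\}$ contains a unique representative with $k\equiv 1\pmod 3$ (for which $(k,3)=1$ automatically); combined with the converse part of Clifford theory above (a fixed-point-free irreducible of $O_v^*$ cannot restrict to a single $\tau_k$, since $\tau_k^{R}\not\cong\tau_k$), this yields the stated list.

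The main obstacle is checking that the $4$-dimensional $o_k$ are without fixed points. On elements of $T_v^*$ this is immediate from the block-diagonal shape and the fact that $\tau_k,\tau_{-k}$ are without fixed points. On an element $g=Rh\notin T_v^*$ the matrix $o_k(g)$ is block-antidiagonal, so $o_k(g)^2=o_k(g^2)$ with $g^2\in T_v^*$; an eigenvalue $1$ of $o_k(g)$ would produce an eigenvalue $1$ of $o_k(g^2)$, forcing $g^2=1$. Thus everything reduces to showing that $-1=P^2$ is the only involution of $O_v^*$, which I would verify directly from the identity $(Rh)^2=-(RhR^{-1})h$ for $h\in T_v^*$ together with the structure $T_v^*=Q8\rtimes\langle X\rangle$, where $\langle X\rangle$ acts on $Q8$ through an automorphism of order $3$ — a short finite computation. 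The remaining points, namely matching all signs and normalizations with those of \cite{Wolf} and carrying out the quaternionic computation behind $o_1(R)$, are routine.
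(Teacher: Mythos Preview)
Your approach is correct and is precisely the standard index-$2$ Clifford-theory argument one expects here; note, however, that the paper does not supply its own proof of this lemma --- it simply cites \cite{Wolf}, Lemma~7.1.5, and states the explicit matrices --- so there is nothing substantive to compare against. Your identification $\tau_k^{R}\cong\tau_{-k}$ via the character at $X$, the dichotomy ``extends in two ways / induces irreducibly'', the counting via the fixed-point-free involution $k\mapsto 3^v-k$, and the fixed-point-free check by reducing to the fact that $P^2$ is the unique involution of $O_v^*$ (which follows at once from the Sylow $2$-subgroup being generalized quaternion) are all sound.
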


Recall that the \emph{binary icosahedral group} $I^*$ is obtained
from the group $I$ of rotations of icosahedron by the double
covering $SU(2)\to SO(3)$. For generators of $I$ we may take one of
the rotations by angle $2\pi/5$, which we denote by $V$, a rotation
$T$ of order $2$, whose axis has angle $\pi/3$ with the axis of $V$,
and another rotation $U$ of order $2$ whose axis is perpendicular to
aces of $V$ and $T$. We denote the corresponding generators of $I^*$ by
$\pm V$, $\pm T$, and $\pm U$.
\begin{lemma}[\cite{Wolf}, Lemma~7.1.7]\label{L:Igrpreprs}
The group $I^*$ has only $2$ irreducible complex representations without
fixed points $\iota_1$ and $\iota_{-1}$. They both have dimension $2$ and 
their images are conjugate subgroups of $SU(2)$. For $\iota_1$ we may take 
the standard representation
$$\iota_1(\pm V)=
\begin{pmatrix}
   \pm\varepsilon^3 & 0 \\
   0 & \pm\varepsilon^2
\end{pmatrix},
\iota_1(\pm U)=
\begin{pmatrix}
   0 & \mp 1 \\
   \pm 1 & 0
\end{pmatrix},
$$
$$\iota_1(\pm T)=\frac{1}{\sqrt{5}}
\begin{pmatrix}
   \mp(\varepsilon-\varepsilon^4) & \pm(\varepsilon^2-\varepsilon^3) \\
   \pm(\varepsilon^2-\varepsilon^3) & \pm(\varepsilon-\varepsilon^4)
\end{pmatrix},
$$
where $\varepsilon=e^{2\pi i/5}$.
\end{lemma}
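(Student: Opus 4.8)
The plan is to use the classical identification $I^*\cong SL(2,5)$, a perfect group of order $120$ whose centre is $Z=\{\pm 1\}$, together with its character theory. First I would list the degrees of the irreducible complex representations of $SL(2,5)$: those on which the central element $-1$ acts trivially factor through $A_5$ and have degrees $1,3,3,4,5$, so the remaining irreducible representations, which are necessarily faithful since $A_5$ is simple, have degrees $2,2,4,6$. A representation with nontrivial kernel has $1$ as an eigenvalue of any nonidentity element of the kernel, so it has a fixed point; hence every irreducible representation of $I^*$ without fixed points is faithful, i.e.\ one of these four.

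Next I would handle the two $2$-dimensional representations together. Since $I^*$ is perfect, the determinant character of any representation is trivial, so the image of a $2$-dimensional one lies in $SL(2,\C)$; after averaging a Hermitian product it can be conjugated into $SU(2)$. But every nonidentity element of $SU(2)$ is conjugate to a diagonal matrix $\mathrm{diag}(\lambda,\lambda^{-1})$ with $\lambda\neq 1$, so its eigenvalues differ from $1$; thus both faithful $2$-dimensional representations are without fixed points. I denote them $\iota_1$ and $\iota_{-1}$; they are inequivalent because they take distinct values on the two conjugacy classes of elements of order $5$ (the two classes of unipotents in $SL(2,5)$).

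Then I would show that the faithful irreducibles of degrees $4$ and $6$ do have fixed points, by realising them explicitly. Restricting to $I^*\subset SU(2)$ the symmetric powers of the standard representation $\iota_1$ gives $\mathrm{Sym}^{3}\iota_1$ and $\mathrm{Sym}^{5}\iota_1$, of degrees $4$ and $6$, both sending $-1$ to $-I$ and hence faithful; they are irreducible (either by checking $\langle\chi,\chi\rangle=1$, or because for a finite subgroup of $SU(2)$ the powers $\mathrm{Sym}^{k}$ of the standard representation are irreducible for $k$ small, here $k\le 5$, via the McKay correspondence for $I^*$ and the affine $E_8$ diagram), so they are precisely the remaining two faithful irreducibles. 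Now take $g\in I^*$ of order $3$; then $\iota_1(g)\in SU(2)$ has order $3$ and determinant $1$, so its eigenvalues are $\omega,\omega^{-1}$ with $\omega=e^{2\pi i/3}$. Consequently $\mathrm{Sym}^{3}\iota_1(g)$ has eigenvalues $\omega^{3},\omega,\omega^{-1},\omega^{-3}$ and $\mathrm{Sym}^{5}\iota_1(g)$ has eigenvalues $\omega^{5},\omega^{3},\omega,\omega^{-1},\omega^{-3},\omega^{-5}$; both lists contain the value $1=\omega^{\pm 3}$, so the nonidentity element $g$ has eigenvalue $1$ and neither representation is without fixed points. Therefore $\iota_1$ and $\iota_{-1}$ are the only irreducible complex representations of $I^*$ without fixed points, and both have degree $2$.

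It remains to see that their images are conjugate in $SU(2)$ and to produce the matrices. The group $SL(2,5)$ has an outer automorphism (conjugation by a noncentral diagonal element of $GL(2,5)$) that interchanges the two unipotent classes, hence the two classes of elements of order $5$, and therefore interchanges $\iota_1$ and $\iota_{-1}$; by Lemma~\ref{L:automorphism} the images $\iota_1(I^*)$ and $\iota_{-1}(I^*)$ are conjugate in $GL(2,\C)$. Since both are finite, irreducible subgroups of $SU(2)$, the conjugating matrix may be replaced by a unitary one of determinant $1$ (Schur's lemma makes the two pulled-back invariant Hermitian products proportional), so the images are conjugate in $SU(2)$. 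Finally, the explicit formulas are obtained by lifting through the double cover $SU(2)\to SO(3)$ the rotations $V$ (by $2\pi/5$), $U$ and $T$ generating $I<SO(3)$: one writes each rotation as the unit quaternion determined by its axis and angle, reads off the corresponding element of $SU(2)$, and checks that the matrices so produced satisfy the presentation of $I^*$ and that $\iota_1(\pm V)$ has the stated eigenvalues $\pm\varepsilon^{3},\pm\varepsilon^{2}$. This last bit is the only genuinely computational step, and tracking the sign ambiguity in the lift together with the precise angles among the axes of $V$, $U$, $T$ is the main, though mild, obstacle.
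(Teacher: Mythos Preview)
Your argument is correct and considerably more detailed than what the paper actually offers. The paper's own ``proof'' consists of a single sentence: the main assertions are attributed to \cite{Wolf}, Lemma~7.1.7, and for the explicit matrices it simply refers to \cite{Klein}, Chapter~II, \S 6. No independent verification is given.

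Your route, by contrast, is self-contained: you use the identification $I^*\cong SL(2,5)$ and its character table to isolate the four faithful irreducibles of degrees $2,2,4,6$; dispose of the last two by realising them as $\mathrm{Sym}^3\iota_1$ and $\mathrm{Sym}^5\iota_1$ and exhibiting the eigenvalue $1$ on an element of order $3$; and then invoke the outer automorphism of $SL(2,5)$ together with Lemma~\ref{L:automorphism} to get conjugacy of the two images. All of this checks out (including the irreducibility of $\mathrm{Sym}^3\iota_1$ and $\mathrm{Sym}^5\iota_1$, which one can also confirm by a direct character computation giving $\langle\chi,\chi\rangle=1$ in both cases). The only place where your write-up is a bit breezy is the passage from conjugacy in $GL(2,\C)$ to conjugacy in $SU(2)$; your Schur-lemma argument is fine, but it is worth saying explicitly that after rescaling the conjugating matrix to lie in $U(2)$ one may further multiply by a scalar of modulus~$1$ to land in $SU(2)$, since scalars centralise everything.

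In short: the paper outsources the lemma to Wolf and Klein, whereas you actually prove it. Your approach buys a proof that does not depend on chasing references; the paper's approach buys brevity.
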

\begin{proof}
The matrices of $\iota_1$ can be found in \cite{Klein}, Chapter II, \S 6.
\end{proof}

\section{Classification of isolated quotient singularities over $\C$}
\label{S:class}
This is the central section of our paper. Here we state the main theorems
on classification of IQS over the field $\C$ following \cite{Wolf}, 
Chapters~6 and 7.

As it was said in Section~\ref{S:prelim}, first we have to classify
finite groups with $pq$-conditions. According to \cite{Wolf}, a complete
abstract classification exists only for solvable groups. It is given below
in Theorem~\ref{T:solvgrps}. This is a purely group-theoretic result which
can be useful in classification of IQS over any field. 
Theorem~\ref{T:nonsolvgrps} classifies nonsolvable groups with 
$pq$-conditions possessing representations without fixed points over $\C$, 
and the last condition is essentially used. 
\begin{theorem}[\cite{Wolf}, Theorem~6.1.11]\label{T:solvgrps}
Let $G$ be a finite solvable group satisfying all the $pq$-conditions.
Then $G$ is isomorphic to one of the groups listed in 
Table~\ref{Tb:solvgrps}. Moreover, $G$ satisfies an additional condition: 
if $d$ is order of $r$ in the multiplicative group of residues modulo $m$ 
coprime with $m$, then every prime divisor of $d$ divides $n/d$ ($r$, $m$,
and $n$ are defined in Table~\ref{Tb:solvgrps}).
\end{theorem}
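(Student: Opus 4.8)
The plan is to follow the standard Zassenhaus--Vincent--Wolf route, building the group $G$ up from a normal metacyclic piece by a tower of successive extensions, at each stage squeezing the group structure with the $pq$-conditions and Theorem~\ref{T:Sylowsgrps}. First I would reduce to the case where every Sylow subgroup is understood: by Theorem~\ref{T:Sylowsgrps}, the $p^2$-conditions force every Sylow $p$-subgroup ($p$ odd) to be cyclic and every Sylow $2$-subgroup to be cyclic or generalized quaternion. Then I would invoke the structure theory of groups all of whose odd Sylow subgroups are cyclic: such a group has a normal metacyclic Hall subgroup. Concretely, one shows (as in \cite{Wolf}, \S5.4--5.5) that $G$ possesses a normal subgroup $A = \langle a \rangle \rtimes \langle b \rangle$ of the ``type I'' metacyclic shape, presented by $a^m = b^n = 1$, $bab^{-1} = a^r$, with the arithmetic constraints $\gcd((r-1)n, m) = 1$ and $r^n \equiv 1 \pmod m$, and with $d = \operatorname{ord}_m(r)$; the quotient $G/A$ is then a subgroup of a small group (trivial, cyclic of order $\leq 2$, or a binary-octahedral-type group) coming from the $2$-part.

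Next I would carry out the extension analysis in cases according to the Sylow $2$-subgroup. If the Sylow $2$-subgroup is cyclic, $G$ itself is metacyclic and is of type I — this gives the first family in Table~\ref{Tb:solvgrps}. If the Sylow $2$-subgroup is generalized quaternion $Q2^a$, one analyses how $Q2^a$ acts on the odd-order normal subgroup and how the remaining odd part (an odd-order cyclic group acting through a group of order dividing $3$ on the quaternion part) is glued in; the possible gluings are controlled by the requirement that every order-$pq$ subgroup be cyclic, which kills all the ``bad'' actions and leaves exactly the families built from $Q2^a$, from the generalized binary tetrahedral groups $T_v^*$, and from the generalized binary octahedral groups $O_v^*$ (introduced above, and whose presentations encode precisely the admissible extensions). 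For each surviving family I would then verify the $pq$-conditions hold, i.e. that the construction is not just necessary but sufficient; this is a finite check on subgroups of order $pq$, using that the ambient metacyclic or (bi)nary-polyhedral structure forces such subgroups to be cyclic.

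Finally I would prove the ``moreover'' clause: writing $d = \operatorname{ord}_m(r)$, every prime divisor of $d$ divides $n/d$. The idea is that if some prime $\ell \mid d$ but $\ell \nmid n/d$, then inside $A = \langle a,b\rangle$ one can locate, using Cauchy's theorem in the cyclic group $\langle b\rangle$ and the faithfulness of the action of $\langle b\rangle /(\text{kernel})$ on $\langle a\rangle$, a subgroup of order $\ell q$ (for a suitable prime $q \mid m$) that is nonabelian — contradicting the $\ell q$-condition. I expect this last step, together with the bookkeeping of exactly which glued extensions of $Q2^a$ by odd cyclic groups survive all the $pq$-conditions, to be the main obstacle: it is where the purely arithmetic constraints in Table~\ref{Tb:solvgrps} are forced, and it requires a careful simultaneous handling of several primes rather than a single clean argument. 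Since the full verification is long and is carried out in detail in \cite{Wolf}, Chapter~6, I would present the above as the skeleton and refer to \cite{Wolf} for the remaining case-by-case computations.
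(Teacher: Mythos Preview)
The paper does not prove this theorem at all: Theorem~\ref{T:solvgrps} is stated with the attribution ``\cite{Wolf}, Theorem~6.1.11'' and no argument is supplied, not even a sketch. So there is nothing in the paper to compare your proposal against; the author treats the result as an imported black box from Wolf's book and moves on to use it.

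Your outline is, broadly, a plausible summary of how Wolf's Chapter~6 proceeds (Sylow structure via the $p^2$-conditions, a normal metacyclic core, then case analysis on the Sylow $2$-subgroup and the possible extensions), and your final paragraph correctly identifies the mechanism behind the ``every prime divisor of $d$ divides $n/d$'' clause. A few details are loose: the normal subgroup you call $A$ is not quite organized as you describe (in Wolf's treatment one does not simply take a single normal type~I subgroup with small quotient; the types II--IV arise from a more intricate splitting according to whether the odd part acts nontrivially on the quaternion $2$-group, and the quotient $G/A$ is not literally ``a subgroup of a binary-octahedral-type group''), and the sufficiency direction (that every group in the table really satisfies all $pq$-conditions) is more than a ``finite check''. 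But since the paper itself defers entirely to \cite{Wolf}, your proposal already does more than the paper does; if your intent is to match the paper, the correct move is simply to cite \cite{Wolf}, Theorem~6.1.11, and omit any proof.
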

\begin{table}[hbt]
\begin{center}
   \begin{tabular}{|c|p{2.5cm}|p{3.2cm}|p{3.2cm}|c|}
   \hline
   Type & Generators & Relations & Conditions & Order \\ \hline
   I & $A$, $B$ & $A^m=B^n=1$, \newline $BAB^{-1}=A^r$ & $m\geq 1$, 
   $n\geq 1$, \newline $(n(r-1),m)=1$, \newline $r^n\equiv 1(m)$ & $mn$ 
   \\ \hline
   II & $A$, $B$, $R$ & As in I; also \newline 
   $R^2=B^{n/2}$, \newline $RAR^{-1}=A^l$, \newline $RBR^{-1}=B^k$ 
   & As in I; also \newline
   $l^2\equiv r^{k-1}\equiv 1(m)$, \newline $n=2^u v$, $u\geq 2$, \newline
   $k\equiv -1(2^u)$, \newline $k^2\equiv 1(n)$ & $2mn$ \\ \hline
   III & $A$, $B$, $P$, $Q$ & As in I; also \newline $P^4=1$, 
   \newline $P^2=Q^2=(PQ)^2$, \newline $AP=PA$, \newline $AQ=QA$, \newline 
   $BPB^{-1}=Q$, \newline $BQB^{-1}=PQ$
   & As in I; also \newline $n\equiv 1(2)$, \newline $n\equiv 0(3)$ & $8mn$ 
   \\ \hline
   IV & $A$, $B$, $P$, $Q$, $R$ & As in III; also \newline
   $R^2=P^2$, \newline $RPR^{-1}=QP$, \newline $RQR^{-1}=Q^{-1}$, \newline 
   $RAR^{-1}=A^l$, \newline $RBR^{-1}=B^k$
   & As in III; also \newline $k^2\equiv 1(n)$, \newline
   $k\equiv -1(3)$, \newline $r^{k-1}\equiv l^2\equiv 1(m)$ & $16mn$ 
   \\ \hline
   \end{tabular}
\end{center}
\caption{Solvable groups with $pq$-conditions}\label{Tb:solvgrps}
\end{table}
\begin{theorem}[\cite{Wolf}, Theorem~6.3.1]\label{T:nonsolvgrps}
Let $G$ be a nonsolvable finite group. If $G$ admits a representation
without fixed points over $\C$, then $G$ belongs to one of the following
two types.

Type V. $G=K\times I^*$, where $K$ is a solvable group of type I 
(see Theorem~\ref{T:solvgrps}), order of $K$ is coprime with $30$, and
$I^*$ is the binary icosahedral group.

Type VI. $G$ is generated by a normal subgroup $G_1$ of index $2$ and an 
element $S$, where $G_1=K\times I^*$ is of type V and $S$ satisfies
the following conditions. If we identify $I^*$ with the group $SL(2,5)$ of
$2\times 2$ matrices of determinant $1$ over the field $\Z/5$, then $S^2=
-I\in SL(2,5)$, and for any $L\in SL(2,5)$ $SLS^{-1}=\theta(L)$, where
$\theta$ ia an automorphism of $I^*$ given by
$$\theta(L)=
\begin{pmatrix}
   0 & -1 \\
   2 & 0
\end{pmatrix} L
{\begin{pmatrix}
   0 & -1 \\
   2 & 0
\end{pmatrix}}^{-1}\,.
$$
Moreover, any such $L$ belongs to the normalizer of the subgroup $K$. If
$A$ and $B$ are the generators of the group $K$, then $SBS^{-1}=B^k$,
$SAS^{-1}=A^l$, where integers $k$ and $l$ satisfy $l^2\equiv 1(m)$,
$k^2\equiv 1(n)$, $r^{k-1}\equiv 1(m)$ (see Table~\ref{Tb:solvgrps}).

Conversely, every group of type V or VI is nonsolvable and admits 
representations without fixed points over $\C$.
\end{theorem}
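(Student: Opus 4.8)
The plan is to use the fixed-point-free hypothesis together with the $pq$-conditions to show that $G$ contains a normal copy of the binary icosahedral group $I^*$, and then to control the quotient, which will turn out to have order $1$ or $2$. By Theorem~\ref{T:pq}, $G$ satisfies all the $pq$-conditions, so by Theorem~\ref{T:Sylowsgrps} every abelian subgroup of $G$ is cyclic and the Sylow $2$-subgroups of $G$ are cyclic or generalized quaternion. Let $P$ be the last nontrivial term of the derived series of $G$: it is characteristic, hence normal, it is perfect, and it is nontrivial since $G$ is nonsolvable. Restricting a fixed-point-free representation of $G$ to $P$ shows that $P$ is a finite perfect group with a fixed-point-free representation over $\C$, so by Zassenhaus's theorem quoted above $P\cong I^*$; thus $I^*\vartriangleleft G$. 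Since the Sylow $2$-subgroup $Q8$ of $I^*$ is non-cyclic, the Sylow $2$-subgroups of $G$ must be generalized quaternion.

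\emph{The centralizer.} Set $C=C_G(I^*)\vartriangleleft G$. As $I^*/Z(I^*)\cong A_5$ is simple nonabelian with $|Z(I^*)|=2$, we get $C\cap I^*=Z(I^*)=:\langle-1\rangle$, and $-1\in Z(G)$ because it is fixed by every automorphism of $I^*$. If $C$ were nonsolvable, the argument above applied to $C$ would produce a copy $J^*\cong I^*$ inside $C$; then elements of order $5$ in $I^*$ and in $J^*$ commute and, since $J^*\cap I^*\le\langle-1\rangle$ is prime to $5$, generate a subgroup $\cong\Z/5\times\Z/5$, contradicting Theorem~\ref{T:Sylowsgrps}. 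So $C$ is solvable and satisfies all the $pq$-conditions, hence is of one of the types I--IV of Theorem~\ref{T:solvgrps}. Applying the $2^2$-condition to a Sylow $2$-subgroup of $C$ together with a copy of $Q8$ in $I^*$ centralizing it forces that Sylow $2$-subgroup to be $\langle-1\rangle$, which excludes types II--IV (whose orders are divisible by $4$); thus $C$ is of type I, and the $3^2$- and $5^2$-conditions (using elements of order $3$ and $5$ in $I^*$) show that $|C|$ is prime to $3$ and $5$. Since $C$ has cyclic Sylow $2$-subgroup of order $2$ it has a normal $2$-complement $K$, and as $-1\in Z(G)$ in fact $C=K\times\langle-1\rangle$ with $K$ of type I and $(|K|,30)=1$.

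\emph{The quotient.} Put $G_1=I^*C$; since $K\cap I^*=1$ and $K$ centralizes $I^*$, actually $G_1=K\times I^*\vartriangleleft G$, a group of type V. Conjugation embeds $G/C$ into $\operatorname{Aut}(I^*)$; because $I^*$ is the universal central extension of the perfect group $A_5$, one has $\operatorname{Aut}(I^*)\cong\operatorname{Aut}(A_5)\cong S_5$, while $G_1/C\cong A_5$ is the inner part. Hence $G/C$ is $A_5$ or $S_5$, i.e. $[G:G_1]\in\{1,2\}$. If the index is $1$ then $G=K\times I^*$ is of type V. If the index is $2$, a Sylow $2$-subgroup $T$ of $G$ is generalized quaternion with $T\cap G_1$ a Sylow $2$-subgroup of $I^*$ (recall $|K|$ is odd), so $T\cong Q16$ and $T\cap G_1\cong Q8$; choosing $S\in T\setminus G_1$ of order $4$ gives $S^2=-1$, the unique involution, and $G=\langle G_1,S\rangle$. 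As $K$ is a normal Hall subgroup of $G_1$ it is characteristic there and normal in $G$, so $S$ normalizes $K$; writing $SAS^{-1}=A^l$, $SBS^{-1}=B^k$ and using that conjugation by $S$ is an involution ($S^2=-1\in Z(G)$) together with $BAB^{-1}=A^r$ gives $l^2\equiv1\pmod m$, $k^2\equiv1\pmod n$ and $r^{k-1}\equiv1\pmod m$; finally conjugation by $S$ acts on $I^*$ as an outer automorphism of order $2$, which up to an inner automorphism absorbed into the choice of $S$ is the automorphism $\theta$ of the statement, with $S^2=-1$ corresponding to $-I\in SL(2,5)$. This is precisely type VI.

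\emph{Converse and the main obstacle.} Both types are nonsolvable: a type-V group maps onto $A_5$ and a type-VI group contains such a group. For a type-V group take the external tensor product $\sigma\boxtimes\iota_1$, where $\sigma$ is a fixed-point-free irreducible representation of the type-I group $K$ (such representations exist; see the classification of representations of metacyclic groups in the next section) and $\iota_1$ is the $2$-dimensional fixed-point-free representation of $I^*$ of Lemma~\ref{L:Igrpreprs}; since $(|K|,|I^*|)=1$, an eigenvalue $1$ of $(\sigma\otimes\iota_1)(a,x)$ forces $\sigma(a)$ and $\iota_1(x)$ each to have eigenvalue $1$, hence $a=x=1$. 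For a type-VI group one extends this representation from $G_1$ to $G$, or induces it, according to whether $\iota_1\circ\theta$ is equivalent to $\iota_1$ or to $\iota_{-1}$ (Lemma~\ref{L:Igrpreprs}), and checks that the result is irreducible and fixed-point-free; on the coset $G\setminus G_1$ this uses $S^2=-1\mapsto-I$, whose eigenvalues $\pm i$ are not $1$. The serious input, Zassenhaus's theorem on $I^*$, is already available; the genuinely delicate points are the rigidity in the index-$2$ case — that $S$ may be chosen with $S^2=-I$ and conjugation by $S$ restricting to exactly the automorphism $\theta$ of $I^*$, with precisely the listed congruences on $k$ and $l$ — and, on the converse side, verifying the fixed-point-freeness of the extended or induced representation of a type-VI group on the nontrivial coset.
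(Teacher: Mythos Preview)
The paper does not give its own proof of this theorem; it is simply quoted from \cite{Wolf}, Theorem~6.3.1, with no argument supplied in the present text. So there is no in-paper proof to compare your sketch against.

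That said, your outline is essentially the standard argument (the one in Wolf): identify the perfect residual with $I^*$ via Zassenhaus's theorem, analyse $C=C_G(I^*)$ to extract the type~I factor $K$, use $\operatorname{Aut}(I^*)\cong S_5$ to get $[G:G_1]\in\{1,2\}$, and in the index-$2$ case pick $S$ of order $4$ inside a Sylow $2$-subgroup $Q16$. The skeleton is correct, and the points you flag as ``delicate'' are indeed where the work lies. Two places deserve a little more than you wrote. First, the claim $SBS^{-1}=B^k$ (rather than $A^{?}B^k$) is not automatic: one uses that $\langle A\rangle$ is the Hall subgroup of $K$ of order $m$ (since $(m,n)=1$), hence characteristic, to get $SAS^{-1}=A^l$, and then adjusts the choice of generator $B$ to absorb the $A$-part; the congruence $r^{k-1}\equiv 1\pmod m$ comes from comparing $S(BAB^{-1})S^{-1}$ computed two ways. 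Second, for the converse in type~VI your dichotomy ``extend or induce'' collapses: since $\theta$ is outer, $\iota_1\circ\theta\simeq\iota_{-1}\not\simeq\iota_1$, so one always induces (this matches the $4d$-dimensional representations $\varkappa_{k',l',j}$ in the List), and fixed-point-freeness on the nontrivial coset needs a short eigenvalue computation for elements $gS$, not just for $S$ itself. Your type~V tensor argument is fine once stated precisely: an eigenvalue of $\sigma(a)\otimes\iota_1(x)$ is a product $\lambda\mu$ with $\lambda^{|K|}=\mu^{120}=1$, and $(|K|,120)=1$ forces $\lambda=\mu=1$.
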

\begin{remark}\label{R:qgroup}
All groups listed in Theorems~\ref{T:solvgrps} and \ref{T:nonsolvgrps}
with the exception of groups of type I contain a generalized quaternion
group $Q2^a$ as a subgroup. This follows from Theorem~\ref{T:Sylowsgrps}
and Theorem~5.4.1 of \cite{Wolf}, the last stating that if all Sylow 
subgroups of a finite group $G$ are cyclic, then $G$ is a group of type I.
\end{remark}
\begin{remark}
The Kleinian subgroups of $SL(2,\C)$ are certainly among the groups
of types I -- VI. The cyclic group $\Z/n$ belongs to type I ($m=1$).
The binary dihedral group $D_{b}^{*}$ belongs to type I for odd $b$
($m=b$, $n=4$, $r=-1$), and to type II for even $b$ ($m=1$, $n=2b$,
$k=-1$). The binary tetrahedral group $T^*$ belongs to type III 
($m=1$, $n=3$). The binary octahedral group $O^*$ belongs to type IV
($m=1$, $n=3$, $k=-1$), and the binary icosahedral group to type V
($K=\{1\}$).
\end{remark}
\begin{remark}
It would be useful to write the action of the automprphism $\theta$ from
Theorem~\ref{T:nonsolvgrps} in terms of matrices $\iota_1(\pm V)$,
$\iota_1(\pm U)$, and $\iota_1(\pm T)$ of the standard representation
$\iota_1$ of the group $I^*$. Let us identify the image of this 
representation with $I^*$. Note that matrices $-V$ and $-T$ alone
generate $I^*$. We can fix an isomorphism with $SL(2,5)$ by setting,
for example,
$$-V \leftrightarrow
\begin{pmatrix}
-1 & -1 \\
0  & -1
\end{pmatrix},\:
-T \leftrightarrow
\begin{pmatrix}
2 & 0 \\
-1 & 3
\end{pmatrix}.
$$
Then $\theta$ is given by
$$\theta(-V)=\frac{1}{5}
\begin{pmatrix}
1-\varepsilon+2\varepsilon^2-2\varepsilon^4 & -2+2\varepsilon+
\varepsilon^2-\varepsilon^4 \\
2+\varepsilon-\varepsilon^3-2\varepsilon^4 & 1-2\varepsilon+
2\varepsilon^3-\varepsilon^4
\end{pmatrix},\:
\theta(-T)=
\begin{pmatrix}
0 & -\varepsilon \\
\varepsilon^4 & 0
\end{pmatrix},
$$
where $\varepsilon=e^{2\pi i/5}$.
\end{remark}

Next we need a classification of irreducible complex representations
without fixed points of groups from Theorems~\ref{T:solvgrps} and
\ref{T:nonsolvgrps}. The reference for this classification is \cite{Wolf},
Section~7.2.  Let us give a list of such representations for all types
I -- VI. The set of all irreducible complex representations without
fixed points of a given group $G$ is denoted by $\mathfrak{F}_{\C}(G)$,
$\varphi$ denotes here the Euler function. We also use the notation 
introduced in Theorems~\ref{T:solvgrps}, \ref{T:nonsolvgrps}, and 
Table~\ref{Tb:solvgrps}.

\begin{center}
\textbf{The list of irreducible complex representations \\
without fixed points}
\end{center}

\emph{Type I.} Let $G$ be a group of type I. Recall that $d$ is order of
$r$ in the group of residues modulo $m$ coprime with $m$. Then the set
$\mathfrak{F}_{\C}(G)$ consists of the following $\varphi(mn)/d^2$
representations $\pi_{k,l}$ of dimension $d$:
$$\pi_{k,l}(A)=
\begin{pmatrix}
   e^{2\pi ik/m} & 0 & \cdots & 0 \\
   0 & e^{2\pi ikr/m} & \cdots & 0 \\
   \vdots & \vdots & \ddots & \vdots \\
   0 & 0 & \cdots & e^{2\pi ikr^{d-1}/m}
\end{pmatrix},
$$
$$
\pi_{k,l}(B)=
\begin{pmatrix}
   0 & 1 & \cdots & 0 \\
   \vdots & \vdots & \ddots & \vdots \\
   0 & 0 & \cdots & 1 \\
   e^{2\pi il/n'} & 0 & \cdots & 0
\end{pmatrix}.
$$
Here $(k,m)=(l,n)=1$, $n'=n/d$.

\emph{Type II.} Let $G$ be a group of type II. Then it contains a subgroup
$\langle A,B\rangle$ of type I. All representations without fixed
points of the group $G$ are induced from the representations of the
subgroup $\langle A,B\rangle$. Namely, $\mathfrak{F}_{\C}(G)$ consists of
the following $\varphi(2mn)/(2d)^2$ representations $\alpha_{k',l'}$
of dimension $2d$:
$$\alpha_{k',l'}(A)=
\begin{pmatrix}
   \pi_{k',l'}(A) & 0 \\
   0 & \pi_{k',l'}(A^l)
\end{pmatrix},\;
\alpha_{k',l'}(B)=
\begin{pmatrix}
   \pi_{k',l'}(B) & 0 \\
   0 & \pi_{k',l'}(B^k)
\end{pmatrix},
$$
$$\alpha_{k',l'}(R)=
\begin{pmatrix}
   0 & I \\
   \pi_{k',l'}(B^{n/2}) & 0
\end{pmatrix}.
$$
Here $(k',m)=(l',n)=1$, $k$ and $l$ are defined in row II of 
Table~\ref{T:solvgrps}, and $I$ is the unit $d\times d$ matrix.

\emph{Type III.} Let $G$ be a group of type III. Then it contains a
subgroup $\langle A,B\rangle$ which is a group of type I of odd order.
We have to distinguish $3$ cases.

\textsc{Case 1.} $9\nmid n$, in particular, $3\nmid d$. In this case 
$$G=\langle A,B^3\rangle \times \langle B^{n''},P,Q\rangle\,,$$
where $n''=n/3$, $\langle B^{n''},P,Q\rangle$ is the binary tetrahedral
group $T^*$, and $\langle A,B^3\rangle$ is a group of type I with the same
value of $d$ as the group $\langle A,B\rangle$. Then $\mathfrak{F}_{\C}(G)$
consists of the following $\varphi(mn)/2d^2$ representations $\nu_{k,l}$
of dimension $2d$:
$$\nu_{k,l}=\pi_{k,l}\otimes\tau\,,$$
where $\pi_{k,l}\in \mathfrak{F}_{\C}(\langle A,B^3\rangle)$, and $\tau$
is the only irreducible representation without fixed points of the
group $T^*$, see Lemma~\ref{L:Tgrpreprs}.

\textsc{Case 2.} $9| n$, $3\nmid d$. Let $n=3^vn''$, where $(3,n'')=1$ and 
$v>1$. Then 
$$G=\langle A,B^{3^v}\rangle \times \langle B^{n''},P,Q\rangle=
\langle A,B^{3^v}\rangle \times T_{v}^{*}\,,$$
where $\langle A,B^{3^v}\rangle$ is a group of type I with the same value
of $d$ as $\langle A,B\rangle$, and $T_{v}^{*}$ is a generalized tetrahedral
group. Then $\mathfrak{F}_{\C}(G)$ consists of the following 
$\varphi(mn)/d^2$ representations $\nu_{k,l,j}$ of dimension $2d$:
$$\nu_{k,l,j}=\pi_{k,l}\otimes\tau_j\,,$$
where $\pi_{k,l}\in\mathfrak{F}_{\C}(\langle A,B^{3^v}\rangle)$, and
$\tau_j$ are defined in Lemma~\ref{L:Tgrpreprs}.

\textsc{Case 3.} $9|n$ and $3|d$. Then $G$ contains a normal subgroup of 
index $d$
$$\langle A,B^d,P,Q\rangle=\langle A\rangle\times\langle B^d\rangle\times
\langle P,Q\rangle\,,$$
where $\langle P,Q\rangle$ is the quaternion group $Q8$. Irreducible
representations without fixed points of $G$ are induced from irreducible
representations without fixed points of the subgroup $\langle A,B^d,P,Q
\rangle$. The set $\mathfrak{F}_{\C}(G)$ consists of the following
$\varphi(mn)/d^2$ representations $\mu_{k,l}$ of dimension $2d$:
$$\mu_{k,l}(A)=
\begin{pmatrix}
   e^{2\pi ik/m}I_2 & 0 & \cdots & 0 \\
   0 & e^{2\pi ikr/m}I_2 & \cdots & 0 \\
   \vdots & \vdots & \ddots & \vdots \\
   0 & 0 & \cdots & e^{2\pi ikr^{d-1}/m}I_2
\end{pmatrix},
$$
$$\mu_{k,l}(B)=
\begin{pmatrix}
   0 & I_{2d-2} \\
   e^{2\pi il/n'}I_2 & 0
\end{pmatrix},
$$
$$\mu_{k,l}(P)=
\begin{pmatrix}
   \alpha(P) & 0 & \cdots & 0 \\
   0 & \alpha(Q) & \cdots & 0 \\
   \vdots & \vdots & \ddots & \vdots \\
   0 & 0 & \cdots & \alpha(B^{d-1}PB^{1-d})
\end{pmatrix},
$$
$$\mu_{k,l}(Q)=
\begin{pmatrix}
   \alpha(Q) & 0 & \cdots & 0 \\
   0 & \alpha(PQ) & \cdots & 0 \\
   \vdots & \vdots & \ddots & \vdots \\
   0 & 0 & \cdots & \alpha(B^{d-1}QB^{1-d})
\end{pmatrix}.
$$
Here $(k,m)=(l,n)=1$, $n'=n/d$, $I_2$ and $I_{2d-2}$ are the unit matrices 
of dimensions $2\times 2$ and $(2d-2)\times(2d-2)$ respectively, and 
$\alpha$ is the representation of the quaternion group $Q8$ defined in 
Lemma~\ref{L:Qgrpreprs}.

\emph{Type IV.} Let $G$ be a group of type IV. Note that elements $A$,
$B$, $P$, $Q$ of $G$ generate a subgroup of type III. Here we again
have to consider several cases.

\textsc{Case 1.} $9\nmid n$, in particular $3\nmid d$.

\textsc{Subcase 1}a. Assume that there exists an element 
$\pi\in\mathfrak{F}(\langle A,B^3 \rangle)$ ($\langle A,B^3\rangle$ is a 
group of type I), which is equivalent to the representation $\pi'\colon 
g\mapsto \pi(RgR^{-1})$ of the group $\langle A,B^3\rangle$. Then $G$ is a 
direct product 
$$G=\langle A,B^3\rangle\times\langle R,B^{n/3},P,Q\rangle=
\langle A,B^3\rangle\times O^*\,,$$ 
where $O^*$ is the binary octahedral group. Then $\mathfrak{F}_{\C}(G)$
consists of the following $\varphi(mn)/d^2$ representations $\psi_{k,l,j}$
of dimension $2d$:
$$\psi_{k,l,j}=\pi_{k,l}\otimes o_j\,,$$
where $\pi_{k,l}\in\mathfrak{F}_{\C}(\langle A,B^3\rangle)$ and $o_j=o_1$ 
or $o_{-1}$ is a representation of the binary octahedral group $O^*$ 
defined in Lemma~\ref{L:Ogrpreprs}.

\textsc{Subcase 1}b. If the assumption of Subcase 1a is not satisfied, the 
group $G$ is not the direct product of subgroups $\langle A,B^3\rangle$ and
$O^*$. Then $\mathfrak{F}_{\C}(G)$ consists of $\varphi(mn)/4d^2$
representations $\gamma_{k',l'}$ of dimension $4d$ induced by 
representations $\nu_{k',l'}$ of the subgroup $\langle A,B,P,Q\rangle$. 
We have
$$\gamma_{k',l'}(A)=
\begin{pmatrix}
   \nu_{k',l'}(A) & 0 \\
   0 & \nu_{k',l'}(A^l)
\end{pmatrix},\;
\gamma_{k',l'}(B)=
\begin{pmatrix}
   \nu_{k',l'}(B) & 0 \\
   0 & \nu_{k',l'}(B^k)
\end{pmatrix},
$$
$$\gamma_{k',l'}(P)=
\begin{pmatrix}
   \nu_{k',l'}(P) & 0 \\
   0 & \nu_{k',l'}(QP)
\end{pmatrix},\;
\gamma_{k',l'}(Q)=
\begin{pmatrix}
   \nu_{k',l'}(Q) & 0 \\
   0 & \nu_{k',l'}(Q^{-1})
\end{pmatrix},
$$
$$\gamma_{k',l'}(R)=
\begin{pmatrix}
   0 & I_{2d} \\
   \nu_{k',l'}(P^2) & 0
\end{pmatrix}.
$$
Here $(k',m)=(l',n)=1$, $k$ and $l$ are defined in row IV of 
Table~\ref{T:solvgrps}, and $I_{2d}$ is the unit matrix of dimension
$2d\times 2d$.

\textsc{Case 2.} $9 | n$ but $3\nmid d$. Let $n=3^vn''$, $(3,n'')=1$. Then 
the subgroup $\langle R,B^{n''},P,Q\rangle$ of $G$ is isomorphic to the
generalized binary octahedral group $O_{v}^{*}$.

\textsc{Subcase 2}a. Assume that there exists an element 
$\pi\in\mathfrak{F}(\langle A,B^{3^v} \rangle)$ ($\langle A,B^{3^v}\rangle$ 
is a group of type I), which is equivalent to the representation 
$\pi'\colon g\mapsto \pi(RgR^{-1})$ of the group 
$\langle A,B^{3^v}\rangle$. Then $G$ is a direct product 
$$G=\langle A,B^{3^v}\rangle\times\langle R,B^{n''},P,Q\rangle=
\langle A,B^{3^v}\rangle\times O_{v}^{*}\,,$$
and the set $\mathfrak{F}_{\C}(G)$ consists of the following 
$\varphi(mn)/2d^2$ representations $\xi_{k,l,j}$ of dimension $4d$:
$$\xi_{k,l,j}=\pi_{k,l}\otimes o_j\,,$$
where $\pi_{k,l}\in\mathfrak{F}_{\C}(\langle A,B^{3^v}\rangle)$ and 
$o_j$ is a representation of the generalized binary octahedral group 
$O_{v}^{*}$ defined in Lemma~\ref{L:Ogrpreprs}.

\textsc{Subcase 2}b. If the assumption of Subcase 2a is not satisfied, the 
group $G$ is not the direct product of subgroups $\langle A,B^{3^v}\rangle$ 
and $O_{v}^{*}$. Then $\mathfrak{F}_{\C}(G)$ consists of $\varphi(mn)/2d^2$
representations $\gamma_{k',l',j}$ of dimension $4d$ induced by
representations $\nu_{k',l',j}$ of the subgroup $\langle A,B,P,Q\rangle$.
We have
$$\gamma_{k',l',j}(A)=
\begin{pmatrix}
   \nu_{k',l',j}(A) & 0 \\
   0 & \nu_{k',l',j}(A^l)
\end{pmatrix},
$$
$$\gamma_{k',l',j}(B)=
\begin{pmatrix}
   \nu_{k',l',j}(B) & 0 \\
   0 & \nu_{k',l',j}(B^k)
\end{pmatrix},
$$
$$\gamma_{k',l',j}(P)=
\begin{pmatrix}
   \nu_{k',l',j}(P) & 0 \\
   0 & \nu_{k',l',j}(QP)
\end{pmatrix},
$$
$$\gamma_{k',l',j}(Q)=
\begin{pmatrix}
   \nu_{k',l',j}(Q) & 0 \\
   0 & \nu_{k',l',j}(Q^{-1})
\end{pmatrix},\;
\gamma_{k',l',j}(R)=
\begin{pmatrix}
   0 & I_{2d} \\
   \nu_{k',l',j}(P^2) & 0
\end{pmatrix}.
$$
Here $(k',m)=(l',n)=1$, $j$ numerates representations of the group
$O_{v}^{*}$ (see Lemma~\ref{L:Ogrpreprs}), $k$ and $l$ are defined in 
row IV of Table~\ref{T:solvgrps}, and $I_{2d}$ is the unit matrix of 
dimension $2d\times 2d$.

\textsc{Case 3.} $3 | d$, in particular $9 | n$. The set 
$\mathfrak{F}_{\C}(G)$ consists of $\varphi(mn)/2d^2$ representations 
$\eta_{k',l'}$ of dimension $4d$ induced by representations $\mu_{k',l'}$ 
of the subgroup $\langle A,B,P,Q\rangle$. We have
$$\eta_{k',l'}(A)=
\begin{pmatrix}
   \mu_{k',l'}(A) & o \\
   0 & \mu_{k',l'}(A^l)
\end{pmatrix},\;
\eta_{k',l'}(B)=
\begin{pmatrix}
   \mu_{k',l'}(B) & 0 \\
   0 & \mu_{k',l'}(B^k)
\end{pmatrix},
$$
$$\eta_{k',l'}(P)=
\begin{pmatrix}
   \mu_{k',l'}(P) & 0 \\
   0 & \mu_{k',l'}(QP)
\end{pmatrix},\;
\eta_{k',l'}(Q)=
\begin{pmatrix}
   \mu_{k',l'}(Q) & 0 \\
   0 & \mu_{k',l'}(Q^{-1})
\end{pmatrix},
$$
$$\eta_{k',l'}(R)=
\begin{pmatrix}
   0 & I_{2d} \\
   \mu_{k',l'}(P^2) & 0
\end{pmatrix}.
$$
Here $(k',m)=(l',n)=1$, $k$ and $l$ are defined in row IV of 
Table~\ref{T:solvgrps}, and $I_{2d}$ is the unit matrix of dimension
$2d\times 2d$.

\emph{Type V.} Let $G$ be a group of type V. Then $G$ has the form
$G=K\times I^*$ and the set $\mathfrak{F}_{\C}(G)$ consists of
$2\varphi(mn)/d^2$ representations $\iota_{k,l,j}$ of dimension $2d$:
$$\iota_{k,l,j}=\pi_{k,l}\otimes \iota_j\,,$$
where $\pi_{k,l}$ are described in entry Type I of our List and 
$\iota_j=\iota_1$ or $\iota_{-1}$ are described in Lemma~\ref{L:Igrpreprs}.

\emph{Type VI.} Let $G$ be a group of type VI. Then the set 
$\mathfrak{F}_{\C}(G)$ consists of $\varphi(mn)/d^2$ representations
$\varkappa_{k',l',j}$ of dimension $4d$ induced by representations
$\iota_{k',l',j}$ of the subgroup $K\times I^*$. We have, in particular,
$$\varkappa_{k',l',j}(A)=
\begin{pmatrix}
   \iota_{k',l',j}(A) & 0 \\
   0 & \iota_{k',l',j}(A^l)
\end{pmatrix},\;
\varkappa_{k',l',j}(B)=
\begin{pmatrix}
   \iota_{k',l',j}(B) & 0 \\
   0 & \iota_{k',l',j}(B^k)
\end{pmatrix},
$$
$$\varkappa_{k',l',j}(S)=
\begin{pmatrix}
   0 & I_{2d} \\
   -I_{2d} & 0
\end{pmatrix},
$$
where $k$ and $l$ are defined in Theorem~\ref{T:nonsolvgrps}, and $I_{2d}$
is the unit matrix of dimension $2d\times 2d$. Matrices 
$\varkappa_{k',l',j}(\pm V)$, $\varkappa_{k',l',j}(\pm T)$,
$\varkappa_{k',l',j}(\pm U)$ for the generators of $I^*$ can be obtained
using Lemma~\ref{L:Igrpreprs} and Theorem~\ref{T:nonsolvgrps}.

\begin{theorem}\label{T:irreprs}
Let $G$ be a finite group possessing a complex representation without fixed
points. Then $G$ is one of the groups of types I -- VI and all irreducible 
complex representations of $G$ without fixed points are given in 
Table~\ref{Tb:reprs}. The columns of the table have the following meaning. 
The first column gives type of the group $G$, the second additional 
conditions on the group $G$, the third irreducible representations of $G$ 
without fixed points, the fourth the dimension of representations, the 
fifth column gives the determinant of the matrix of representation 
corresponding to the generator $B$ (sometimes some power of $B$) of the 
group $G$, and the sixth the conditions when the image of a representation 
is contained in $SL(d,\C)$ ($SL(2d,\C)$, $SL(4d,\C)$). All the other 
generators of the group $G$ have determinant $1$. We use in the table the 
notation introduced above in Theorems~\ref{T:solvgrps}, 
\ref{T:nonsolvgrps}, and the List of irreducible representations without 
fixed points. $D_{n}^{*}$ denotes the binary dihedral group of order $4n$.
\end{theorem}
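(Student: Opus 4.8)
The plan is to assemble Theorem~\ref{T:irreprs} from the pieces already laid out in this section, rather than to prove anything genuinely new. The statement is really a bookkeeping claim: it asserts (a) that a finite group with a complex representation without fixed points is one of types I--VI, and (b) that the explicit list of representations $\pi_{k,l}$, $\alpha_{k',l'}$, $\nu_{k,l}$, $\mu_{k,l}$, $\psi_{k,l,j}$, $\gamma$, $\eta$, $\iota_{k,l,j}$, $\varkappa_{k',l',j}$ given in the List exhausts $\mathfrak{F}_\C(G)$, and (c) it records two numerical invariants for each such representation: $\dim$ and $\det$ of the generator $B$ (or a suitable power of $B$), together with the arithmetic condition for the image to lie in the special linear group. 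Part (a) is immediate from Theorem~\ref{T:pq} (every such $G$ satisfies all $pq$-conditions) together with the abstract classification in Theorems~\ref{T:solvgrps} and \ref{T:nonsolvgrps}. Part (b) is exactly the content of \cite{Wolf}, Section~7.2, reproduced in the List above; the only thing that needs justification here is completeness and pairwise non-equivalence, and for that I would simply cite Wolf, remarking that the direct-product cases follow from the fact that an irreducible representation of $G_1\times G_2$ is an external tensor product of irreducibles, and the induced cases follow from Clifford theory applied to the index-$2$ (or index-$d$) normal subgroup.

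The real work, and the genuinely new contribution flagged in the introduction, is part (c). Here I would go type by type. For type I, $\det\pi_{k,l}(B)$ is the determinant of the $d\times d$ cyclic permutation-type matrix with the single nonzero ``corner'' entry $e^{2\pi i l/n'}$; its determinant is $(-1)^{d-1}e^{2\pi i l/n'}$, while $\det\pi_{k,l}(A)=e^{2\pi i k(1+r+\cdots+r^{d-1})/m}$, which I would want to simplify — under the standing hypotheses (in particular $r^d\equiv 1 \pmod m$ and the divisibility condition from Theorem~\ref{T:solvgrps}) one checks $1+r+\cdots+r^{d-1}\equiv 0\pmod m$ when $d>1$, so $A$ has determinant $1$; and $\det\pi_{k,l}(B)=1$ forces the stated congruence on $l$ modulo $n'$ (adjusted by the parity of $d$). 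The remaining types are then handled by propagating this computation through the tensor-product and induction formulas: for a tensor product $\pi\otimes\sigma$ with $\dim\sigma=s$ one has $\det(\pi\otimes\sigma)(g)=(\det\pi(g))^s(\det\sigma(g))^{\dim\pi}$, and for a representation induced from an index-$2$ subgroup the determinant of the image of an element of that subgroup is the product of the determinants of its two ``conjugate'' actions, while the determinant of the swap matrix contributes a sign. Since $\tau$, $o_j$, $\iota_j$, $\alpha_k$ all land in $SU(2)$ or $SU(4)$ and hence have determinant $1$ on every generator, the determinant of the image of $B$ (or its relevant power) in every type is, up to an explicit root of unity and an explicit sign, just a power of $e^{2\pi i l/n'}$, which is what the fifth column records; setting it equal to $1$ gives the sixth column.

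I expect the main obstacle to be purely organizational: keeping the many case distinctions of types III and IV (Cases 1, 2, 3 and Subcases a, b) straight, and tracking exactly which power of $B$ appears — in the induced cases the matrix $\varkappa(B)$ or $\gamma(B)$ is block-diagonal with blocks indexed by $B$ and $B^k$ (or $B^3$, $B^{3^v}$), so the ``generator $B$'' whose determinant is tabulated is sometimes really $B$ acting on the smaller group and one must be careful whether the stated congruence is modulo $n$, $n'=n/d$, or a variant. A secondary subtlety is the sign $(-1)^{d-1}$: it matters whether $d$ is odd or even, and under the Theorem~\ref{T:solvgrps} hypotheses $d$ has restricted parity, so the sign can often be absorbed, but this needs to be checked case by case. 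Once those bookkeeping points are settled, each individual determinant computation is a one-line exercise, so I would present the argument as a table-driven verification: state the two general lemmas (determinant of a tensor product; determinant of an induced representation), then run through types I--VI filling in the entries, and finally observe that ``image in $SL$'' is equivalent to ``$\det$ of $B$-block $=1$'' because all other generators already have determinant $1$ by inspection of the matrices in the List and in Lemmas~\ref{L:Qgrpreprs}--\ref{L:Igrpreprs}.
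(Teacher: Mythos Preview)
Your plan matches the paper's proof essentially step for step: the paper likewise defers parts (a) and (b) entirely to Wolf, Section~7.2, and devotes the argument to computing the determinants in columns~5 and~6 type by type, using exactly the tensor-product formula $\det(S\otimes T)=|S|^n|T|^m$ and the block structure of the induced representations that you describe. One small correction to your forecast of the outcome: the representations $\tau_j$ of $T_v^*$ for $v>1$ do \emph{not} land in $SU(2)$ --- one has $\det\tau_j(X)=e^{4\pi ij/3^v}\neq 1$ --- so in Type~III, Case~2 the determinant of $B^{n''}$ picks up a genuine contribution from the $\tau_j$ factor and is not just a power of $e^{2\pi il/n'}$; this is why that row of Table~\ref{Tb:reprs} carries two separate determinant entries and why the $G<SL$ column reads ``never'' there. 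Your methodology would catch this when you actually carry out the computation, but your summary sentence about the fifth column is slightly too optimistic.
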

\begin{table}[hbt]
\begin{center}
   \begin{tabular}{|c|p{3cm}|c|c|p{2.5cm}|p{2.3cm}|}
   \hline
   Type & Case & Repr. & $\dim$ & $\det(B)$ 
   & $G<SL$ \\ \hline
   I & \phantom{0} & $\pi_{k,l}$ & $d$ & $(-1)^{d-1}e^{2\pi il/n'}$ 
   & $n=2^{s+1}$, \newline $d=2^s$, $s\geq 1$, \newline
   or $d=1$ and $G=\{1\}$ \\ \hline
   II & \phantom{0} & $\alpha_{k',l'}$ & $2d$ & $e^{2\pi il'(k+1)/n'}$
   & $d=1$ and \newline $G=D_{n}^{*}$, \newline or $d=2$ and $k=-1$ 
   \\ \hline
   III & $9\nmid n$ & $\nu_{k,l}$ & $2d$ & $|\nu_{k,l}(B^3)|= \newline
   e^{12\pi il/n'}$ & $d=1$ and \newline $G=T^*$ \\ \hline
   III & $9|n$, $3\nmid d$ & $\nu_{k,l,j}$ & $2d$ & $|\nu_{k,l,j}(B^{n''})|=
   \newline e^{4\pi ijd/3^v}$, $|\nu_{k,l,j}(B^{3^v})|=\newline 
   e^{4\pi il3^v/n'}$ & never \\ \hline
   III & $3|d$ & $\mu_{k,l}$ & $2d$ & $e^{4\pi il/n'}$ & never \\ \hline
   IV & $9\nmid n$, \newline $G=\langle A,B^3\rangle\times O^*$ 
   & $\psi_{k,l,j}$ & $2d$ & $|\psi_{k,l,j}(B^3)|= \newline 
   e^{4\pi il/(n'/3)}$ & $d=1$ and \newline $G=O^*$ \\ \hline
   IV & $9\nmid n$, \newline $G\ne\langle A,B^3\rangle\times O^*$ 
   & $\gamma_{k',l'}$ & $4d$ & $|\gamma_{k',l'}(B^3)|= \newline
   e^{4\pi il'(k+1)/(n'/3)}$ & $d=1$, $A=1$, and $k=-1$ \\ \hline
   IV & $9|n$, $3\nmid d$, \newline $G=\langle A,B^{3^v}\rangle\times 
   O_{v}^{*}$ & $\xi_{k,l,j}$ & $4d$ & $|\xi_{k,l,j}(B^{3^v})|= \newline
   e^{2\pi il/(n''/d)}$ & $d=1$ and \newline $G=O_{v}^{*}$ \\ \hline
   IV & $9|n$, $3\nmid d$, \newline $G\ne\langle A,B^{3^v}\rangle\times 
   O_{v}^{*}$ & $\gamma_{k',l',j}$ & $4d$ & $|\gamma_{k',l',j}(B^{3^v})|=
   \newline e^{2\pi il'(k+1)/(n''/d)}$ & $d=1$, $A=1$ \\ \hline
   IV & $3|d$ & $\eta_{k',l'}$ & $4d$ & $e^{4\pi il'(k+1)/n'}$
   & never \\ \hline
   V & \phantom{0} & $\iota_{k,l,j}$ & $2d$ & $e^{4\pi il/n'}$ 
   & $d=1$ and \newline $G=I^*$ \\ \hline
   VI & \phantom{0} & $\varkappa_{k',l',j}$ & $4d$ & $e^{4\pi il'(k+1)/n'}$
   & $d=1$, $A=1$, $k=-1$ \\
   \hline
   \end{tabular}
\end{center}
\caption{Irreducible representations without fixed points}\label{Tb:reprs}
\end{table}
\begin{proof}
Everything with the exception of computation of determinants is done
in \cite{Wolf}, Section~7.2. So let us concentrate on columns 5 and 6
of Table~\ref{Tb:reprs}.

\emph{Type I.} From the matrices given in the corresponding entry of
the List of irreducible representations without fixed points one easily
finds 
$$|\pi_{k,l}(A)|=\exp\left(\frac{2\pi ik}{m}(1+\dots+r^{d-1})\right)=1\,,$$
because $m|(r^d-1)$ but $(r-1,m)=1$, see Table~\ref{Tb:solvgrps}. Further,
$$|\pi_{k,l}(B)|=(-1)^{d-1}e^{2\pi il/n'}\,.$$
Since $(l,n')=1$, $e^{2\pi il/n'}$ is a primitive root of unity of degree
$n'$. It follows that the last determinant can be equal to $1$ either when 
$d=n'=n=m=1$, that is when the group $G$ is trivial, or when $n'=2$. Since 
any prime divisor of $d$ divides $n'$, we have $d=2^s$, $n=n'd=2^{s+1}$, 
$s\geq 1$. The case $s=0$, i.~e. $d=1$, again leads to the trivial group 
$G$. We have proved the theorem for groups of type I.

\emph{Type II.} From the matrices of the representation $\alpha_{k',l'}$
and from row I of Table~\ref{Tb:reprs} which we just proved one easily
finds 
$$|\alpha_{k',l'}(A)|=|\pi_{k',l'}(A)||\pi_{k',l'}(A^l)|=1\,,$$
$$|\alpha_{k',l'}(R)|=(-1)^d|\pi_{k',l'}(B^{n/2})|=(-1)^d e^{\pi il'd}=1,$$
and
$$|\alpha_{k',l'}(B)|=|\pi_{k',l'}(B)||\pi_{k',l'}(B^k)|=
e^{2\pi i(k+1)l'/n'},$$
where one has to use that $n$ and $k+1$ are divisible by $4$. If
determinant of $B$ is equal to $1$, then $n'|k+1$, and thus any prime
divisor of $d$ divides also $k+1$. On the other hand, $d$ divides
$k-1$ (see row II of Table~\ref{Tb:solvgrps}). It follows that $d=1$ or
$d=2$. In the first case we get $m=1$ and $A=1$, thus 
$$G=\langle B,R\,|\,R^2=B^{n/2},\; RBR^{-1}=B^{-1},\; B^n=1\rangle$$
is the binary dihedral group and $\alpha_{k',l'}$ is its standard 
representation. In the second case we get $n'=2^{u-1}v$. On the other hand,
$(k+1)/n'$ must be integer, but $k$ is defined modulo $n$. Thus
we may take $k=-1$ and the group reduces to $G=\langle A,B,R \,|\,
A^m=B^n=1,\; R^2=B^{n/2},\; BAB^{-1}=A^r,\; RAR^{-1}=A^l,\; 
RBR^{-1}=B^{-1}\rangle$.

\emph{Type III.} Recall that if $S$ and $T$ are matrices of size
$m\times m$ and $n\times n$ respectively, then
$$|S\otimes T|=|A|^n |B|^m\,.$$

\textsc{Case 1.} $9\nmid n$. From the definition of the representation
$\nu_{k,l}$ we get
$$|\nu_{k,l}(A)|=|\pi_{k,l}\otimes\tau(A)|=1\,,$$
$$|\nu_{k,l}(B^3)|=|\pi_{k,l}(B^3)|^2=e^{12\pi il/n'}\,.$$
Matrices of $B^{n/3}$, $P$, and $Q$ all have determinant $1$ since
they generate the binary tetrahedral group. Recall that now $n'$ is odd,
thus $|\nu_{k,l}(B^3)|=1$ only if $n'=1$ or $n'=3$. But the first case
is impossible, because it would imply that $n=1$, but $n$ is divisible by 
$3$. In the second case we have $d=1$ or $d=3$. Again the second case
is impossible because it would imply $9|n$. We conclude that $d=1$,
$G=T^*$, and $\nu_{k,l}=\tau$.

\textsc{Case 2.} $9|n$, but $3\nmid d$. In this case the group $G$
is the direct product of the subgroups $\langle A,B^{3^v}\rangle$ and
$\langle B^{n''},P,Q\rangle\simeq T_{v}^{*}$, where $n=3^vn''$, $v\geq 2$,
and $B^{n''}$ corresponds to the generator $X$ of the group $T_{v}^{*}$.
It is easy to check that $|\tau_j(X)|=e^{4\pi ij/3^v}$ (see 
Lemma~\ref{L:Tgrpreprs}), which is never
equal to $1$. It follows that $|\nu_{k,l,j}(B^{n''})|=e^{4\pi ijd/3^v}$
and the image of the group $G$ is never contained in the group
$SL(2d,\C)$. It is also a straightforward verification that
$|\nu_{k,l,j}(A)|=|\nu_{k,l,j}(P)|=|\nu_{k,l,j}(Q)|=1$ and 
$|\nu_{k,l,j}(B^{3^v})|=e^{4\pi il3^v/n'}$.

\textsc{Case 3.} $3|n$. From the definition of the representation
$\mu_{k,l}$ one finds that determinants of the matrices corresponding
to $A$, $P$, and $Q$ are all equal to $1$, whereas $|\mu_{k,l}(B)|=
e^{4\pi il/n'}$. Note that $n'$ is divisible by $3$, $(n',l)=1$, thus the 
last determinant is never equal to $1$ and the image of $\mu_{k,l}$ is 
never contained in $SL(2d,\C)$.

\emph{Type IV.} 

\textsc{Case 1}a. $9\nmid n$, $G$ is the direct product
$\langle A,B^3\rangle\times O^*$, where $O^*=\langle B^{n/3},P,Q,R\rangle$.
From the definition of the representation $\psi_{k,l,j}$ one finds
$$|\psi_{k,l,j}(A)|=|\psi_{k,l,j}(B^{n/3})|=|\psi_{k,l,j}(P)|=
|\psi_{k,l,j}(Q)|=|\psi_{k,l,j}(R)|=1\,,$$
$$|\psi_{k,l,j}(B^3)|=|\pi_{k,l}\otimes o_j(B^3)|=e^{4\pi il/(n'/3)}\,.$$
Note that $n'$ is an odd number, so the only possibility for the last
determinant to be equal to $1$ is $d=1$, $n=n'=3$, thus $G=O^*$.

\textsc{Case 1}b. Conditions are as in Case 1a, but $G$ is not the direct
product. From the definition of the representation $\gamma_{k',l'}$ it
easily follows that determinants of matrices corresponding to $A$,
$B^{n/3}$, $P$, $Q$, $R$ are all equal to $1$. For $B^3$ we have
$$|\gamma_{k',l'}(B^3)|=|\nu_{k',l'}(B^3)||\nu_{k',l'}(B^{3k})|=
e^{4\pi il'(k+1)/(n'/3)}\,.$$
By an argument analogous to that of Type II we prove that either $d=2$
or $d=1$. But here $n$ is odd, thus $d=1$. This implies $A=1$. Further,
$n/3$ divides $k+1$, and at the same time $3$ divides $k+1$. Since
$k$ is determined modulo $n$, we may take $k=-1$.

\textsc{Case 2}a. $9|n$, $3\nmid d$, and $G$ is the direct product
$\langle A,B^{3^v}\rangle\times O_{v}^{*}$, where $O_{v}^{*}$ is
generated by $B^{n''}$, $P$, $Q$, and $R$, $n=3^vn''$, $(3,n'')=1$. First
please check that $|o_j(X)|=1$, where $X$ is a generator of the
generalized binary octahedral group. This follows easily from 
Lemma~\ref{L:Ogrpreprs}. Then it follows from the definition of
the representation $\xi_{k,l}$ that the matrices corresponding
to $A$, $B^{n''}$, $P$, $Q$, $R$ all have determinant $1$. Further,
$$|\xi_{k,l}(B^{3^v})|=e^{8\pi il/(n''/d)}\,.$$
This determinant equals $1$ only if $n''=d$. But any prime divisor of
$d$ divides $n''/d$, hence $n''=d=1$, and $G=O_{v}^{*}$.

\textsc{Case 2}b. The conditions are as in Case 2a, but $G$ is not the
direct product. From the definition of the representation 
$\gamma_{k',l',j}$ one easily computes that all determinants are equal
to $1$ with the exception of
$$|\gamma_{k',l',j}(B^{3^v})|=e^{2\pi il'(k+1)/(n''/d)}\,.$$
If this determinant equals $1$ and $p$ is a prime divisor of $d$, we
again see that $p$ must divide both $k-1$ and $k+1$, thus $p=2$. But
in this case $n$ is odd, so the only possibility is $d=1$ and $A=1$.

\textsc{Case 3.} $3|d$. From the definition of the representation
$\eta_{k',l'}$ one directly finds that all the determinants involved
equal $1$, with the exception of
$$|\eta_{k',l'}(B)|=e^{4\pi il'(k+1)/n'}.$$
This determinant can be equal to $1$ only if $d=1$, but this would
contradict $3|d$. Therefore $\eta_{k',l'}(G)$ is never contained in
$SL(4d,\C)$ in this case.

\emph{Type V.} Now $G$ a direct product $K\times I^*$, where $K$ is a
group of type I and order of $G$ is coprime with $30$. It follows from the 
definition of the representation $\iota_{k,l,j}$ and Lemma~\ref{L:Igrpreprs}
that matrices corresponding to generators of $G$ all have determinant $1$
with the exception of $|\iota_{k,l,j}(B)|$ which equals $e^{4\pi il/n'}$.
The number $n'$ is odd, thus this determinant can be equal to $1$ only
if $n'=1$. This implies $d=n=1$ and $K$ is trivial. Hence $G=I^*$ is the
binary icosahedral group and $\iota_{k,l,j}$ is one of its representations
described in Lemma~\ref{L:Igrpreprs}.

\emph{Type VI.} Again an easy computation that we omit shows that the 
question can be only in the determinant $|\varkappa_{k',l',j}(B)|$.
From the definition of $\varkappa_{k',l',j}$ one finds
$$|\varkappa_{k',l',j}(B)|=e^{4\pi il'(k+1)}\,.$$
The same type of argument as for Type II shows that this can be equal to 
$1$ only if $d=1$, $A=1$, and $k=-1$.
\end{proof}

\begin{remark}
Note that if one of the irreducible representations without fixed points
of a group $G$ is contained in the special linear group, then the others 
are also. Thus this seems to be a property of the group $G$ rather than
of a particular representation.
\end{remark}
\begin{remark}
For a given group $G$ from Theorems~\ref{T:solvgrps} and 
\ref{T:nonsolvgrps}, all its irreducible representations have the same
dimension $d$, $2d$, or $4d$. Thus any representation without fixed
points of $G$ has dimension multiple to $d$, $2d$, or $4d$ respectively.
\end{remark}
\begin{remark}
Note also that all representations from Theorem~\ref{T:irreprs} are 
imprimitive and induced from primitive representations of dimension $1$
or $2$.
\end{remark}

Finally we have to determine the automorphisms of groups of types I--VI
and the action of the automorphisms on the irreducible representations
of these groups by the rule $\varphi\to\varphi\circ\alpha$, where $\varphi$ 
is a representation and $\alpha$ is an automorphism. This will give the 
conditions when the images of two representations are conjugate in $GL$.
\begin{theorem}[\cite{Wolf}, Theorem~7.3.21]\label{T:aut}
Let $G$ be a finite group possessing a representation without fixed points,
i.~e., one of the groups listed in Theorems~\ref{T:solvgrps} and
\ref{T:nonsolvgrps}. Then the action of the automorphisms on the
irreducible representations of $G$ is described in Table~\ref{Tb:aut}. We
use here the notation introduced in Table~\ref{Tb:solvgrps}, 
Theorem~\ref{T:nonsolvgrps}, and the List of irreducible complex 
representations without fixed points.
\end{theorem}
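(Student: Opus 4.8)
The plan is to reduce, via Lemma~\ref{L:automorphism}, the whole statement to two separate computations for each of the groups $G$ of types I--VI: first the determination of the automorphism group $\operatorname{Aut}(G)$ (it suffices to know it modulo inner automorphisms and to have explicit generators), and then the computation of the permutation that a given $\alpha\in\operatorname{Aut}(G)$ induces on $\mathfrak{F}_{\C}(G)$ by the rule $\varphi\mapsto\varphi\circ\alpha$. For the second step one uses that, by Lemma~\ref{L:automorphism}, only the \emph{equivalence class} of $\varphi\circ\alpha$ matters, and since every representation in Table~\ref{Tb:reprs} is monomial/induced or a tensor product of $1$- and $2$-dimensional pieces with the explicit matrices listed above, equivalence of $\varphi\circ\alpha$ with some $\psi$ comes down to comparing characters, i.e.\ to explicit congruence conditions on the parameters $k,l$ (and $j$). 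So the substantive part is pinning down $\operatorname{Aut}(G)$.

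I would then go through the types. For type I, $G=\langle A,B\mid A^m=B^n=1,\ BAB^{-1}=A^r\rangle$ is metacyclic; using the arithmetic condition $(n(r-1),m)=1$ one checks that $\langle A\rangle$ is characteristic, so every automorphism has the form $A\mapsto A^a$, $B\mapsto A^b B^c$ with $(a,m)=(c,n)=1$ subject to the compatibility forced by $BAB^{-1}=A^r$; feeding this into $\pi_{k,l}$ and reading off the eigenvalues of $\pi_{k,l}(A)$ and the $n'$-th root attached to $B$ gives the action on the index set $\{(k,l)\}$ taken modulo the conjugation equivalence $k\sim kr$. For types II, III, IV the type-I subgroup $\langle A,B\rangle$ (respectively $\langle A,B^3\rangle$, $\langle A,B^{3^v}\rangle$) together with the quaternion, generalized binary tetrahedral, or generalized binary octahedral part is preserved up to the rigidity forced by the defining relations and by Theorem~\ref{T:Sylowsgrps} and Remark~\ref{R:qgroup}, so $\operatorname{Aut}(G)$ is bootstrapped from $\operatorname{Aut}$ of the type-I part and the small automorphism groups of $Q8$, $T^*_v$, $O^*_v$; the action on the block matrices $\alpha_{k',l'}$, $\nu_{k,l}$, $\nu_{k,l,j}$, $\mu_{k,l}$, $\psi_{k,l,j}$, $\gamma$, $\eta$, $\xi$ is then read directly off the List. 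For type V, $G=K\times I^*$ with $(|K|,30)=1$, both factors are characteristic (they are the products of the Sylow subgroups for complementary sets of primes), so $\operatorname{Aut}(G)=\operatorname{Aut}(K)\times\operatorname{Aut}(I^*)$; here $\operatorname{Aut}(I^*)\cong S_5$ and its nontrivial outer class interchanges the two representations $\iota_1$ and $\iota_{-1}$ of Lemma~\ref{L:Igrpreprs}, which together with the type-I analysis gives the action on $\iota_{k,l,j}=\pi_{k,l}\otimes\iota_j$. Type VI, an extension of a type-V group by the element $S$ with $S^2=-I$, is handled by the same procedure, now also recording that $S$ itself already realises the outer automorphism $\theta$ of $I^*$ from Theorem~\ref{T:nonsolvgrps}.

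The main obstacle I expect is the precise determination of $\operatorname{Aut}(G)$ for the metacyclic type-I groups and for the extensions of types IV and VI: one must verify that the ``obvious'' subgroups ($\langle A\rangle$, $\langle A,B\rangle$, $\langle P,Q\rangle$, the $I^*$-factor, and so on) are genuinely characteristic — which is exactly where the arithmetic conditions of Table~\ref{Tb:solvgrps} and the Sylow structure are used — and then correctly enumerate the admissible parameters modulo inner automorphisms. Once $\operatorname{Aut}(G)$ is in hand, the remaining work is the (lengthy but routine) bookkeeping of substituting generators into the explicit matrices and reducing the resulting relations $\varphi\circ\alpha\sim\psi$ to the congruences recorded in Table~\ref{Tb:aut}; this is precisely the content of \cite{Wolf}, Section~7.3, to which we refer for the details.
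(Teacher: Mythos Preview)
The paper does not actually prove this theorem: it is stated with the attribution ``\cite{Wolf}, Theorem~7.3.21'' and no proof environment follows, so the comparison is with Wolf's argument rather than anything in the paper itself. Your outline---determine $\operatorname{Aut}(G)$ by showing the relevant subgroups are characteristic, then track the effect of automorphisms on the explicit parameters of the representations in the List via Lemma~\ref{L:automorphism}---is exactly the strategy of \cite{Wolf}, Section~7.3, to which you correctly defer for the details; so your proposal is consistent with both the paper's treatment and the cited source.
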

\begin{table}[hbt]
\begin{center}
   \begin{tabular}{|c|p{3cm}|c|p{2.8cm}|}
   \hline
   Type & Case & Action & Conditions  \\ \hline
   I & \phantom{0} & $A_{a,b}\colon \pi_{k,l}\mapsto \pi_{ak,bl}$
   & $(a,m)=1$, \newline $(b,n)=1$, \newline $b\equiv 1(d)$ \\ \hline
   II & \phantom{0} & $A_{a,b}\colon \alpha_{k',l'}\mapsto 
   \alpha_{ak',bl'}$ & $(a,m)=1$, \newline $(b,n)=1$, 
   \newline $b\equiv 1(d)$  \\ \hline
   III & $9\nmid n$ & $A_{a,b}\colon \nu_{k,l}\mapsto \nu_{ak,bl}$ &  
   $(a,m)=1$, \newline $(b,n/3)=1$, \newline $b\equiv 1(d)$ \\ \hline
   III & $9|n$, $3\nmid d$ & $A_{a,b,c}\colon\nu_{k,l,j}\mapsto
   \nu_{ak,bl,cj}$ & $(a,m)=1$, \newline $(b,n/3^v)=1$, \newline
   $(c,3)=1$, \newline $b\equiv 1(d)$ \\ \hline
   III & $3|d$ & $A_{a,b}\colon \mu_{k,l}\mapsto \mu_{ak,bl}$ 
   & $(a,m)=1$, \newline $(b,n)=1$, \newline $b\equiv 1(d)$ \\ \hline
   IV & $9\nmid n$, \newline $G=\langle A,B^3\rangle\times O^*$ 
   & $A_{a,b,c}\colon \psi_{k,l,j}\mapsto \psi_{ak,bl,cj}$ 
   & $(a,m)=1$, \newline $(b,n/3)=1$, \newline $b\equiv 1(d)$,
   $c=\pm 1$ \\ \hline
   IV & $9\nmid n$, \newline $G\ne\langle A,B^3\rangle\times O^*$ 
   & $A_{a,b}\colon \gamma_{k',l'}\mapsto \gamma_{ak',bl'}$ 
   & $(a,m)=1$, \newline $(b,n/3)=1$, \newline $b\equiv 1(d)$ \\ \hline
   IV & $9|n$, $3\nmid d$, \newline $G=\langle A,B^{3^v}\rangle\times 
   O_{v}^{*}$ & $A_{a,b,c}\colon \xi_{k,l,j}\mapsto \xi_{ak,bl,cj}$ 
   & $(a,m)=1$, \newline $(b,n/3^v)=1$, \newline $(c,3)=1$, \newline 
   $b\equiv 1(d)$ \\ \hline
   IV & $9|n$, $3\nmid d$, \newline $G\ne\langle A,B^{3^v}\rangle\times 
   O_{v}^{*}$ & $A_{a,b,c}\colon \gamma_{k',l',j}\mapsto 
   \gamma_{ak',bl',cj}$ & $(a,m)=1$, \newline $(b,n/3^v)=1$, \newline
   $(c,3)=1$, \newline $b\equiv 1(d)$ \\ \hline
   IV & $3|d$ & $A_{a,b}\colon \eta_{k',l'}\mapsto \eta_{ak',bl'}$ 
   & $(a,m)=1$, \newline $(b,n)=1$, \newline $b\equiv 1(d)$ \\ \hline
   V & \phantom{0} & $A_{a,b,c}\colon \iota_{k,l,j}\mapsto 
   \iota_{ak,bl,cj}$ & $(a,m)=1$, \newline $(b,n)=1$, \newline 
   $b\equiv 1(d)$, $c=\pm 1$ \\ \hline
   VI & \phantom{0} & $A_{a,b,c}\colon \varkappa_{k',l',j}\mapsto
   \varkappa_{ak',bl',cj}$ & $(a,m)=1$, \newline $(b,n)=1$, \newline 
   $b\equiv 1(d)$, $c=\pm 1$ \\
   \hline
   \end{tabular}
\end{center}
\caption{Action of automorphisms on representations}\label{Tb:aut}
\end{table}

Now we are able to formulate the main theorem on classification of IQS
over the field $\C$.
\begin{theorem}\label{T:classisqsing}
Let $Q\in X$ be an isolated quotient singularity of a variety $X$ defined
over the field $\C$, as it is described in Section~\ref{S:prelim}. Then
locally analytically at $Q$ the variety $X$ is isomorphic to the
quotient $\C^N/\varphi(G)$, where $N=\dim X$, $G$ is one of the groups 
described in Theorems~\ref{T:solvgrps} and \ref{T:nonsolvgrps}, and 
$\varphi$ is a direct sum of irreducible representations of $G$ described 
in Theorem~\ref{T:irreprs}. 

Conversely, any group from Theorems~\ref{T:solvgrps} and 
\ref{T:nonsolvgrps} acting on $\C^N$ via a direct sum $\varphi$ of 
irreducible representations described in Theorem~\ref{T:irreprs} gives an 
isolated singularity $O\in\C^N/\varphi(G)$. If $\varphi$ and $\psi$ are two 
such representations, then singularities $\C^N/\varphi(G)$ and 
$\C^N/\psi(G)$ are isomorphic if and only if $\varphi$ and $\psi$ can be 
transformed into each other by an action of an automorphism of the group 
$G$ as described in Theorem~\ref{T:aut}.
\end{theorem}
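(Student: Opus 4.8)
The plan is to assemble the statement from the results already collected in Sections~\ref{S:prelim} and \ref{S:class}; the only point requiring a small separate argument is that an irreducible constituent of a fixed-point-free representation is itself faithful and fixed-point-free, after which everything is a chaining of cited lemmas.

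For the first assertion, I would start with an isolated quotient singularity $Q=\pi(P)\in V/G$ as in Section~\ref{S:prelim}. Since $\chr\C=0\nmid|G|$ and $P$ is a nonsingular fixed point, the action of $G$ can be linearized in local analytic coordinates at $P$ (in the formal setting this is Lemma~\ref{L:linearization}), so analytically at $Q$ the germ of $X$ is that of $\C^N/G$ with $G<GL(N,\C)$ finite and $N=\dim X$. Passing to the quotient of $G$ by the normal subgroup generated by its quasireflections changes neither the variety $\C^N/G$ nor the point $Q$, by the Chevalley--Shephard--Todd theorem, so I may assume $G$ free from quasireflections. Because $Q$ is an isolated singularity, Lemma~\ref{L:eigen1} shows $G$ is a group without fixed points; Theorem~\ref{T:pq} then supplies all the $pq$-conditions, and Theorems~\ref{T:solvgrps} and \ref{T:nonsolvgrps} identify $G$ as one of the groups of types I--VI (the nonsolvable case is available because the embedding $\varphi\colon G\hookrightarrow GL(N,\C)$ is a fixed-point-free complex representation). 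Finally $\varphi$ is completely reducible, $\varphi=\bigoplus_i\varphi_i$; each $\varphi_i$ is again fixed-point-free, since an eigenvalue $1$ on a summand is an eigenvalue $1$ on the whole space, and each $\varphi_i$ is faithful, since a nontrivial element of $\ker\varphi_i$ would act trivially on that summand and hence have eigenvalue $1$. Thus every $\varphi_i$ lies in $\mathfrak{F}_{\C}(G)$ and appears in Theorem~\ref{T:irreprs}.

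For the converse, take $G$ of type I--VI and $\varphi=\bigoplus_i\varphi_i$ with $\varphi_i\in\mathfrak{F}_{\C}(G)$. The same eigenvalue bookkeeping shows $\varphi$ is faithful and fixed-point-free, so $\varphi(G)<GL(N,\C)$ is a group without fixed points; as $N\geq 2$ it contains no quasireflections, and Lemma~\ref{L:eigen1} yields that $\C^N/\varphi(G)$ has at worst an isolated singularity at the image $O$ of the origin (genuinely singular exactly when $G\neq 1$, by Chevalley--Shephard--Todd).

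For the classification up to isomorphism, observe first that an isomorphism $\C^N/\varphi(G)\cong\C^N/\psi(G')$ forces the dimensions to agree and, by Lemma~\ref{L:conjgrps} (applicable since both groups are fixed-point-free and $N\geq 2$), forces $\varphi(G)$ and $\psi(G')$ to be conjugate in $GL(N,\C)$; as $\varphi,\psi$ are faithful this gives $G\cong G'$, so there is no loss in taking $G=G'$ as in the statement. The chain of equivalences is then: $\C^N/\varphi(G)\cong\C^N/\psi(G)$ $\iff$ $\varphi(G)$ and $\psi(G)$ are conjugate in $GL(N,\C)$ (Lemma~\ref{L:conjgrps}) $\iff$ there is $\alpha\in\mathrm{Aut}(G)$ with $\psi\circ\alpha$ equivalent to $\varphi$ (Lemma~\ref{L:automorphism}) $\iff$ $\varphi$ and $\psi$ are related by one of the automorphism actions tabulated in Theorem~\ref{T:aut}. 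The last step is the only place any real work sits: $\psi\circ\alpha=\bigoplus_i(\psi_i\circ\alpha)$, and by Theorem~\ref{T:aut} each $\psi_i\circ\alpha$ is again an explicit member of $\mathfrak{F}_{\C}(G)$ with transformed indices, so $\psi\circ\alpha\sim\varphi$ translates into an equality of the multisets of representation-indices after an admissible substitution $A_{a,b}$ (resp.\ $A_{a,b,c}$) — and this is precisely what is meant by "$\varphi$ and $\psi$ can be transformed into each other by an automorphism of $G$". I expect the main, and essentially only, obstacle to be stating this direct-sum bookkeeping and the constituent-faithfulness observation cleanly; the remainder is a direct appeal to the results already established.
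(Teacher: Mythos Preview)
Your proposal is correct and matches the paper's approach exactly: Theorem~\ref{T:classisqsing} is stated in the paper without a separate proof, as the summary of the preceding classification (the Vincent program outlined at the end of Section~\ref{S:prelim}), and your write-up is precisely the chaining of Lemma~\ref{L:linearization}, Chevalley--Shephard--Todd, Lemma~\ref{L:eigen1}, Theorem~\ref{T:pq}, Theorems~\ref{T:solvgrps}--\ref{T:nonsolvgrps}, Theorem~\ref{T:irreprs}, Lemma~\ref{L:conjgrps}, Lemma~\ref{L:automorphism}, and Theorem~\ref{T:aut} that the paper intends. The one detail you make explicit that the paper leaves tacit --- that each irreducible constituent of a fixed-point-free representation is itself faithful and fixed-point-free --- is correct and is exactly the small observation needed to justify invoking Theorem~\ref{T:irreprs} summand by summand.
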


\begin{remark}
The conditions for singularities $\C^N/\varphi(G)$ and $\C^N/\psi(G)$ to
be isomorphic can be stated more explicitely. An interested reader can
consult \cite{Wolf}, Sections~7.3 and 7.4.
\end{remark}

This classification generalizes almost literally to other algebraically 
closed fields of characteristic $0$.
\begin{theorem}
Let $\Bbbk$ be an algebraically closed field of characteristic $0$. Let
$Q\in X$ be an isolated singularity of an algebraic variety $X$ defined
over $\Bbbk$. Then at the point $Q$ the variety $X$ is formally isomorphic
to $\Bbbk^N/\varphi(G)$, where $N=\dim X$, $G$ is one of the groups 
described in Theorems~\ref{T:solvgrps} and \ref{T:nonsolvgrps}, and 
$\varphi$ is a direct sum of irreducible representations without fixed 
points of $G$ described in Theorem~\ref{T:irreprs}, where in matrices of 
representations one has to replace the complex roots of unity with 
roots of unity of corresponding degree contained in the field $\Bbbk$.
\end{theorem}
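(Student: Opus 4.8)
The plan is to reduce the statement to the classification over $\C$ already assembled in Theorems~\ref{T:solvgrps}, \ref{T:nonsolvgrps}, and \ref{T:irreprs}, exploiting that in characteristic $0$ the representation theory of a finite group is insensitive to the choice of algebraically closed ground field. First I would run the same preliminary reductions that precede the complex classification. Since $\chr\Bbbk = 0$ does not divide $|G|$, Lemma~\ref{L:linearization} makes $X$ formally isomorphic at $Q$ to $\Bbbk^N/G$ for some finite $G<GL(N,\Bbbk)$; reducing as in Section~\ref{S:prelim} to the case in which $G$ is free from quasireflections (which uses only the complete local form of the Chevalley--Shephard--Todd theorem and a further application of Lemma~\ref{L:linearization}), Lemma~\ref{L:eigen1} then converts the hypothesis that $Q$ is isolated into the assertion that $G$ is a group without fixed points. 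Finally, the faithful representation $\varphi$ through which $G$ acts on $\Bbbk^N$ is a direct sum of irreducible subrepresentations, each of which is again without fixed points, since an eigenvalue $1$ of some $g\in G$ on a summand would be an eigenvalue of $g$ on $\varphi$.

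Next I would invoke the group-theoretic skeleton of the classification, which transfers verbatim: by Theorem~\ref{T:pq} the group $G$ satisfies all $pq$-conditions over any field, and if $G$ is solvable, Theorem~\ref{T:solvgrps}, being purely group-theoretic, already places $G$ among types I--IV. The substance of the proof is therefore a transfer principle for the representation-theoretic statements in Theorems~\ref{T:nonsolvgrps} and \ref{T:irreprs}, which are phrased only over $\C$; here is how I would set it up. Let $e$ be the exponent of $G$. By Brauer's theorem on splitting fields, the cyclotomic field $K=\Q(\zeta_e)$ is a splitting field for $G$, and since $\Bbbk$ is algebraically closed of characteristic $0$ it contains a copy of $\overline{\Q}\supseteq K$; fix embeddings $K\hookrightarrow\C$ and $K\hookrightarrow\Bbbk$. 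Because $K$ is already a splitting field, extension of scalars along these two embeddings identifies the irreducible $KG$-modules with the irreducible $\C G$-modules and with the irreducible $\Bbbk G$-modules, yielding a dimension-preserving bijection between $\C$- and $\Bbbk$-irreducibles compatible with direct sums, tensor products and duals. For each $g\in G$ the eigenvalues of $g$ on an irreducible $KG$-module are roots of unity lying in $K$, and the eigenvalues on the associated $\C$- or $\Bbbk$-module are their images under the fixed embeddings; in particular $1$ is an eigenvalue of $g$ over $\C$ exactly when it is over $\Bbbk$. Hence the bijection sends representations without fixed points to representations without fixed points, in both directions.

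Granting this, the theorem follows. A finite group admits a representation without fixed points over $\Bbbk$ if and only if it does over $\C$, so Theorem~\ref{T:nonsolvgrps} applies in the nonsolvable case and, together with Theorem~\ref{T:solvgrps}, shows $G$ is of one of the types I--VI. The irreducible $\Bbbk G$-modules without fixed points are precisely the images under the bijection of the ones enumerated in Theorem~\ref{T:irreprs}. Concretely, every matrix in the List of irreducible complex representations without fixed points has entries in $K=\Q(\zeta_e)$ --- the quantities $\tfrac{1}{\sqrt 2}$, $\tfrac{1}{\sqrt 5}$, $i$, $\tfrac{1\pm i}{2}$ and $\varepsilon=e^{2\pi i/5}$ all lie in suitable cyclotomic subfields --- so applying the chosen embedding $K\hookrightarrow\Bbbk$, i.e.\ replacing $e^{2\pi i/e}$ by a fixed primitive $e$-th root of unity $\zeta\in\Bbbk$, produces matrices over $\Bbbk$; these still satisfy the defining relations of $G$, because those relations are polynomial identities with coefficients in $K$, and by the transfer principle the resulting representations are irreducible and without fixed points. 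Thus $\varphi$ is a direct sum of the representations of Theorem~\ref{T:irreprs} read over $\Bbbk$, which is the assertion. The hard part will be exactly the transfer step: one must verify carefully that $\Q(\zeta_e)$ really is a splitting field and that the comparison of eigenvalues respects the condition ``$1$ is not an eigenvalue'', after which the rest is a reprise of the complex argument with complex roots of unity replaced by their counterparts in $\Bbbk$. (No uniqueness is claimed, since, as noted in Section~\ref{S:prelim}, distinct output classes of the classification could a priori give isomorphic quotients over $\Bbbk$.)
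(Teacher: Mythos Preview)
Your proposal is correct and follows essentially the same approach as the paper: both use Brauer's theorem to show that the relevant representations are defined over a cyclotomic subfield of $\overline{\Q}$ common to $\C$ and $\Bbbk$, so that the complex classification transfers verbatim. The paper's proof is a terse two-sentence version of this, whereas you spell out the transfer mechanism (splitting fields, matching eigenvalues, cyclotomicity of the explicit matrix entries) in detail.
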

\begin{proof}
First let us recall some terminology. Let $\varphi\colon G\to GL(V_\Bbbk)$
be a representation of a group $G$ on a finitely dimensional $\Bbbk$-vector
space $V_\Bbbk$. We say that $\varphi$ is defined over a subfield
$L\subseteq\Bbbk$, if $\varphi$ is obtained from a representation 
$\varphi'\colon G\to GL(V_L)$ over $L$ by extension of scalars: $V_\Bbbk=
V_L\otimes\Bbbk$.

Since $\Bbbk$ is algebraically closed field of characteristic $0$, we
may assume that it contains the field $\overline{\Q}$ of algebraic numbers.
By Brower's Theorem (\cite{Serr}, 12.2) any representation of a finite
group $G$ over $\Bbbk$ is actually defined over $\overline{\Q}$ (in fact
over a smaller field). It follows that everything that we need about
classification of groups without fixed points and their linear 
representations and that we have proved over $\C$ holds also over $\Bbbk$.
\end{proof}

We can not yet tell much on IQS over fields of positive characteristic.
The only result we have is the following.
\begin{theorem}
Let $\Bbbk$ be an algebraically closed field of characteristic $0$ or
$p>0$. Let $G$ be a finite group of order not divisible by $p$, $N$
an odd number, and assume that $G$ acts linearly without quasireflections on 
the affine space $\Bbbk^N$. Then $\Bbbk^N/G$ is an isolated singularity
if and only if $G$ is a metacyclic group satisfying the conditions of
Theorem~\ref{T:solvgrps}, type I.
\end{theorem}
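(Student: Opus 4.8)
The plan is to reduce the statement, via Lemma~\ref{L:eigen1}, to a purely group-theoretic question about $G$, and then to exploit the parity of $N$. First I would observe that, since $\chr\Bbbk\nmid|G|$ and $G$ is free from quasireflections (which in particular forces the action to be faithful), Lemma~\ref{L:eigen1} tells us that $\Bbbk^N/G$ is an isolated singularity if and only if $G$ acts without fixed points, i.e.\ $1$ is not an eigenvalue of any nontrivial element of $G$. So it suffices to prove: a group $G$ without fixed points admitting a faithful $N$-dimensional representation, $N$ odd, is exactly a metacyclic group of type~I satisfying the numerical conditions of Theorem~\ref{T:solvgrps}.

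For the forward implication, suppose $G$ is without fixed points. By Theorem~\ref{T:pq} it satisfies all the $pq$-conditions, hence all the $p^2$-conditions, so by Theorem~\ref{T:Sylowsgrps} every Sylow subgroup of $G$ for an odd prime is cyclic and the Sylow $2$-subgroup is either cyclic or a generalized quaternion group $Q2^a$, $a\ge3$. The key step is to rule out the second alternative when $N$ is odd: restricting the given representation $\varphi\colon G\hookrightarrow GL(N,\Bbbk)$ to a subgroup $Q2^a\le G$ yields a representation of $Q2^a$ without fixed points over an algebraically closed field of characteristic $\ne2$ (as $p\nmid|Q2^a|=2^a$); its unique involution $z$ satisfies $z^2=1$ and has no eigenvalue $1$, so $\varphi(z)=-I$, whence $z$ acts as $-1$ on every irreducible constituent, no constituent is $1$-dimensional, and by the characteristic-free form of Lemma~\ref{L:Qgrpreprs} every constituent is $2$-dimensional --- so $N=\dim\varphi$ would be even, a contradiction. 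Therefore all Sylow subgroups of $G$ are cyclic, and by Theorem~5.4.1 of \cite{Wolf} (quoted in Remark~\ref{R:qgroup}) $G$ is a group of type~I, in particular metacyclic, hence solvable; the numerical conditions are then exactly what Theorem~\ref{T:solvgrps} yields for the solvable group $G$ together with its $pq$-conditions. (Equivalently, $G$ is one of the types I--IV of Theorem~\ref{T:solvgrps}, and by Remark~\ref{R:qgroup} types II--IV each contain a generalized quaternion subgroup, contradicting the cyclicity of the Sylow $2$-subgroup just established.)

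For the converse, a metacyclic group of type~I satisfying those conditions carries faithful representations without fixed points --- the representations $\pi_{k,l}$ of Theorem~\ref{T:irreprs} and their direct sums --- and by Lemma~\ref{L:eigen1} every such representation realises $G$ as an isolated quotient singularity, so the hypotheses of the theorem are indeed met (in the odd-dimensional case $d$ is forced to be odd by the forward direction, and $\pi_{k,l}$ has dimension $d$). Over $\C$ all of this is already contained in Theorem~\ref{T:classisqsing}; the passage to characteristic $p\nmid|G|$ is the same as for the theorems immediately preceding, namely that by Brauer's theorem the whole classification of groups without fixed points and of their representations without fixed points is defined over $\overline{\Q}$, so it transports verbatim to any algebraically closed field of characteristic not dividing $|G|$.

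I expect the only genuine work to be the key step of the forward direction --- showing that an odd-dimensional fixed-point-free representation cannot have a generalized quaternion Sylow $2$-subgroup. This reduces to the fact that $Q2^a$ has no fixed-point-free representation of odd dimension, which follows from the structure of its irreducible representations; the one technical obligation is to check that Lemma~\ref{L:Qgrpreprs}, stated over $\C$, remains valid over every algebraically closed field of characteristic $\ne2$, which is routine since for a $2$-group in characteristic prime to its order the ordinary and modular representation theories coincide.
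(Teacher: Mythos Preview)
Your proposal is correct and takes essentially the same approach as the paper: reduce to groups without fixed points via Lemma~\ref{L:eigen1}, invoke Theorem~\ref{T:Sylowsgrps} for the Sylow structure, rule out a generalized quaternion Sylow $2$-subgroup by the parity of $N$ (the paper cites \cite{CR}, \S47 directly for the $2$-dimensionality of faithful irreducibles of $Q2^a$, where you argue via the central involution and Lemma~\ref{L:Qgrpreprs}), and conclude type~I via \cite{Wolf}, Theorem~5.4.1 as in Remark~\ref{R:qgroup}. The paper's proof is terser and does not spell out the converse direction or the Brauer-theorem transport to positive characteristic, but those additions of yours are sound and fill in what the paper leaves implicit.
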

\begin{proof}
It follows from the conditions of the theorem that $G$ acts on $\Bbbk^N$
without fixed points, and thus its Sylow subgroups are either cyclic
or generalized quaternion (Theorem~\ref{T:Sylowsgrps}). But every exact
$\Bbbk$-linear irreducible representation of a generalized quaternion 
group is $2$-dimensional (\cite{CR}, \S 47). If $Q2^a<G$ is such a subgroup,
then $\varphi$ restricted to $Q2^a$ is an exact representation, but then
$N$ must be even. This shows that every Sylow subgroup of $G$ is cyclic.
Such groups are classified in Theorem~\ref{T:solvgrps}, type I (see
also \cite{Wolf}, Theorem~5.4.1), all of them are metacyclic.
\end{proof}
\begin{remark}
Not all irreducible representations of metacyclic groups have the form
given in our List above, type I (where one has to take roots of unity of 
the field $\Bbbk$ instead of complex roots), see \cite{CR}, \S 47. An 
example of a group of type I satisfying all the $pq$-conditions but 
admitting irreducible representations different from type I of the List
is given by $m=7\cdot 19$, $n=27$, $r=4$, so that $d=9$.
\end{remark}

On the other hand, representations of metacyclic groups over $\Bbbk$,
$\chr\Bbbk\nmid |G|$, have been completely described in the literature
(\cite{Tucker}), and it should be possible to extract a classification
of representations without fixed points from there.

\section{Gorenstein isolated quotient singularities}\label{S:Gorenstein}
In this section we work over the field $\C$ of complex numbers. First
let us deduce Theorem~\ref{T:KN} of Kurano and Nishi from the classification
of IQS over the field $\C$. 

\emph{Proof of Theorem~\ref{T:KN}.} Let $p$ be an odd prime, and let
$G$ be a finite group acting on $\C^p$ via a representation $\varphi\colon
G\to GL(p,\C)$. We may assume that $G$ acts without quasireflections.
If $\C^p/\varphi(G)$ is a Gorenstein isolated singularity, by 
Theorem~\ref{T:classisqsing} $G$ is one of the groups of types I -- VI,
$\varphi$ is a direct sum of irreducible representations of dimension $d$,
$2d$, or $4d$ from Theorem~\ref{T:irreprs}, and $\varphi(G)\subset 
SL(p,\C)$ by Theorem~\ref{T:Watanabe}. Since $p$ is odd, $G$ has type I. 
Thus either $d=1$ and $G$ is cyclic, or $d=p$. But it follows from
Theorem~\ref{T:irreprs}, type I, that the last case is impossible.
\begin{flushright}{$\Box$}\end{flushright}

If we want to classify all Gorenstein IQS over the field $\C$, we have
only to refine the classification of Theorem~\ref{T:classisqsing} by
taking into account the determinants of representations.
\begin{theorem}\label{T:classgiqsing}
Let $X=\C^N/G'$ be a Gorenstein isolated quotient singularity, where
$G'$ is a finite subgroup of $\C^N$. Then the variety $X$ is isomorphic
to the quotient $\C^N/\varphi(G)$, where $G$ is one of the groups
described in Theorem~\ref{T:solvgrps} and Theorem~\ref{T:nonsolvgrps},
and $\varphi$ is a direct sum $\varphi=\varphi_1\oplus\dots\oplus
\varphi_s$, $s\geq 1$, of irreducible representations described in 
Theorem~\ref{T:irreprs} and satisfying the additional condition
$$\det(\varphi_1(B))\cdot\det(\varphi_2(B))\cdots\det(\varphi_s(B))=1\,,$$
where the values $\det(\varphi_i(B))$, $1\leq i\leq s$, are given in the 
5th column of Table~\ref{Tb:reprs}. More explicitely this condition is 
stated in Table~\ref{Tb:gorcond}, where we use the same notation as in 
Table~\ref{Tb:reprs}. The representations $\varphi_i$ have one 
and the same dimension $d$, $2d$, or $4d$, $d|N$ ($2d|N$, $4d|N$). 
\end{theorem}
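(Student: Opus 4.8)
The plan is to reduce Theorem~\ref{T:classgiqsing} to the three tools already assembled: the classification of IQS over $\C$ (Theorem~\ref{T:classisqsing}), the Watanabe criterion (Theorem~\ref{T:Watanabe}), and the determinant computations collected in the fifth column of Table~\ref{Tb:reprs}. First I would normalize the problem: given a Gorenstein IQS $X=\C^N/G'$, we may assume $G'$ acts without quasireflections, since the subgroup generated by quasireflections is normal and quotienting by it does not change the quotient variety (Chevalley--Shephard--Todd, as recalled in Section~\ref{S:prelim}). Then Theorem~\ref{T:classisqsing} gives an analytic (hence, for affine cones over projective links, algebraic) isomorphism $X\simeq\C^N/\varphi(G)$ where $G$ is one of the groups of types I--VI and $\varphi=\varphi_1\oplus\cdots\oplus\varphi_s$ is a direct sum of the irreducible representations without fixed points listed in Theorem~\ref{T:irreprs}. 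Since all these irreducibles for a fixed $G$ have a common dimension $d$, $2d$, or $4d$, the claim $d\mid N$ (resp.\ $2d\mid N$, $4d\mid N$) is immediate.

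Next I would apply Watanabe's theorem: because $\varphi(G)$ is free from quasireflections, $X=\C^N/\varphi(G)$ is Gorenstein if and only if $\varphi(G)\subset SL(N,\C)$, i.e.\ $\det\varphi(g)=1$ for every $g\in G$. The key reduction is that it suffices to check this on generators, and by Theorem~\ref{T:irreprs} every generator of $G$ other than $B$ (or the relevant power of $B$ recorded in Table~\ref{Tb:reprs}) already has determinant $1$ in each irreducible $\varphi_i$, hence in $\varphi$. Therefore $\varphi(G)\subset SL(N,\C)$ is equivalent to the single scalar equation
$$\det(\varphi_1(B))\cdot\det(\varphi_2(B))\cdots\det(\varphi_s(B))=1\,,$$
using multiplicativity of determinant over the block-diagonal direct sum. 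For each individual factor $\det(\varphi_i(B))$ I would simply quote the value from the fifth column of Table~\ref{Tb:reprs}, which was established in the proof of Theorem~\ref{T:irreprs}.

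The remaining work is bookkeeping: translating the product condition, type by type, into the explicit arithmetic statements of Table~\ref{Tb:gorcond}. For type I the factor is $(-1)^{d-1}e^{2\pi il_i/n'}$, so the product is $(-1)^{s(d-1)}e^{2\pi i(\sum l_i)/n'}$, and one reads off congruence conditions on $\sum l_i$ modulo $n'$ together with a parity condition depending on $s$ and $d$; the cyclic subcase $d=1$ gives $\sum l_i\equiv 0\pmod n$. For types II, IV (the $\gamma$ and $\varkappa$ cases), V, VI the factor already has the form $e^{2\pi i(\cdots)l_i'/n'}$ and the analysis is parallel, while for types III and IV in the cases marked ``never'' in Table~\ref{Tb:reprs} the relevant determinant is a nontrivial root of unity independent of the free parameters, so no sum of such factors can equal $1$ and those types contribute nothing — this matches the entries in Table~\ref{Tb:gorcond}. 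The converse direction is automatic: any $\varphi$ of the stated form with the product of $B$-determinants equal to $1$ has image in $SL(N,\C)$, is free from quasireflections (being a sum of representations without fixed points of a group without fixed points), and hence yields a Gorenstein IQS by Theorems~\ref{T:Watanabe} and~\ref{T:classisqsing}.

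The main obstacle is purely organizational rather than conceptual: assembling Table~\ref{Tb:gorcond} requires carefully case-splitting on the type of $G$ and, within each type, on the subcases of Table~\ref{Tb:reprs}, keeping track of which power of $B$ the tabulated determinant refers to (e.g.\ $B^3$, $B^{3^v}$, or $B^{n''}$ in the type III and IV entries) and re-expressing the resulting congruence in terms of the original modulus $n$ or $n'$. One must also be attentive to the parity factor $(-1)^{s(d-1)}$ in type I, which is the only place the number of summands $s$ enters nontrivially, and to the fact that in several type IV and type VI rows the condition $G<SL$ already forces $d=1$ and $A=1$, so the tables simplify considerably. No genuinely new mathematics is needed beyond what is already proved in Theorems~\ref{T:Watanabe}, \ref{T:irreprs}, and~\ref{T:classisqsing}.
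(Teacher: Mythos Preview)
Your overall strategy is exactly the paper's: invoke Theorem~\ref{T:classisqsing} to put the singularity in standard form, apply Watanabe's criterion (Theorem~\ref{T:Watanabe}) to reduce Gorensteinness to $\varphi(G)\subset SL(N,\C)$, observe from Theorem~\ref{T:irreprs} that only the generator $B$ (or the indicated power of $B$) can have nontrivial determinant, and then read off the product condition row by row from column~5 of Table~\ref{Tb:reprs}. The paper's proof is terse and does precisely this, working out the Type~I and the Type~III ($9\nmid n$) rows of Table~\ref{Tb:gorcond} as illustrations and leaving the rest to the reader.

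There is, however, one concrete error in your sketch. You write that for the Type~III and Type~IV cases marked ``never'' in Table~\ref{Tb:reprs} the determinant is a nontrivial root of unity independent of the free parameters, so these types ``contribute nothing'' to Table~\ref{Tb:gorcond}. This is not so: the ``never'' entry in Table~\ref{Tb:reprs} refers only to a \emph{single} irreducible summand landing in $SL$. The determinants in those rows do depend on the parameters $l$ (or $l'$, $j$), and a direct sum of several such irreducibles can perfectly well have total determinant~$1$. For instance, in Type~III with $3\mid d$ one has $\det\mu_{k_i,l_i}(B)=e^{4\pi i l_i/n'}$, and since $n'$ is odd the product over $i$ equals $1$ precisely when $\sum_i l_i\equiv 0\pmod{n'}$; this is exactly the entry that appears in Table~\ref{Tb:gorcond}. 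Similarly the rows for Type~III with $9\mid n$, $3\nmid d$ and for Type~IV with $3\mid d$ are present in Table~\ref{Tb:gorcond} with genuine congruence conditions. The same caution applies to your closing remark that ``the condition $G<SL$ already forces $d=1$ and $A=1$'' in several Type~IV and Type~VI rows: that is a constraint on an individual irreducible, not on a direct sum, and Table~\ref{Tb:gorcond} records conditions valid for arbitrary $d$. Once you correct this misreading, the rest of your bookkeeping plan goes through unchanged.
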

\begin{table}[hbt]
\begin{center}
   \begin{tabular}{|c|p{3cm}|c|p{4.8cm}|}
   \hline
   Type & Case & Representation & Condition  \\ \hline
   I & \phantom{0} & $\varphi_i=\pi_{k_i,l_i}$ 
   & $2\sum\limits_{i=1}^{s} l_i\equiv sn'(d-1)(\mod 2n')$ \\ \hline
   II & \phantom{0} & $\varphi_i=\alpha_{k_i',l_i'}$ 
   & $(k+1)\sum\limits_{i=1}^{s} l_i'\equiv 0(\mod n')$ \\ \hline
   III & $9\nmid n$ & $\varphi_i=\nu_{k_i,l_i}$ 
   & $\sum\limits_{i=1}^{s} l_i\equiv 0(\mod n'/3)$ \\ \hline
   III & $9|n$, $3\nmid d$ & $\varphi_i=\nu_{k_i,l_i,j_i}$ 
   & $\sum\limits_{i=1}^{s} j_i\equiv 0(\mod 3^v)$, \newline
   $\sum\limits_{i=1}^{s} l_i\equiv 0(\mod n'/3^v)$ \\ \hline
   III & $3|d$ & $\varphi_i=\mu_{k_i,l_i}$ 
   & $\sum\limits_{i=1}^{s} l_i\equiv 0(\mod n')$ \\ \hline
   IV & $9\nmid n$, \newline $G=\langle A,B^3\rangle\times O^*$ 
   & $\varphi_i=\psi_{k_i,l_i,j_i}$ 
   & $\sum\limits_{i=1}^{s} l_i \equiv 0(\mod n'/3)$ \\ \hline
   IV & $9\nmid n$, \newline $G\ne\langle A,B^3\rangle\times O^*$ 
   & $\varphi_i=\gamma_{k_i',l_i'}$ 
   & $(k+1)\sum\limits_{i=1}^{s} l_i' \equiv 0(\mod n'/3)$ \\ \hline
   IV & $9|n$, $3\nmid d$, \newline $G=\langle A,B^{3^v}\rangle\times 
   O_{v}^{*}$ & $\varphi_i=\xi_{k_i,l_i,j_i}$ 
   & $\sum\limits_{i=1}^{s} l_i \equiv 0(\mod n''/d)$ \\ \hline
   IV & $9|n$, $3\nmid d$, \newline $G\ne\langle A,B^{3^v}\rangle\times 
   O_{v}^{*}$ & $\varphi_i=\gamma_{k_i',l_i',j}$ 
   & $(k+1)\sum\limits_{i=1}^{s} l_i' \equiv 0(\mod n''/d)$ \\ \hline
   IV & $3|d$ & $\varphi_i=\eta_{k_i',l_i'}$ 
   & $(k+1)\sum\limits_{i=1}^{s} l_i' \equiv 0(\mod n')$ \\ \hline
   V & \phantom{0} & $\varphi_i=\iota_{k_i,l_i,j_i}$ 
   & $\sum\limits_{i=1}^{s} l_i \equiv 0(\mod n')$ \\ \hline
   VI & \phantom{0} & $\varphi_i=\varkappa_{k_i',l_i',j}$ 
   & $(k+1)\sum\limits_{i=1}^{s} l_i' \equiv 0(\mod n')$ \\
   \hline
   \end{tabular}
\end{center}
\caption{When the image of a representation is contained in $SL(N,\C)$}
\label{Tb:gorcond}
\end{table}
\begin{proof}
The theorem is a direct consequence of Theorems~\ref{T:Watanabe},
\ref{T:irreprs}, \ref{T:classisqsing}, and conditions on the parameters $n$,
$d$, etc. Fore instance, assume that $G$ is a group of type I, and
$\varphi$ is a direct sum of $s$ $d$-dimensional irreducible representations
$\pi_{k_j,l_j}$ described in the List of irreducible representations without
fixed points, $1\leq j\leq s$, $s=N/d$. From Table~\ref{Tb:reprs} we get 
the value $|\pi_{k_j,l_j}(B)|=(-1)^(d-1) e^{2\pi il_j/n'}$, $n'=n/d$. It
follows that
$$|\varphi(B)|=\prod_{j=1}^{s}(-1)^(d-1) e^{2\pi il_j/n'}=
(-1)^{s(d-1)}\exp\left({\frac{2\pi i}{n'}\sum_{j=1}^{s} l_j}\right)\,.$$
This number is equal to $1$ if and only if $(1/n')\sum l_j-(sn'(d-1))/2$ 
is integer, which is equivalent to the condition in the first row, the 
fourth column of Table~\ref{Tb:gorcond}.

Let us consider also the condition in the third row of 
Table~\ref{Tb:gorcond}. It follows from Table~\ref{Tb:reprs} that
$\varphi(B)$ has determinant $1$ if and only if 
$$6\sum_{i=1}^{s} l_i\equiv 0(\mod n')\,.$$
But recall that here $n$ is odd, divisible by $3$, but not divisible by $9$, 
and $d$ is not divisible by $3$. Thus $n'$ is odd and divisible by $3$. 
Hence we can rewrite the condition in the form given in 
Table~\ref{Tb:gorcond}. The rest of Table~\ref{Tb:gorcond} can be checked 
in the same way.
\end{proof}
\begin{remark}
By Lemma~\ref{L:conjgrps} two Gorenstein IQS $\C^N/\varphi(G)$ and 
$\C^N/\psi(G)$ are isomorphic if and only if $\varphi(G)$ and $\psi(G)$ are
conjugate subgroups of $SL(N,\C)$. One can get explicite conditions from
Theorem~\ref{T:aut}. Also note that since the numbers $n''$ and $3^v$ are 
coprime, two conditions of row III, Case $9|n$, $3\nmid d$ of 
Table~\ref{Tb:gorcond} can actually be reduced to one.
\end{remark}

To illustrate Theorem~\ref{T:classgiqsing}, let us describe complex 
Gorenstein IQS in dimensions $N$, $2\leq N\leq 7$.

$N=2$. Here we have to classify up to conjugacy finite subgroups of
$SL(2,\C)$. This case is classical, and it is well known that all such
groups are either cyclic (they belong to type I in Wolf's terminology),
binary dihedral (type I and II), binary tetrahedral (type III),
binary octahedral (type IV), or binary icosahedral (type V). The
corresponding quotients $\C^2/G$ are the Kleinian singularities $A_n$,
$D_n$, $E_6$, $E_7$, $E_8$.

$N=3$. By Theorem~\ref{T:KN} of Kurano and Nishi, all Gorenstein IQS of 
dimension $3$ are cyclic, i.~e., they are isomorphic to quotients 
$\C^3/(\Z/n)$, $n\geq 2$, where the cyclic group is generated by the matrix
$$\begin{pmatrix}
e^{2\pi il_1/n} & 0 & 0 \\
0 & e^{2\pi il_2/n} & 0 \\
0 & 0 & e^{2\pi il_3/n}
\end{pmatrix},
$$
and $(l_1,n)=(l_2,n)=(l_3,n)=1$, $l_1+l_2+l_3\equiv 0(\mod n)$ (in fact
it follows from this that $n$ must be odd).

$N=4$. Let $\C^4/\varphi(G)$ be a $4$-dimensional Gorenstein isolated
quotient singularity, as described in Theorem~\ref{T:classgiqsing}.
The representation $\varphi$ is a direct sum of irreducible representations
of dimension $q$, $q=1$, $2$, or $4$. The case $q=1$ corresponds to
cyclic IQS analogous to case $N=3$. Now let $q=2$. If $G$ is a group
of type I, then $\varphi=\pi_{k_1,l_1}\oplus\pi_{k_2,l_2}$, the 
corresponding condition of Table~\ref{Tb:gorcond} takes the form 
$l_1+l_2\equiv 0(\mod n/2)$, and $n$ must be divisible by $4$. The
corresponding singularities can be described in the following way.
Consider all ordered collections $(m,n,r,k_1,l_1,k_2,l_2)$ of $7$ positive
integers satisfying the conditions $4|n$, $r^2\equiv 1(\mod n)$, 
$((r-1)n,m)=1$, $k_i$, $l_i$ are defined modulo and coprime to $m$ and $n$ 
respectively, $i=1,2$, and $l_1+l_2\equiv 0(\mod n/2)$. Consider a group 
$G=G(m,n,r)$ of type I defined in Table~\ref{Tb:solvgrps} and its 
irreducible $2$-dimensional representations $\pi_{k_i,l_i}$, $i=1,2$,
defined in the List of irreducible representations without fixed points
(simply the List in the sequel).
Then for any such collection we have a Gorenstein isolated quotient
singularity
$$X_I(m,n,r,k_1,l_1,k_2,l_2)=
\C^4/\pi_{k_1,l_1}(G)\oplus\pi_{k_2,l_2}(G)\,.$$

Let $G$ be a group of type II. Then $\varphi=\alpha_{k_1',l_1'}\oplus
\alpha_{k_2',l_2'}$. In order to get a $2$-dimensional irreducible 
representation of it, we have to take $d=1$, thus also $A=1$ (see 
Table~\ref{Tb:solvgrps}). The corresponding condition of 
Table~\ref{Tb:gorcond} takes the form $(k+1)(l_1'+l_2')\equiv 
0(\mod n)$. The corresponding singularities can be described in the
following way. Consider all ordered collections $(n,k,l_1',l_2')$
of $4$ positive integers such that $n=2^u v$, $u\geq 2$, $k\equiv -1(2^u)$,
$k^2\equiv 1(n)$, $(l_1,n)=(l_2,n)=1$ are defined modulo $n$,
$(k+1)(l_1'+l_2')\equiv 0(n)$. Consider a group $G=G(m=1,n,r=1,l=1,k)$ 
of type II defined in Table~\ref{Tb:solvgrps} and its irreducible 
representations $\alpha_{k_i'=1,l_i'}$, $i=1,2$, defined 
in the List. Then for any such collection we have a Gorenstein isolated 
quotient singularity
$$X_{II}(n,k,l_1',l_2')=\C^4/\alpha_{1,l_1'}(G)\oplus\alpha_{1,l_2'}(G)\,.$$

Let $G$ be a group of type III. From the condition $q=2$ we again
conclude that $d=1$ and $A=1$. We have $3$ possibilities for the 
group $G$. If $9\nmid n$, then $\varphi=\nu_{k_1,l_1}\oplus\nu_{k_2,l_2}$.
The corresponding condition of Table~\ref{Tb:gorcond} takes the form
$l_1+l_2\equiv 0(n/3)$. The corresponding singularities can be
described in the following way. Consider all triples $(n,l_1,l_2)$
of positive integers such that $n$ is odd, $3|n$ but $9\nmid n$, $l_1$ and 
$l_2$ are defined modulo and coprime to $n$, and $l_1+l_2\equiv 0(n/3)$.
Consider a group $G=G(m=1,n,r=1)$ of type III defined in 
Table~\ref{Tb:solvgrps} and its irreducible representations
$\nu_{k_i=1,l_i}$, $i=1,2$, defined in the List. Then for any
such triple we have a Gorenstein isolated quotient singularity
$$X_{III}^{(1)}(n,l_1,l_2)=\C^4/\nu_{1,l_1}(G)\oplus\nu_{1,l_2}(G)\,.$$

If $9|n$, but $3\nmid d$, then $\varphi=\nu_{k_1,l_1,j_1}\oplus
\nu_{k_2,l_2,j_2}$. Here we get the following description of the
corresponding singularities. Consider all ordered collections
$(n,l_1,l_2,j_1,j_2)$ of $5$ positive integers satisfying the conditions
$n=3^v n''$, $v\geq 2$, $(l_i,n)=1$ are defined modulo $n$,
$(j_i,3)=1$ are defined modulo $3^v$, $i=1,2$, $l_1+l_2\equiv
0(n/3^v)$, $j_1+j_2\equiv 0(3^v)$. Consider a group $G=G(m=1,n,r=1)$
of type III defined in Table~\ref{Tb:solvgrps} and its irreducible
representations $\nu_{k_i=1,l_i,j_i}$, $i=1,2$, defined in the List. 
Then for any such collection we have a Gorenstein isolated quotient 
singularity
$$X_{III}^{(2)}(n,l_1,l_2,j_1,j_2)=\C^4/\nu_{1,l_1,j_1}(G)\oplus
\nu_{1,l_2,j_2}(G)\,.$$

The third possibility $3|d$ obviously does not realize here.

Assume that $G$ is a group of type IV. $2$-dimensional irreducible
representations exist only in the case $G\simeq O^*\times
\langle A,B^3\rangle$. Again there must be $d=1$ and $A=1$. We deduce
the following description of the corresponding singularities. Consider
all ordered collections $(n,k,l_1,l_2,j_1,j_2)$ of $6$ positive integers 
such that $n$ is odd, $3|n$ but $9\nmid n$, $k$ is defined modulo $n$, 
$k\equiv -1(3)$, $k^2\equiv 1(n)$, $l_i$ are defined modulo $n$, 
$(l_i,n)=1$, $j_i=\pm 1$, $i=1,2$, and $l_1+l_2\equiv 0(n/3)$. Consider a 
group $G=G(m=1,n,r=1,l=1,k)$ of type IV defined in Table~\ref{Tb:solvgrps} 
and its irreducible representations $\psi_{k_i=1,l_i,j_i}$, $i=1,2$, defined 
in the List. Then for any such collection we have a Gorenstein isolated 
quotient singularity
$$X_{IV}(n,k,l_1,l_2,j_1,j_2)=\C^4/\psi_{1,l_1,j_1}(G)\oplus
\psi_{1,l_2,j_2}(G)\,.$$

Assume that $G$ is a group of type V. We have $2$-dimensional irreducible
representations only in the case $d=1$, $A=1$. The corresponding
singularities can be described in the following way. Consider all ordered
collections $(n,l_1,l_2,j_1,j_2)$ of $5$ positive integers such that 
$(n,30)=1$, $(l_i,n)=1$ are defined modulo $n$, $j_i=\pm 1$, $i=1,2$, and 
$l_1+l_2\equiv 0(n)$. Consider a group $G=G(m=1,n,r=1)$ of type V defined
in Theorem~\ref{T:nonsolvgrps} and its irreducible representations
$\iota_{k_i=1,l_i,j_i}$, $i=1,2$, defined in the List. Then for any such
collection we have a Gorenstein isolated quotient singularity 
$$X_V(n,l_1,l_2,j_1,j_2)=\C^4/\iota_{1,l_1,j_1}(G)\oplus
\iota_{1,l_2,j_2}(G)\,.$$

There are no $2$-dimensional irreducible representations of groups 
of type VI.

Now let $q=4$, i.~e., the group $G$ acts on $\C^4$ via an irreducible
representation $\varphi$. Irreducible representations of groups of
types I -- VI with determinants $1$ are described in 
Theorem~\ref{T:irreprs} and Table~\ref{Tb:reprs}. We obtain the following
singularities. Let $G$ be a group of type I. Consider all triples $(m,r,k)$
of positive integers such that $m$ is odd, $r^4\equiv 1(m)$, $(r-1,m)=1$,
and $k$ is defined modulo $m$, $(k,m)=1$. Consider a group $G=G(m,n=8,r)$ 
of type I defined in Table~\ref{Tb:solvgrps} and its irreducible 
representation $\pi_{k,l=2}$ defined in the List. Then for any such triple 
we have a Gorenstein isolated quotient singularity
$$X_I(m,r,k)=\C^4/\pi_{k,2}(G)\,.$$

Let $G$ be a group of type II. Consider all ordered collections 
$(m,n,r,l,k',l')$ of $6$ positive integers such that $(m,n,r,l)$ satisfy 
the conditions of Table~\ref{Tb:solvgrps} Type II with $d=2$, $k=-1$, $k'$ 
is defined modulo $m$, $l'$ is defined modulo $n$, $(k',m)=(l',n)=1$. 
Consider a group $G=G(m,n,r,l,k=-1)$ of type II defined in 
Table~\ref{Tb:solvgrps} and its irreducible representation $\alpha_{k',l'}$ 
defined in the List. Then for any such collection we have a Gorenstein 
isolated quotient singularity
$$X_{II}(m,n,r,l,k',l')=\C^4/\alpha_{k',l'}(G)\,.$$

There are no irreducible representations of dimension $4$ with determinant
$1$ of groups of type III.

Let $G$ be a group of type IV. We get the following singularities. Consider
a pair $(n,l')$ of positive integers such that $n$ is odd, $3|n$ but
$9\nmid n$, $(l',n)=1$ is defined modulo $n$. Consider a group $G=
G(m=1,n,r=1,l=1,k=-1)$, $G\ne\langle A,B^3\rangle\times O^*$, of type IV 
defined in Table~\ref{Tb:solvgrps} and in the List, and its irreducible
representation $\gamma_{k'=1,l'}$ defined in the List. Then for any such
pair we have a Gorenstein isolated quotient singularity
$$X_{IV}^{(1)}(n,l')=\C^4/\gamma_{1,l'}(G)\,.$$

For any $v>1$ and $k$, $1\leq k<3^v$, $k\equiv 1(3)$, we have a Gorenstein
isolated quotient singularity
$$X_{IV}^{(2)}(v,k)=\C^4/o_k(O_{v}^{*})\,,$$
where $O_{v}^{*}$ is the generalized octahedral group and $o_k$ is its
irreducible representation defined in Lemma~\ref{L:Ogrpreprs}.

Consider all ordered collections $(n,k,l',j)$ of $4$ positive integers
such that the pair $(n,k)$ satisfies the conditions of 
Table~\ref{Tb:solvgrps}, Type IV with $m=1$, $r=1$, $l=1$, $n=3^v n''$,
$v\geq 2$, $l'$ is defined modulo $n$, $(l',n)=1$, $1\leq j<3^v$,
$j\equiv 1(3)$. Consider a group $G=G(m=1,n,r=1,l=1,k)$, $G\ne
\langle A,B^{3^v}\rangle\times O_{v}^{*}$, of type IV defined in 
Table~\ref{Tb:solvgrps} and in the List, and its irreducible representation 
$\gamma_{k'=1,l',j}$ defined in the List. Then for any such collection we
have a Gorenstein isolated quotient singularity
$$X_{IV}^{(3)}(n,k,l',j)=\C^4/\gamma_{1,l',j}(G)\,.$$

There are no irreducible representations of dimension $4$ with determinant
$1$ of groups of type V.

Let $G$ be a group of type VI. Consider all triples $(n,l',j)$ satisfying
the conditions $(n,30)=1$, $l'$ is defined modulo $n$, $(l',n)=1$,
$j=\pm 1$. Consider a group $G=G(m=1,n,r=1,l=1,k=-1)$ of type VI defined
in Theorem~\ref{T:nonsolvgrps} and its irreducible representation
$\varkappa_{k'=1,l',j}$ defined in the List. Then for any such triple
we have a Gorenstein isolated quotient singularity
$$X_{VI}(n,l',j)=\C^4/\varkappa_{1,l',j}(G)\,.$$

$N=5$. By Theorem~\ref{T:KN}, in this case we have only cyclic
quotient singularities $\C^5/(\Z/n)$, where the group is generated by a
diagonal matrix with $e^{2\pi il_j/n}$, $(l_j,n)=1$, $j=1,\dots,5$, on the 
diagonal, $l_1+\dots+l_5\equiv 0(\mod n)$.

$N=6$. Again we have to consider all divisors $q$ of $6$. If $q=1$,
we get cyclic quotient singularities. Let $q=2$. In this case we get
singularities of the form $\C^6/\varphi(G)$, where $G$ is a group of
one of the types I -- VI, and $\varphi$ is a direct sum of $3$
$2$-dimensional irreducible representations of $G$ without fixed points,
satisfying the Gorenstein condition of Table~\ref{Tb:gorcond}. For
example, let $G$ be a group of type I. Consider all ordered collections
$(m,n,r,k_1,l_1,k_2,l_2,k_3,l_3)$ of $9$ positive integers, where
$(m,n,r)$ satisfy the conditions of Table~\ref{Tb:solvgrps}, type I,
$r^2\equiv 1(m)$, $k_i$ are defined modulo $m$, $(k_i,m)=1$, $l_i$ are
defined modulo $n$, $(l_i,n)=1$, and $2(l_1+l_2+l_3)\equiv 3(n/2)(\mod n)$.
It is not difficult to see that the last condition is equivalent to
the following: $n/4$ is odd and divides $l_1+l_2+l_3$. Consider a group
$G=G(m,n,r)$ of type I defined in Table~\ref{Tb:solvgrps} and its
irreducible representations $\pi_{k_i,l_i}$, $i=1,2,3$, defined in the
List. Then for any such collection we have a Gorenstein isolated
singularity
$$X_I(m,n,r,k_1,l_1,k_2,l_2,k_3,l_3)=\C^6/\pi_{k_1,l_1}(G)\oplus
\pi_{k_2,l_2}(G)\oplus\pi_{k_3,l_3}(G)\,.$$
We leave to an interested reader to state a more precise description of
the singularities corresponding to types II -- V. Note that groups
of type VI does not give any singularities in this case.

Assume that $q=3$. In this case all singularities are produced by
groups of type I, since only they have irreducible representations
without fixed points of odd dimension. Consider all ordered collections
$(m,n,r,k_1,l_1,k_2,l_2)$ of $7$ positive integers, where $(m,n,r)$
satisfy conditions of Table~\ref{Tb:solvgrps}, type I, $r^3\equiv 1(m)$,
$k_i$ are defined modulo $m$, $(k_i,m)=1$, $l_i$ are defined modulo $n$,
$(l_i,n)=1$, $i=1,2$, and $l_1+l_2\equiv 0(n/3)$. Consider a group
$G=G(m,n,r)$ of type I defined in Table~\ref{Tb:solvgrps} and its
irreducible representations $\pi_{k_i,l_i}$, $i=1,2$, defined in the List.
Then for any such collection we have a Gorenstein isolated quotient 
singularity
$$X_I(m,n,r,k_1,l_1,k_2,l_2)=\C^6/\pi_{k_1,l_1}(G)\oplus
\pi_{k_2,l_2}(G)\,.$$

There are no irreducible representations without fixed points of
dimension $6$ of groups of types I -- VI.

$N=7$. By Theorem~\ref{T:KN}, there are only cyclic Gorenstein IQS in
dimension $7$.

\end{document}